\newtheorem{theo}{Theorem}[section]
\newtheorem{prop}[theo]{Proposition}
\newtheorem{lemma}[theo]{Lemma}
\newtheorem{defn}[theo]{Definition}
\newtheorem{rem}[theo]{Remark}
\newtheorem{ex}[theo]{Example}
\theoremstyle{definition}
\newtheorem{example}[theo]{Example}
\def\diaCrossP{\unitlength.08em
  \begin{minipage}{15\unitlength}
    \begin{picture}(15,15)
      \put(0,0){\vector(1,1){15}}
      \qbezier(15,0)(15,0)(10,5)
      \qbezier(5,10)(0,15)(0,15)
      \put(0,15){\vector(-1,1){0}}
    \end{picture}
  \end{minipage}
}
\def\diaCrossN{\unitlength.08em
  \begin{minipage}{15\unitlength}
    \begin{picture}(15,15)
      \put(15,0){\vector(-1,1){15}}
      \qbezier(0,0)(0,0)(5,5)
      \qbezier(10,10)(15,15)(15,15)
      \put(15,15){\vector(1,1){0}}
    \end{picture}
  \end{minipage}
}
\def\diaCircle{\unitlength.1em
  \begin{minipage}{15\unitlength}
    \begin{picture}(15,15)
      \put(7.5,10){\circle{8}}
      \put(7.5,0){\vector(0,1){5}}
       \put(7.5,5){\line(0,1){5.5}}
    \end{picture}
  \end{minipage}
}
\def\Ptwist{\unitlength.1em
  \begin{minipage}{15\unitlength}
    \begin{picture}(15,15)
     \put(0,7.5){\line(1,0){6}}
     \put(9,7.5){\line(1,0){6}}
      \put(4,9.5){\line(1,0){3}}
      \qbezier(7,9.5)(7,9.5)(9,5.5)
      \put(9, 5.5){\line(1,0){3}}
    \end{picture}
  \end{minipage}
}
\def\Ntwist{\unitlength.1em
  \begin{minipage}{15\unitlength}
    \begin{picture}(15,15)
     \put(0,7.5){\line(1,0){6}}
     \put(9,7.5){\line(1,0){6}}
      \put(4,5.5){\line(1,0){3}}
      \qbezier(7,5.5)(7,5.5)(9,9.5)
      \put(9, 9.5){\line(1,0){3}}
    \end{picture}
  \end{minipage}
}
\tikzset{->-/.style={decoration={
  markings,
  mark=at position .5 with {\arrow{>}}},postaction={decorate}}}
\tikzset{-<-/.style={decoration={
  markings,
  mark=at position .5 with {\arrow{<}}},postaction={decorate}}}
\begin{document}
\title{A multi-variable Alexander polynomial for a framed transverse graph}
\author{Yuanyuan Bao and Zhongtao Wu}
\address{Division of Mathematics \&
Research Center for Pure and Applied Mathematics,
Graduate School of Information Sciences,
Tohoku University, 6-3-09 Aramaki-Aza-Aoba, Aoba-ku, Sendai 980-8579, Japan}
\email{yybao@tohoku.ac.jp}
 \address{
Department of Mathematics, The Chinese University of Hong Kong, Shatin, Hong Kong
}
\email{ztwu@math.cuhk.edu.hk}

\begin{abstract}
We propose a definition of the rotation number for transverse graph diagrams, extending the classical notion of the rotation number for plane curves. Using this, we introduce a normalized multi-variable Alexander polynomial for framed, oriented transverse graphs without sinks or sources, 
embedded in the 3-sphere $S^3$. We prove that our invariant coincides with the $U_q(\mathfrak{gl}(1\vert 1))$-Alexander polynomial proposed by Viro.

\end{abstract}
\keywords{Alexander polynomial, transverse graph, rotation number}
\subjclass[2020]{Primary 57K10 05C10}

\maketitle

\tableofcontents

\section{Introduction}

The Alexander polynomial, originally defined for knots and links by J. W. Alexander in the 1920s, stands as one of the most fundamental invariants in low-dimensional topology. Over the decades, it has been generalized in numerous directions, including multi-variable versions for links, twisted Alexander polynomials, and extensions to spatial graphs.

In the context of spatial graphs, Viro \cite{MR2255851} constructed a multi-variable Alexander polynomial $\underline{\Delta}^1(\mathbb{G})$ using the representation theory of the quantum superalgebra $U_q(\mathfrak{gl}(1|1))$. While his invariant provides a powerful generalization for framed transverse graphs, it relies on sophisticated algebraic machinery. In our previous work \cite{MR4090586}, we developed a combinatorial approach to the Alexander polynomial for MOY graphs---transverse graphs with positive integer colorings---using Kauffman state sums. However, this construction was limited to single-variable polynomials obtained by specializing all variables through a homomorphism to $\mathbb{Z}$.

The present paper bridges these two approaches by extending our combinatorial construction to the multi-variable setting. A key inspiration comes from the classical theory of plane curves, where the \textit{rotation number} (or \textit{Whitney index}) measures the total turning number of an oriented immersed curve. Viro \cite{Viro1} provided an elegant formula that expresses this invariant as the sum of the winding numbers of the complementary regions minus the sum of the average winding numbers at self-intersection points. We extend this combinatorial perspective to transverse graph diagrams, defining a rotation number that serves as a crucial normalization factor in our construction.

Our main contribution is a diagrammatic formulation of a multi-variable Alexander polynomial $\Delta_{\mathbb{G}}$ for framed transverse graphs that provides a direct combinatorial interpretation of Viro's representation-theoretic invariant. This is accomplished through several key developments: extending Viro's rotation number formula to transverse graph diagrams (Definition~\ref{rotgraph}), constructing a normalized multi-variable state sum invariant (Definition~\ref{def:normalizedAlexanderpolynomial}, Theorem~\ref{thmDeltaG}), and proving the equivalence between our combinatorial approach and Viro's algebraic construction (Theorem~\ref{main}). Specifically, we establish that after a suitable change of variables, our polynomial and Viro's invariant are related by an explicit product factor over vertices of even type:
$$\Delta_{\mathbb{G}}(t_1^4, t_2^4, \ldots, t_k^4)=\prod_{\text{$v$: even type}} (t_v^{2}-t_v^{-2}) \cdot \underline{\Delta}^1(\mathbb{G}).$$

\subsection*{Organization} 
The paper is organized as follows. Section 2 develops the extended rotation number for transverse graph diagrams, generalizing Viro's formula from plane curves to diagrams of spatial graphs. Section 3 constructs the multi-variable Alexander polynomial via Kauffman states and explains how this invariant specializes to the original single-variable polynomial for MOY graphs. Section 4 establishes topological invariance, proving that our normalized Alexander polynomial is independent of the choice of base point and invariant under Reidemeister moves for framed transverse graphs. Section 5 reviews Viro's $U_q(\mathfrak{gl}(1|1))$-Alexander polynomial and proves the equivalence with our combinatorial invariant.

\subsection*{Acknowledgments}
We would like to thank Jiu Kang Yu for helpful discussions and suggestions. The first author is partially supported by a grant from TUMUG Support Program in Tohoku University. The second author is partially supported by a grant from the Research Grants Council of Hong Kong Special Administrative Region, China (Project No. 14301825) and a direct grant from CUHK (Project No. 4053717).

\section{Rotation numbers for transverse graph diagrams}

\subsection{Transverse spatial graphs}

We consider an oriented graph $G \subset S^3$ such that, 
for each vertex $v$, there exists a disk that separates the incoming 
and outgoing edges. We refer to such an orientation as a 
\emph{transverse orientation}, and a graph equipped with a transverse 
orientation will be called a \emph{transverse graph}. 
To the best of our knowledge, this terminology first appeared in 
\cite{MR3677933}, in the context of defining Heegaard Floer homology 
for graphs.

\begin{figure}[h!]
\begin{tikzpicture}[baseline=-0.65ex, thick, scale=1]
\draw (-1, -1.75) [->-] to (0, -0.75);
\draw (-0.5, -1.75) [->-] to (0, -0.75);
\draw (0, -0.75) [->] to (1, 0.25);
\draw (1, -1.75) [->-] to (0, -0.75);
\draw (0, -0.75) [->] to (-1, 0.25);
\draw (0, -0.75) [->] to (0.5, 0.25);
\draw (0, -0.75) node[circle,fill,inner sep=1.5pt]{};
\draw [dashed] (-0.7, -0.75)--(0.7, -0.75);
\draw (1.25, -0.75) node{$L_v$};
\draw (0.1, -1.5) node{$......$};
\draw (-0.1, 0) node{$......$};
\end{tikzpicture} \hspace{2cm}
\begin{tikzpicture}[baseline=-0.65ex, thick, scale=1]
\draw (0,-1)  to [out=90,in=270] (0.5,-0.33);
\draw (0,-1) to [out=270,in=180] (1.5,-2);
\draw (1.5,-2) to [out=0,in=0] (1.5,1);
\draw (0.5, -0.33) [->-] to (0.5,0.33);
\draw (0.5, 0.33) [->] to [out=90,in=0] (-0.5,1);
\draw (1,-1)  to [out=90,in=270] (0.5,-0.33);
\draw (1,-1) to [out=270,in=45] (0.6,-1.6);
\draw (0.3,-1.8) to [out=225,in=0] (-0.5,-2);
\draw (-0.5,-2) to [out=180,in=180] (-0.5,1);
\draw (1.5,1) [<-] to [out=180,in=90] (0.5,0.33);
\draw (0.5, -0.33) node[circle,fill,inner sep=1pt]{};
\draw (0.5, 0.33) node[circle,fill,inner sep=1pt]{};
\end{tikzpicture}
	\caption{The local picture of a vertex with transverse orientation (left). An oriented trivalent graph without sinks or sources is a transverse graph (right).}
\label{moygraph}
\end{figure}
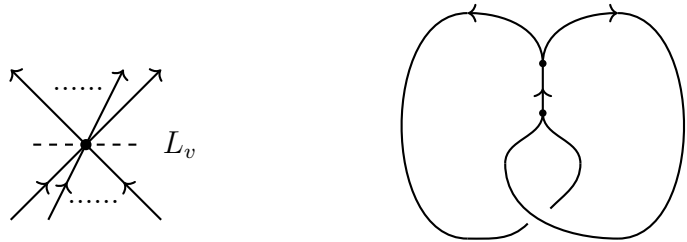

\begin{defn}
\rm 
A diagram $D$ of a transverse spatial graph $G$ on $\mathbb{R}^2$ is defined as a regular projection of $G$ satisfying the following conditions: 
\begin{enumerate}
  \item Self-intersections occur only as transverse double points between edges (called \emph{crossings}), where over/under information is specified.
  \item Around each vertex $v$, there exists a straight line $L_v$ separating the edges entering $v$ from those leaving $v$.
\end{enumerate}
When the position of the separating line $L_v$ is evident from the context, it may be omitted from the diagram.
\end{defn}

\medskip
\noindent
The following theorem is well-known to experts.
\begin{theo}
Two diagrams represent the same transverse graph if and only if they can be transformed into one another by a finite sequence of moves shown in Fig.~\ref{fig:e25}.
\end{theo}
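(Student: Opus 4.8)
The plan is to establish the two implications separately, with essentially all of the content residing in the ``only if'' direction. For the ``if'' direction I would check each move in Fig.~\ref{fig:e25} one at a time: every move is supported in a small ball, and outside that ball the diagram—and hence the spatial graph—is unchanged. Inside the ball I would exhibit an explicit ambient isotopy of $S^3$ carrying one local picture to the other, and verify that it preserves the orientation of every edge as well as the existence of a separating disk at each vertex, so that the transverse structure is maintained. This is a finite, routine verification.

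For the ``only if'' direction, suppose $D_0$ and $D_1$ are diagrams of transverse graphs $G_0$ and $G_1$ that represent the same transverse graph, i.e.\ there is an ambient isotopy $h_t\colon S^3\to S^3$ with $h_0=\mathrm{id}$ and $h_1(G_0)=G_1$ respecting orientations. Since a separating disk at a vertex is carried along by $h_t$, each intermediate $h_t(G_0)$ is again a transverse graph, so the transverse structure is automatically preserved throughout the isotopy. Working in the PL (or generic smooth) category, and removing a point of $S^3$ to obtain $\mathbb{R}^3$ with a fixed vertical projection $\pi\colon\mathbb{R}^3\to\mathbb{R}^2$, I would perturb the isotopy rel endpoints so that the projected family $D_t:=\pi(h_t(G_0))$ is \emph{generic}: for all but finitely many parameters $t$ the image $D_t$ is a regular diagram, and at each exceptional value the family crosses a single codimension-one stratum of the space of projections.

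The heart of the argument is then the classification of these codimension-one events. Away from the vertices, the analysis is identical to the classical Reidemeister theorem for links: the generic singularities are a tangency of the projection (Reidemeister~II), a triple point (Reidemeister~III), a cusp creating or removing a kink (Reidemeister~I), and a degeneration of the height function along an edge (absorbed by planar isotopy together with the $R$-moves). Near a vertex $v$, additional events appear: a strand passing over or under $v$ (the vertex analogue of the II/III moves), and events in which the projection of the separating disk degenerates, forcing the separating line $L_v$ to be re-chosen. I would show that each such event is realized by one of the moves in Fig.~\ref{fig:e25}, possibly after a planar isotopy, and that the transverse constraint forces the local models to be exactly those depicted, with the incoming and outgoing edges always remaining on opposite sides of $L_v$.

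The main obstacle I anticipate is precisely this vertex analysis under the transverse constraint. In the unrestricted spatial-graph case one may freely permute and rotate the edge-ends around a vertex, whereas here every intermediate diagram must admit a separating line, which restricts both the allowed local configurations and the permissible transitions between them. The delicate point is to show that the finitely many ways in which the cyclic arrangement of edges around $v$ can change, while keeping incoming edges separated from outgoing ones, are all generated by the vertex moves in Fig.~\ref{fig:e25}—equivalently, that rotating $L_v$ past an edge-end together with sliding a transverse strand across $v$ accounts for every codimension-one vertex event, and that no exotic degeneration of the separating disk can occur. Verifying this exhaustively is where the bulk of the careful case-checking lies.
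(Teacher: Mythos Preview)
The paper does not give a proof of this theorem at all; it is stated without argument after the remark ``The following theorem is well-known to experts.'' Your proposal is therefore not being compared against any proof in the paper, but rather against the folklore argument the authors are implicitly invoking.

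That said, your outline is the standard one for Reidemeister-type theorems for spatial graphs (as in Kauffman's work on rigid-vertex graphs, or Yetter's for ribbon graphs): check that each move is realized by a local ambient isotopy, then put a given isotopy in general position with respect to the projection and classify the codimension-one degenerations. Your identification of the classical moves I--III away from vertices and of move~IV as a strand sliding past a vertex is correct. The one place where your sketch is slightly misaligned with the moves actually listed is the vertex reordering: move~V in Fig.~\ref{fig:e25} does not ``rotate $L_v$ past an edge-end'' but rather swaps two \emph{adjacent edges on the same side of $L_v$} (both incoming, or by symmetry both outgoing) at the cost of a crossing. The transverse constraint means $L_v$ itself never crosses an edge-end---incoming and outgoing edges stay on their respective sides throughout---so the only genuine vertex event beyond~IV is a transposition of two neighboring same-type edges, which is exactly~V. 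With that correction your case analysis is complete, and the approach is sound.
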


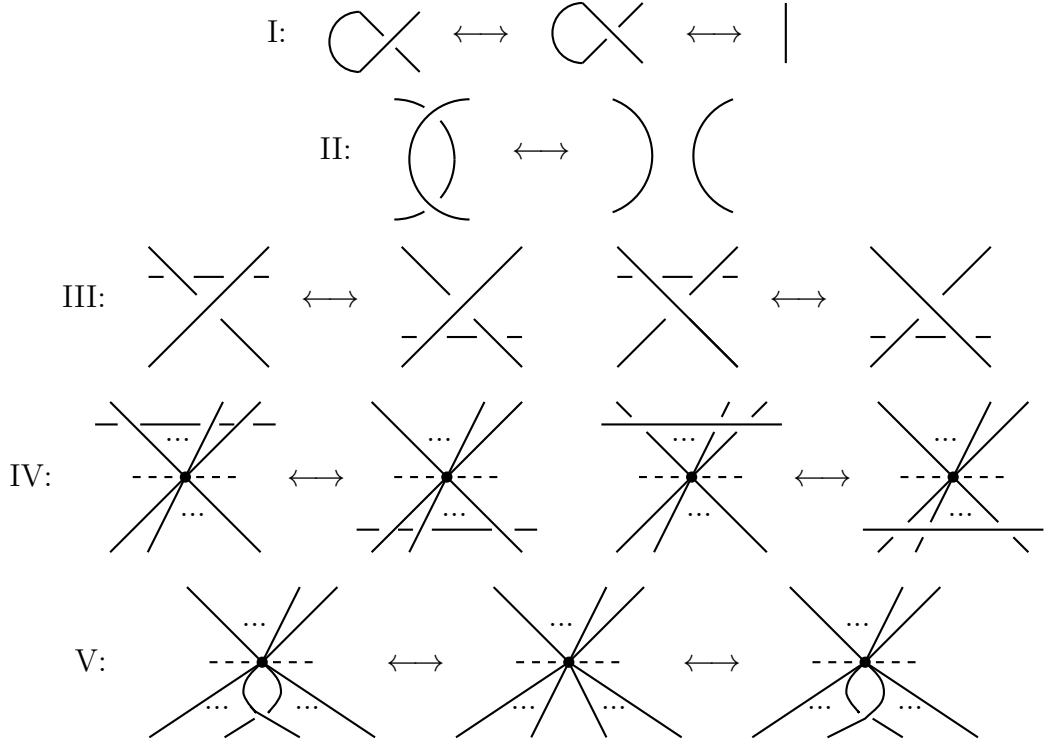
\begin{figure}[h!]
I: \quad \begin{tikzpicture}[baseline, thick, scale=0.4]
\draw (1,-1)   -- (0.2,-0.2);
\draw (-1, -1) -- (1, 1);
\draw (-0.2,0.2) --  (-1,1) ;
\draw (-1, 1) arc (90:270:1);
\end{tikzpicture}\quad  $\longleftrightarrow$ \quad
\begin{tikzpicture}[baseline=-0.65ex, thick, scale=0.4]
\draw (-1,-1)   -- (-0.2,-0.2);
\draw (1, -1) -- (-1, 1) ;
\draw (0.2,0.2)  --  (1,1);
\draw (-1, 1) arc (90:270:1);
\end{tikzpicture} \quad  $\longleftrightarrow$ \quad
\begin{tikzpicture}[baseline=-0.65ex, thick, scale=0.4]
\draw (0, -1) -- (0, 1) ;
\end{tikzpicture} \\  \vspace{3mm}
II: \quad \begin{tikzpicture}[baseline, thick, scale=0.4]
\draw (-1, 2) arc (90:270:2);
\draw (-3.5, 2) arc (90:60:2);
\draw (-1.5, 0) arc (0:40:2);
\draw (-1.5, 0) arc (0:-40:2);
\draw (-3.5, -2) arc (270:300:2);
\end{tikzpicture} \quad  $\longleftrightarrow$ \quad
\begin{tikzpicture}[baseline, thick, scale=0.4]
\draw (-1, 2) arc (110:250:2);
\draw (-5, 2) arc (70:-70:2);
\end{tikzpicture} \\  \vspace{3mm}
III: \quad\begin{tikzpicture}[baseline, thick, scale=0.4]
\draw (-2, 2) -- (-0.4, 0.4) ;
\draw (-2, 1) -- (-1.5, 1) ;
\draw (1.5, 1) -- (2, 1) ;
\draw (-0.5, 1) -- (0.5, 1) ;
\draw (0.4, -0.4) -- (2, -2) ;
\draw (2, 2) -- (-2, -2) ;
\end{tikzpicture}\quad  $\longleftrightarrow$ \quad
\begin{tikzpicture}[baseline, thick, scale=0.4]
\draw (-2, 2) -- (-0.4, 0.4) ;
\draw (-2, -1) -- (-1.5, -1) ;
\draw (1.5, -1) -- (2, -1) ;
\draw (-0.5, -1) -- (0.5, -1) ;
\draw (0.4, -0.4) -- (2, -2) ;
\draw (2, 2) -- (-2, -2) ;
\end{tikzpicture}\quad\quad\quad
\begin{tikzpicture}[baseline, thick, scale=0.4]
\draw (-2, 2) -- (2, -2) ;
\draw (-2, 1) -- (-1.5, 1) ;
\draw (1.5, 1) -- (2, 1) ;
\draw (-0.5, 1) -- (0.5, 1) ;
\draw (0.4, -0.4) -- (2, -2) ;
\draw (2, 2) -- (0.4, 0.4) ;
\draw (-2, -2) -- (-0.4, -0.4) ;
\end{tikzpicture}\quad  $\longleftrightarrow$ \quad
\begin{tikzpicture}[baseline, thick, scale=0.4]
\draw (-2, 2) -- (2, -2) ;
\draw (-2, -1) -- (-1.5, -1) ;
\draw (1.5, -1) -- (2, -1) ;
\draw (-0.5, -1) -- (0.5, -1) ;
\draw (2, 2) -- (0.4, 0.4) ;
\draw (-2, -2) -- (-0.4, -0.4) ;
\end{tikzpicture}\\\vspace{4mm}
IV: \quad \begin{tikzpicture}[baseline=-0.65ex, thick, scale=1]
\draw (-1, -1) -- (0, 0);
\draw (-0.5, -1) -- (0, 0);
\draw (0, 0) -- (1, 1);
\draw (1, -1) -- (0, 0);
\draw (0, 0) -- (-1,1);
\draw (0, 0) -- (0.5, 1);
\draw (0, 0) node[circle,fill,inner sep=1.5pt]{};
\draw [dashed] (-0.7, 0)--(0.7, 0);
\draw (0.1, -0.5) node{$...$};
\draw (-0.1, 0.5) node{$...$};
\draw (-1.2, 0.7) -- (-0.9, 0.7);
\draw (0.9, 0.7) -- (1.2, 0.7);
\draw (0.65, 0.7) -- (0.45, 0.7);
\draw (-0.6, 0.7) -- (0.2, 0.7);
\end{tikzpicture}  $\longleftrightarrow$
\begin{tikzpicture}[baseline=-0.65ex, thick, scale=1]
\draw (-1, -1) --  (0, 0);
\draw (-0.5, -1) -- (0, 0);
\draw (0, 0) -- (1, 1);
\draw (1, -1)--  (0, 0);
\draw (0, 0) --  (-1,1);
\draw (0, 0) --  (0.5, 1);
\draw (0, 0) node[circle,fill,inner sep=1.5pt]{};
\draw [dashed] (-0.7, 0)--(0.7, 0);
\draw (0.1, -0.5) node{$...$};
\draw (-0.1, 0.5) node{$...$};
\draw (-1.2, -0.7) -- (-0.9, -0.7);
\draw (0.9, -0.7) -- (1.2, -0.7);
\draw (-0.65, -0.7) -- (-0.45, -0.7);
\draw (0.6, -0.7) -- (-0.2, -0.7);
\end{tikzpicture}\quad\quad
\begin{tikzpicture}[baseline=-0.65ex, thick, scale=1]
\draw (-1, -1) --  (0, 0);
\draw (-0.5, -1) -- (0, 0);
\draw (0, 0) -- (0.6, 0.6);
\draw (0.8, 0.8) --  (1, 1);
\draw (1, -1) --  (0, 0);
\draw (0, 0) -- (-0.6,0.6);
\draw (-0.8, 0.8) --  (-1,1);
\draw (0, 0) -- (0.3, 0.6);
\draw (0.4, 0.8) --  (0.5, 1);
\draw (0, 0) node[circle,fill,inner sep=1.5pt]{};
\draw [dashed] (-0.7, 0)--(0.7, 0);
\draw (0.1, -0.5) node{$...$};
\draw (-0.1, 0.5) node{$...$};
\draw (-1.2, 0.7) -- (1.2, 0.7);
\end{tikzpicture}  $\longleftrightarrow$
\begin{tikzpicture}[baseline=-0.65ex, thick, scale=1]
\draw (-1, -1) -- (-0.8, -0.8);
\draw (-0.6, -0.6) --  (0, 0);
\draw (-0.5, -1) -- (-0.4, -0.8);
\draw (-0.3, -0.6) --  (0, 0);
\draw (0, 0) -- (1, 1);
\draw (0.6, -0.6) -- (0, 0);
\draw (1, -1) -- (0.8, -0.8);
\draw (0, 0) -- (-1,1);
\draw (0, 0) -- (0.5, 1);
\draw (0, 0) node[circle,fill,inner sep=1.5pt]{};
\draw [dashed] (-0.7, 0)--(0.7, 0);
\draw (0.1, -0.5) node{$...$};
\draw (-0.1, 0.5) node{$...$};
\draw (-1.2, -0.7) -- (1.2, -0.7);
\end{tikzpicture}\\\vspace{4mm}
V: \quad \begin{tikzpicture}[baseline=-0.65ex, thick, scale=1]
\draw (-1.5, -1) -- (0, 0);
\draw (0, 0) -- (1, 1);
\draw (1.5, -1) -- (0, 0);
\draw (0, 0) -- (-1,1);
\draw (0, 0) -- (0.5, 1);
\draw (0, 0) node[circle,fill,inner sep=1.5pt]{};
\draw [dashed] (-0.7, 0)--(0.7, 0);
\draw (0.6, -0.6) node{$...$};
\draw (-0.6, -0.6) node{$...$};
\draw (-0.1, 0.5) node{$...$};
\draw (0,0) to [out=225,in=90] (-0.25,-0.4) to [out=270,in=315] (-0.1,-0.65) to (0.5, -1);
\draw (-0.5, -1) -- (-0.1, -0.75);
\draw (0,0) to [out=315,in=90] (0.25,-0.4) to [out=270,in=225] (0.1,-0.65);
\end{tikzpicture}
$\longleftrightarrow$
\begin{tikzpicture}[baseline=-0.65ex, thick, scale=1]
\draw (-1.5, -1) -- (0, 0);
\draw (-0.5, -1) -- (0, 0);
\draw (0.5, -1) -- (0, 0);
\draw (0, 0) -- (1, 1);
\draw (1.5, -1) -- (0, 0);
\draw (0, 0) -- (-1,1);
\draw (0, 0) -- (0.5, 1);
\draw (0, 0) node[circle,fill,inner sep=1.5pt]{};
\draw [dashed] (-0.7, 0)--(0.7, 0);
\draw (0.6, -0.6) node{$...$};
\draw (-0.6, -0.6) node{$...$};
\draw (-0.1, 0.5) node{$...$};
\end{tikzpicture}$\longleftrightarrow$
\begin{tikzpicture}[baseline=-0.65ex, thick, scale=1]
\draw (-1.5, -1) -- (0, 0);
\draw (0, 0) -- (1, 1);
\draw (1.5, -1) -- (0, 0);
\draw (0, 0) -- (-1,1);
\draw (0, 0) -- (0.5, 1);
\draw (0, 0) node[circle,fill,inner sep=1.5pt]{};
\draw [dashed] (-0.7, 0)--(0.7, 0);
\draw (0.6, -0.6) node{$...$};
\draw (-0.6, -0.6) node{$...$};
\draw (-0.1, 0.5) node{$...$};
\draw (0,0) to [out=315,in=90] (0.25,-0.4) to [out=270,in=45] (0,-0.75) to (-0.5, -1);
\draw (0.5, -1) -- (0.1, -0.75);
\draw (0,0) to [out=225,in=90] (-0.25,-0.4) to [out=270,in=315] (-0.1,-0.65);
\end{tikzpicture}
\caption{Reidemeister moves for transverse graph diagrams. Suppressed orientations of the edges can be added in all compatible ways.}
\label{fig:e25}
\end{figure}

For a transverse graph $G$, let $S^3 \setminus G$ denote its complement in $S^3$. 
In this paper, we consider only transverse graphs $G$ where the meridian of each edge represents a \textit{nontrivial} element of $H_1(S^3 \setminus G;\mathbb{Z})$. This condition ensures that every edge of $G$ lies in at least one cycle.

\subsection{Extended rotation number}

Throughout this paper, a diagram $D$ is called {\it connected} if it is connected as a subspace of $\mathbb{R}^2$, i.e., the projection of the graph forms a connected set in the plane. This is distinct from the connectivity of the underlying graph $G \subset S^3$ itself. For example, a standard Hopf link diagram is connected in $\mathbb{R}^2$ even though the link is disconnected in $S^3$. 
This distinction is important for our definitions of the rotation number and state sum, which depend on the connectivity of the diagram as a planar object rather than the spatial connectivity of the graph.

\subsubsection{Classical rotation number for plane curves}
We begin by recalling the classical rotation number for plane curves.
For a plane curve, the rotation number (or Whitney index) \cite{MR1556973} measures the total number of turns made while traveling along the curve. For a union $D$ of finitely many oriented plane curves, The rotation number $w(D)$ is the sum of the rotation numbers of its components.

Among various studies, Viro \cite{Viro1} provided a formula for computing $w(D)$ in terms of the regions and double points associated with $D$. The formula is described as follows.

\medskip
For each region $r$ of $\mathbb{R}^2 \setminus D$, define $\chi(r)$ to be its winding number. Specifically, the unbounded region has winding number $0$, and $\chi(r)$ of the remaining regions are determined by the rule: when crossing a curve from left to right (with respect to the orientation of the curve), the winding number decreases by $1$.

\begin{center}
\begin{tikzpicture}[baseline=-0.65ex, thick, scale=1.8]
\draw (0,0) [->] to (0,1);
\draw (0.4, 0.5) node {$a$};
\draw (-0.5, 0.5) node {$a+1$};
\end{tikzpicture}
\end{center}

For each double point $v$ of $D$, we define $\chi(v)$ as the average of the winding numbers of the four regions adjacent to $v$. More precisely, when the adjacent regions have winding numbers $a$, $a+1$, $a+1$, and $a+2$,  assign $\chi(v)=a+1$, as illustrated below.
\begin{figure}[h!]
\begin{tikzpicture}[baseline=-0.65ex, thick, scale=1]
\draw (-1,-1) [->] to (1,1);
\draw (1,-1) [->] to (-1,1);
\draw (1, 0) node {$a$};
\draw (-1, 0) node {$a+2$};
\draw (0, 1) node {$a+1$};
\draw (0, -1) node {$a+1$};
\end{tikzpicture}
\end{figure}

\begin{theo}[Viro \cite{Viro1}]
\label{Viroformula}
Assume that $D$ is connected as a subspace of $\mathbb{R}^2$. Then the rotation number of $D$ is given by
\[
w(D) = \sum_{r} \chi(r) - \sum_{v} \chi(v),
\]
where the first sum is taken over all regions $r$ of $\mathbb{R}^2 \setminus D$, and the second sum is taken over all double points $v$ of $D$.
\end{theo}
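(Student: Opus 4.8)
The plan is to realize $D$ as a $4$-valent plane graph---vertices are the double points, edges are the arcs between consecutive double points, faces are the complementary regions---and to compute the total turning $2\pi\, w(D) = \oint_D d\theta$ by weighting a Gauss--Bonnet calculation over the faces by the winding numbers $\chi(r)$. The starting observation is that $\chi$ is constant on each region and jumps by $+1$ when an edge is crossed from right to left, so for every edge $e$ the region $r_L$ on its left and the region $r_R$ on its right satisfy $\chi(r_L) - \chi(r_R) = 1$. Equivalently $D = \partial\big(\sum_r \chi(r)\,[r]\big)$ as oriented $1$-chains, and this is the feature that makes the region weights interact cleanly with boundary integrals.

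First I would prove the telescoping identity
\[
\sum_r \chi(r)\int_{\partial r}\kappa_g\,ds = 2\pi\, w(D).
\]
Orienting each $\partial r$ so that $r$ lies on its left, every edge $e$ appears once in $\partial r_L$ with $D$'s orientation and once in $\partial r_R$ with the opposite orientation, contributing $(\chi(r_L) - \chi(r_R))\tau_e = \tau_e$, where $\tau_e = \int_e \kappa_g\,ds$ is the turning along $e$. Because $D$ is a smooth immersion, the two strands through each double point are individually smooth, so cutting $D$ at the double points introduces no angle defects and $\sum_e \tau_e = \oint_D d\theta = 2\pi\, w(D)$. The unbounded region carries weight $\chi = 0$ and drops out, which lets me stay in the flat plane rather than passing to $S^2$.

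Next I would apply the flat Gauss--Bonnet theorem $\int_{\partial r}\kappa_g\,ds + \sum_v \epsilon_v^r = 2\pi$ to each bounded region $r$, where $\epsilon_v^r = \pi - \alpha_v^r$ is the exterior angle at a corner $v$. Weighting by $\chi(r)$ and summing rewrites the left-hand side of the telescoping identity as $2\pi\sum_r \chi(r) - \sum_v\big(\sum_{r\ni v}\chi(r)\,\epsilon_v^r\big)$. The computation then localizes at each double point: the four sectors at $v$ have interior angles $\alpha,\beta,\alpha,\beta$ with $\alpha+\beta = \pi$ (vertical angles are equal), and each vertical-angle pair carries winding numbers summing to $2(a+1)$; a short computation gives $\sum_{r\ni v}\chi(r)\,\epsilon_v^r = 2(a+1)\big((\pi-\alpha)+(\pi-\beta)\big) = 2\pi(a+1) = 2\pi\,\chi(v)$. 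Combining the two expressions for $\sum_r\chi(r)\int_{\partial r}\kappa_g$ yields $2\pi\, w(D) = 2\pi\sum_r\chi(r) - 2\pi\sum_v\chi(v)$, which is the claim.

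The step I expect to be the crux is the tacit use that every bounded region is a topological disk, so that $\chi_{\mathrm{top}}(r)=1$ and the right-hand side of Gauss--Bonnet is exactly $2\pi$; this is precisely where the connectivity hypothesis enters, since a connected graph embedded in $S^2$ is cellularly embedded and hence has only disk faces, whereas a disconnected diagram (e.g.\ two nested circles) produces an annular region with $\chi_{\mathrm{top}}=0$ and introduces exactly the correction $\chi(r)(\chi_{\mathrm{top}}(r)-1)$ that makes the naive formula fail. The remaining points are routine: verifying the identity at a general non-perpendicular transverse crossing as above, and the base case of a single embedded circle, where there are no double points and $w(D)=\pm 1$ equals the winding number of the unique bounded region. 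An alternative, more combinatorial route is induction on the number of double points via the oriented smoothing, which one checks leaves both $w(D)$ and $\sum_r\chi(r)-\sum_v\chi(v)$ unchanged; there the main difficulty is instead that smoothing may disconnect the diagram, again identifying connectivity as the essential hypothesis.
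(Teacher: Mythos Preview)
The paper does not supply its own proof of this statement: Theorem~\ref{Viroformula} is quoted from Viro's paper \cite{Viro1} and used only as motivation for the graph-diagram generalization in Definition~\ref{rotgraph}. So there is no in-paper argument to compare against.

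Your Gauss--Bonnet argument is correct and self-contained. The telescoping step is right (each edge contributes $(\chi(r_L)-\chi(r_R))\tau_e=\tau_e$, and smoothness of each strand through a double point means $\sum_e\tau_e=2\pi\,w(D)$), the per-vertex computation is right (both vertical-angle pairs carry winding numbers summing to $2(a+1)$, so the weighted exterior-angle sum is $2(a+1)\big((\pi-\alpha)+(\pi-\beta)\big)=2\pi\chi(v)$ regardless of the crossing angle), and your identification of the connectivity hypothesis with ``every bounded face is a disk, so Gauss--Bonnet gives exactly $2\pi$'' is exactly the point. The unbounded region is handled correctly by $\chi=0$. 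The alternative induction via oriented smoothing that you sketch at the end is also a valid route, with the same caveat about connectivity; either would serve as a complete proof.
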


\medskip

\subsubsection{Generalization to transverse graphs}
Motivated by Viro's formula, we extend the rotation number to transverse graph diagrams. 

\medskip
Let $G$ be a transverse graph in $S^3$. We begin with a standard presentation of the first homology group $H_1(S^3 \setminus G; \mathbb{Z})$, which is a finitely-generated free abelian group.  
\begin{enumerate}
  \item \textit{Generators:} To each edge $e$ of $G$ (including loops), assign a generator $t_e$ corresponding to the homology class of its oriented meridian. 
  
  \item \textit{Relators:} All generators \textit{commute}, i.e., $ts = st$ for any generators $s$ and $t$. Additionally, at each vertex $v$ of $G$, if $s_1, s_2, \dots, s_k$ are the generators for incoming edges and $t_1, t_2, \dots, t_l$ for outgoing edges, then:
  \[
    s_1 s_2 \cdots s_k = t_1 t_2 \cdots t_l.
  \]
\end{enumerate}

\begin{rem}
While the first homology group is conventionally expressed additively as an abelian group, we intentionally use multiplicative notation with the relation $ts = st$ to indicate commutativity. This choice of notation will facilitate later sections where we define the Alexander polynomial using group rings.
\end{rem}

Consider a diagram $D$ of a transverse graph. To each connected component $r$ of $\mathbb{R}^2 \setminus D$ (called a {\it regular region}), we assign a value $\chi(r)\in H_1(S^3 \setminus G; \mathbb{Z})$, which we call {\it the winding number} of $r$. This notation extends the classical winding number, and it is defined as follows:
\begin{enumerate}
  \item The unique unbounded region has winding number $1 \in H_1(S^3 \setminus G; \mathbb{Z})$.
  
  \item For the remaining regions, $\chi(r)$ are determined inductively according to the rule illustrated below: 
  if an edge $e$ points upward and its left-hand side region has winding number $x \in H_1(S^3 \setminus G; \mathbb{Z})$, then the right-hand side region has winding number $x \cdot t_e$, where $t_e$ denotes the oriented meridian of the edge $e$.
\end{enumerate}

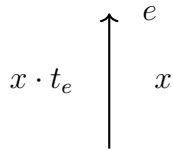
\begin{figure}[h!]
\centering
\begin{tikzpicture}[baseline=-0.65ex, thick, scale=1.8]
  \draw[->] (0,0) -- (0,1);
  \node at (0.3,1) {$e$};
  \node at (0.4,0.5) {$x$};
  \node at (-0.5,0.5) {$x \cdot t_e$};
\end{tikzpicture}
\caption{Inductive rule for assigning winding numbers to regions across an edge $e$.}
\end{figure}

For each vertex $v$ of $D$, we define a value $\chi(v)$ as follows. Let $s_1, s_2, \dots, s_k$ and $t_1, t_2, \dots, t_l$ denote the generators corresponding to the incoming and outgoing edges around $v$ respectively, as shown in Fig.~\ref{f4}.
Let $x_i$ denote the winding number of the region between $s_i$ and $s_{i+1}$ ($1 \le i \le k-1$), and $y_j$ the winding number of the region between $t_j$ and $t_{j+1}$ ($1 \le j \le l-1$). When $k=1$ or $l=1$, there are no corresponding $x_i$ or $y_j$.

\begin{figure}[h!]
\centering
\begin{tikzpicture}[baseline=-0.65ex, thick, scale=1.5] 
\draw (-1, -1.25) node[below]{$s_1$}[->-] to (0, -0.25); 
\draw (-0.5, -1.25)node[below]{$s_2$} [->-] to (0, -0.25); 
\draw (0, -0.25) [->] to (1, 0.75)node[above]{$t_l$}; 
\draw (1, -1.25) node[below]{$s_k$} [->-] to (0, -0.25); 
\draw (0, -0.25) [->] to (-1, 0.75) node[above]{$t_1$}; 
\draw (0, -0.25) [->] to (-0.5, 0.75)node[above]{$t_2$}; 
\draw (0, -0.25) node[circle,fill,inner sep=1.5pt]{}; 
\draw [dashed] (-0.7, -0.25)--(0.7, -0.25); \draw (1.25, -0.25) node{$L_v$}; 
\draw (-0.65, 0.6) node {$\color{gray}{x_1}$};
\draw (0, 0.6) node {$\color{gray}{x_2\cdots}$};
\draw (-0.65, -1.15) node {$\color{gray}{y_1}$};
\draw (0, -1.15) node {$\color{gray}{y_2\cdots}$};\end{tikzpicture}
\caption{Incoming and outgoing edges around a vertex $v$. Gray letters indicate the winding numbers of the regions between consecutive edges.}
\label{f4}
\end{figure}
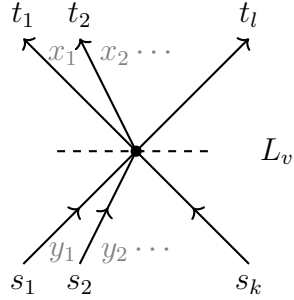

Then define
\[
\chi(v) := \left(\prod_{i=1}^{k-1} x_i^{1/2}\right)\cdot \left(\prod_{j=1}^{l-1} y_j^{1/2}\right),
\]
i.e., the product is taken over all regions around $v$ except the two regions that the separating line $L_v$ intersects.

For each double point $v$ of $D$, treating it as a vertex with two incoming and two outgoing edges, we define: 
\[
\chi(v) = x^{1/2} y^{1/2},
\] 
where $x$ and $y$ are the winding numbers of the south and north corner regions respectively.

\begin{lemma}
Suppose $D$ is a connected diagram of a transverse spatial graph $G$. Let $V$ be the set of vertices of $D$, and let $X(D)$ denote the set of crossings (double points) of $D$. 
Then the product
\[
\prod_{v \in V \cup X(D)} \chi(v)
\] 
is an element of $H_1(S^3 \setminus G; \mathbb{Z})$.
\end{lemma}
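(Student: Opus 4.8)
The plan is to collect the square-root factors in $\prod_v \chi(v)$ region by region, thereby reducing the statement to a single parity fact about the diagram.

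By definition $\chi(v)$ is the product of the square roots of the winding numbers $\chi(r)$ of the regions sitting at the \emph{relevant corners} of $v$: the corners $N$ and $S$ at a crossing, and the corners lying between two consecutive incoming edges or between two consecutive outgoing edges at a graph vertex. Collecting these half-powers over all $v \in V \cup X(D)$ and regrouping them region by region, I would write, inside the rational vector space $H_1(S^3\setminus G;\mathbb{Z}) \otimes \mathbb{Q}$ (which makes sense since $H_1(S^3\setminus G;\mathbb{Z})$ is free abelian, hence torsion-free and uniquely $2$-divisible after tensoring with $\mathbb{Q}$),
\[
\prod_{v \in V \cup X(D)} \chi(v) \;=\; \prod_{r} \chi(r)^{\,n(r)/2},
\]
where $n(r)$ is the number of relevant corners incident to the region $r$. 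Since each $\chi(r)$ already lies in $H_1(S^3\setminus G;\mathbb{Z})$, it suffices to prove that $n(r)$ is \emph{even} for every region $r$: then every exponent $n(r)/2$ is an integer and the product lies in $H_1(S^3\setminus G;\mathbb{Z})$.

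The heart of the proof is thus the combinatorial claim that each region meets an even number of relevant corners, and the observation that unifies crossings and graph vertices is a reinterpretation of relevance in terms of orientations. A corner at a point $p$ bounded by two edge-ends $a,b$ is relevant exactly when $a$ and $b$ are \emph{both oriented toward $p$ or both oriented away from $p$} --- a ``sink'' or ``source'' corner --- and is non-relevant exactly when one points in and the other out. One checks that for a crossing this selects the corners $N$ (both out) and $S$ (both in), while at a graph vertex it selects the corners between two incoming or two outgoing edges, the two excluded corners on $L_v$ being precisely the mixed ones.

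With this characterization I would traverse each boundary component of $r$ as a closed walk keeping $r$ on a fixed side, attaching to each traversed edge-end the sign $+1$ or $-1$ according to whether the walk runs with or against the orientation of that edge. A short local check at a corner shows that this sign flips exactly at the source/sink (relevant) corners and is preserved at the mixed ones. Because each boundary walk is closed it returns to its initial sign, so the number of sign flips, hence the number of relevant corners along that component, is even; summing over the boundary components of $r$ gives that $n(r)$ is even. Establishing this parity is the main obstacle of the argument; the connectivity hypothesis on $D$ is not needed here but enters earlier, in guaranteeing that the winding numbers $\chi(r)$ are well-defined elements of $H_1(S^3\setminus G;\mathbb{Z})$. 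Once the parity is in place, the lemma is immediate from the displayed identity.
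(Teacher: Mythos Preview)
Your proof is correct and follows essentially the same approach as the paper's: regroup the half-powers region by region, characterize the ``relevant'' corners as those where both adjacent edge-ends point the same way relative to the corner, and then use the closed boundary walk of each region to establish the parity of $n(r)$. The only cosmetic difference is in how the final parity step is phrased: the paper counts incoming versus outgoing edge-ends along $\partial r$ (each edge contributes one of each, forcing the number of ``both-in'' corners to equal the number of ``both-out'' corners), whereas you track a $\pm 1$ sign along the walk and observe that it flips precisely at the relevant corners; these are two renderings of the same observation.
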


\begin{proof}
It suffices to show that for each region $r$ of $\mathbb{R}^2 \setminus D$, the factor $\chi(r)^{1/2}$ appears an even number of times in 
$
\prod_{v \in V \cup X(D)} \chi(v).
$

Consider the boundary $\partial r$ of region $r$ as a closed curve (possibly with repeated vertices and edges). At each vertex $v$ on $\partial r$, the boundary passes through two edges incident to $v$ that are consecutive in the cyclic order around $v$ with respect to region $r$. The contribution of $r$ to $\chi(v)$ is $\chi(r)^{1/2}$ if both edges are incoming to $v$ or both are outgoing from $v$, and $1$ otherwise (i.e., if one edge is incoming and the other is outgoing).  Note that at vertices where the boundary curve has a self-intersection, $r$ may appear in multiple corners around $v$, and each such occurrence is counted separately.

Now observe that each edge on the boundary $\partial r$ contributes once as an incoming edge at its head and once as an outgoing edge at its tail.  Therefore, the total number of incoming edges equals outgoing edges, forcing the number of vertices with both edges incoming to equal those with both outgoing. Consequently, $\chi(r)^{1/2}$ appears an even number of times in the product.


\end{proof}

Now we extend Viro's formula in Theorem \ref{Viroformula} to a transverse graph diagram.

\begin{defn}
\label{rotgraph}
\rm 
Let $D$ be a connected diagram of a transverse graph. The {\it rotation number} of $D$ is defined as  
\[
\mathrm{Rot}(D) \;:=\; \prod_{r \in F(D)} \chi(r) \; \bigg[ \prod_{v \in V \cup X(D)} \chi(v) \bigg]^{-1},
\]
where $F(D)$ denotes the set of regular regions of $\mathbb{R}^2 \setminus D$, and $X(D)$ denotes the set of crossings of $D$.   

For a disconnected diagram, the rotation number is defined as the product of the rotation numbers of its connected components.
\end{defn}

\begin{ex}
\rm
Consider the diagram $D$ in Fig.~\ref{example}.  Let $t$ and $s$ denote the generators corresponding to the nearby edges.
Gray letters indicate the winding numbers of the corresponding regions.
There are four regions, two vertices, and one crossing.  

We compute
\[
\prod_{r \in F(D)} \chi(r) \;=\; t \cdot s^{-1} \cdot ts^{-1} \;=\; t^{2}s^{-2},
\]
and
\[
\prod_{v \in V \cup X(D)} \chi(v) \;=\; (ts^{-1})^{1/2} \cdot (ts^{-1})^{1/2} \;=\; ts^{-1}.
\]
Therefore,
\[
\mathrm{Rot}(D) \;=\; t^{2}s^{-2} \,(ts^{-1})^{-1} \;=\; ts^{-1}.
\]
\end{ex}

 \begin{figure}[h!]
\begin{tikzpicture}[baseline=-0.65ex, thick, scale=1]
\draw (0,-1)  to [out=90,in=270] (0.5,-0.33);
\draw (0,-1) to [out=270,in=180] (1.5,-2);
\draw (1.5,-2) to [out=0,in=0] (1.5,1);
\draw (0.5, -0.33) [->-] to (0.5,0.33);
\draw (0.5, 0.33) [->] to [out=90,in=0] (-0.5,1);
\draw (1,-1)  to [out=90,in=270] (0.5,-0.33);
\draw (1,-1) to [out=270,in=45] (0.6,-1.6);
\draw (0.3,-1.8) to [out=225,in=0] (-0.5,-2);
\draw (-0.5,-2) to [out=180,in=180] (-0.5,1);
\draw (1.5,1) [<-] to [out=180,in=90] (0.5,0.33);
\draw (0,1.2) node {$t$};
\draw (1,1.2) node {$s$};
\draw (0.9,0) node {$ts$};
\draw (3,0) node {$\color{gray}{1}$};
\draw (1.6,-0.5) node {$\color{gray}{s^{-1}}$};
\draw (-0.6,-0.5) node {$\color{gray}{t}$};
\draw (0.5,-1) node {$\color{gray}{ts^{-1}}$};
\draw (0.5, -0.33) node[circle,fill,inner sep=1pt]{};
\draw (0.5, 0.33) node[circle,fill,inner sep=1pt]{};
\end{tikzpicture}
\caption{A diagram of a trivalent graph.}
\label{example}
\end{figure}
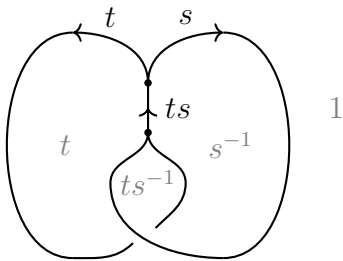

\medskip

\subsection{Properties of the rotation number}
We discuss some properties of the rotation number $\mathrm{Rot}(D)$.

\begin{prop}
\label{prop1}
The rotation number satisfies the following relations. In cases (iv), (v), and (vi), for diagrams $D$ and $D'$ corresponding to graphs $G$ and $G'$ respectively, if $H_1(S^3\setminus G; \mathbb{Z})$ embeds as a subgroup of $ H_1(S^3\setminus G'; \mathbb{Z})$, then we regard $\mathrm{Rot}(D)$ as an element of $H_1(S^3\setminus G'; \mathbb{Z})$ via this inclusion. 
\begin{enumerate}
\item
$\mathrm{Rot}\left(\begin{tikzpicture}[baseline=-0.65ex, thick, scale=0.8]
\draw (-1, -1.25) node[below]{$s_1$}[->-] to (0, -0.25);
\draw (-0.5, -1.25)node[below]{$s_2$} [->-] to (0, -0.25);
\draw (0, -0.25) [->] to (1, 0.75)node[above]{$t_l$};
\draw (1, -1.25) node[below]{$s_k$} [->-] to (0, -0.25);
\draw (0, -0.25) [->] to (-1, 0.75) node[above]{$t_1$};
\draw (0, -0.25) [->] to (-0.5, 0.75)node[above]{$t_2$};
\draw (0.1, -1) node{$......$};
\draw (0.1, 0.5) node{$......$};
\end{tikzpicture}\right)=\mathrm{Rot}\left(\begin{tikzpicture}[baseline=-0.65ex, thick, scale=0.8]
\draw (-1, -1.5) node[below]{$s_1$}[->-] to (0, -0.5);
\draw (-0.5, -1.5)node[below]{$s_2$} [->-] to (0, -0.5);
\draw (0, 0) [->] to (1, 1)  node[above]{$t_l$};
\draw (1, -1.5) node[below]{$s_k$} [->-] to (0, -0.5);
\draw (0, 0) [->] to (-1, 1)  node[above]{$t_1$};
\draw (0, 0) [->] to (-0.5, 1)  node[above]{$t_2$};
\draw (0, -0.5) [->-] to (0, 0);
\draw (0.1, -1.25) node{$......$};
\draw (0.1, 0.75) node{$......$};
\end{tikzpicture}\right)
$
\item $\mathrm{Rot} \left(\begin{tikzpicture}[baseline=-0.65ex, thick, scale=0.8]
\draw (0, 0) [->] to (1, 1) node[above]{$t_l$};
\draw (0, 0) [->] to (-1, 1) node[above]{$t_1$};
\draw (0, 0) [->] to (-0.5, 1) node[above]{$t_2$};
\draw (0, -1)node[below]{$t_1t_2 \cdots t_l$} [->-] to (0, 0);
\draw (0.1, 0.75) node{$......$};
\end{tikzpicture}\right)=\mathrm{Rot}\left(\begin{tikzpicture}[baseline=-0.65ex, thick, scale=0.8]
\draw (0, -0.5) [->] to (1, 1) node[above]{$t_l$};
\draw (0, -0.5) [->] to (-1, 1) node[above]{$t_1$};
\draw (0.25, -0.1) [->] to (-0.5, 1) node[above]{$t_2$};
\draw (0, -1.25) node[below]{$t_1t_2 \cdots t_l$} [->-] to (0, -0.5);
\draw (0.1, 0.75) node{$......$};
\end{tikzpicture}\right)$
\item $\mathrm{Rot}\left(
\begin{minipage}{0.2\linewidth}
\hspace*{0.2\linewidth}
   \rule[2cm]{0.7cm}{-5cm}
\begin{tikzpicture}[baseline=-0.65ex, thick, scale=0.8]
\draw (-1, -1.5)node[below]{$s_1$} [->-] to (0, -0.5);
\draw (-0.5, -1.5)node[below]{$s_2$} [->-] to (0, -0.5);
\draw (1, -1.5)node[below]{$s_k$} [->-] to (0, -0.5);
\draw (0, -0.5) [->-] to (0, 0.5) node[above]{$s_1s_2 \cdots s_k$};
\draw (0.1, -1.25) node{$......$};
\end{tikzpicture}
\end{minipage}
\right )=\mathrm{Rot}\left(
\begin{minipage}{0.2\linewidth}
\hspace*{0.2\linewidth}
   \rule[2cm]{0.7cm}{-5cm}
\begin{tikzpicture}[baseline=-0.65ex, thick, scale=0.8]
\draw (-1, -1.5) [->-] node[below]{$s_1$} to (0, -0.5);
\draw (-0.5, -1.5) [->-] node[below]{$s_2$} to (0.25, -0.75);
\draw (1, -1.5) [->-] node[below]{$s_k$} to (0, -0.5);
\draw (0, -0.5) [->-] to (0, 0.5) node[above]{$s_1s_2 \cdots s_k$};
\draw (0.25, -1.5) node{$......$};
\end{tikzpicture}
\end{minipage}
\right )$
\item $\mathrm{Rot} \left(\begin{tikzpicture}[baseline=-0.65ex,thick, scale=0.7]
\draw (0,-1.5) [->-] to (0,-0.5);
\draw (0,-0.5) [->-] to (0,0.5);
\draw (0,-0.5) [->] to (0,1.5);
\draw (0,-0.5) to [out=270, in=270] (1,-0.2) [-<-] to [out=90, in=270] (1,0.2) to [out=90, in=90] (0,0.5);
\draw (-0.3, -1) node {$t$};
\draw (-0.3, 1) node {$t$};
\draw (-0.6, 0) node {$ts$};
\draw (1.3,0) node {$s$};
\end{tikzpicture}\right) =s^{-1}\,\,\mathrm{Rot} \left(\begin{tikzpicture}[baseline=-0.65ex,thick, scale=0.7]
\draw (0,-1.5) [->] to (0,1.5);
\draw (-0.25,0) node {$t$};
\draw (0.25,0) node {};
\end{tikzpicture}\right), \mathrm{Rot}\left(\begin{tikzpicture}[baseline=-0.65ex,thick, scale=0.7]
\draw (0,-1.5) [->-] to (0,-0.5);
\draw (0,-0.5) [->-] to (0,0.5);
\draw (0,-0.5) [->] to (0,1.5);
\draw (0,-0.5) to [out=270, in=270] (-1,-0.2) [-<-] to [out=90, in=270] (-1,0.2) to [out=90, in=90] (0,0.5);
\draw (0.3, -1) node {$t$};
\draw (0.3, 1) node {$t$};
\draw (0.9, 0) node {$ts$};
\draw (-1.3,0) node {$s$};
\end{tikzpicture}\right)=s\,\,\mathrm{Rot} \left(\begin{tikzpicture}[baseline=-0.65ex,thick, scale=0.7]
\draw (0,-1.5) [->] to (0,1.5);
\draw (-0.25,0) node {$t$};
\draw (0.25,0) node {};
\end{tikzpicture}\right).$ 
\item $\mathrm{Rot}\left(\begin{tikzpicture}[baseline=-0.65ex,thick,scale=0.6]
\draw [->-] (0.5,-1.5) node[below]{$ts$}-- (0.5,-0.6);
\draw[->-] (0.5, -0.6) to [out=90, in=270] (0,0) to [out=90,in=270] (0.5,0.6) ;
\draw (0.5, -0.6) [->-] to [out=90, in=270] (1,0) to [out=90,in=270] (0.5,0.6);
\draw [->](0.5,0.6) -- (0.5,1.5) node[above]{$ts$};
\draw (-1, 0) node {$t$};
\draw (1.5, 0) node {$s$};
\end{tikzpicture}\right) =\mathrm{Rot}
\left(\begin{tikzpicture}[baseline=-0.65ex,thick,scale=0.6]
\draw [->](0,-1.5) -- (0, 1.5);
\draw (-0.5 , 0) node {$ts$};
\draw (0.25, 0) node {};
\end{tikzpicture}\right)$
\item $\mathrm{Rot}
\left (\begin{tikzpicture}[baseline=-0.65ex, thick]
\draw (0,-1) [->-] to [out=90,in=270] (0.5,-0.33);
\draw (0.5, -0.33) [->-] to [out=90,in=270] (0.5,0.33);
\draw (0.5, 0.33) [->] to [out=90,in=270] (0,1);
\draw (1,-1) [->-] to [out=90,in=270] (0.5,-0.33);
\draw (1,1) [<-] to [out=270,in=90] (0.5,0.33);
\draw (0, -1.25) node {$t$};
\draw (0.9,-1.25) node {$s$};
\draw (0,1.25) node {$t$};
\draw (1,1.25) node {$s$};
\draw (1,0) node {$ts$};
\end{tikzpicture}\right)=\mathrm{Rot} \left(\begin{tikzpicture}[baseline=-0.65ex, thick, scale=1.2]
\draw (0,-1)  [->]to (0,0.5);
\draw (1,-1)  [->]to (1,0.5);
\draw (0,0.75) node {$t$};
\draw (1,0.75) node {$s$};
\end{tikzpicture}\right)
$ 
\item $\mathrm{Rot} \left(\begin{tikzpicture}[baseline=-0.65ex, thick, scale=1.2]
\draw (0,-0.75)  [->]to (0,0.75) node[above]{$tr^{-1}$};
\draw (1,-0.75)  [->]to (1,0.75) node[above]{$sr$};
\draw (0,-0.25)  [->-]to (1,0.45);
\draw (0,-1) node {$t$};
\draw (1,-1) node {$s$};
\draw (0.5,-0.25) node {$r$};
\end{tikzpicture}\right)=\mathrm{Rot} \left(\begin{tikzpicture}[baseline=-0.65ex, thick, scale=1.2]
\draw (0,-0.75)  [->]to (0,0.75) node[above]{$tr^{-1}$};
\draw (1,-0.75)  [->]to (1,0.75) node[above]{$sr$};
\draw (1,-0.25)  [->-]to (0,0.45);
\draw (0,-1) node {$t$};
\draw (1,-1) node {$s$};
\draw (0.5,-0.25) node {$r^{-1}$};
\end{tikzpicture}\right)=\mathrm{Rot} \left(\begin{tikzpicture}[baseline=-0.65ex, thick, scale=1.2]
\draw (0,-0.75)  [->]to (1,0.75) node[above]{$sr$};
\draw (1,-0.75)  [->]to (0,0.75) node[above]{$tr^{-1}$};
\draw (0,-1) node {$t$};
\draw (1,-1) node {$s$};
\end{tikzpicture}\right)
$
\end{enumerate}
\end{prop}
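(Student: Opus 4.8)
The plan is to establish all seven relations by \emph{local} computation. In each relation the diagrams being compared agree outside a small disk $B$ and differ only by the indicated modification inside $B$. First I would fix the winding numbers of the regions meeting $\partial B$: because the diagrams coincide outside $B$ and the unbounded region always carries the value $1$, these boundary values are the same on both sides, and hence every region, vertex, and crossing lying outside $B$ contributes the identical factor to $\prod_{r}\chi(r)$ and to $\prod_{v}\chi(v)$. All such outside contributions therefore cancel in the ratio of the two rotation numbers, reducing each identity to a comparison of the finitely many local factors inside $B$. In the cases (iv), (v), (vi), where the move creates a new edge and hence a new meridian generator, I would run this comparison inside $H_1(S^3\setminus G';\mathbb{Z})$ using the stated embedding, and use the vertex relation $s_1\cdots s_k=t_1\cdots t_l$ to express the meridian of the new edge through the old generators.

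For the invariance relations (i), (ii), (iii), (v), (vi), the mechanism is uniform: the local move either splits one vertex into a merge vertex, an intermediate edge, and a split vertex (relation (i)), or slides an edge across the separating line (relations (ii), (iii)), or cancels a merge--split pair against parallel strands (relations (v), (vi)); the equality of the two oppositely oriented rung presentations in (vii) is of the same type. In each case I would verify that no region winding number inside $B$ changes except through creation or deletion of regions that are flanked by separating lines and hence excluded from every $\chi(v)$, and that the defining product $\prod_i x_i^{1/2}\prod_j y_j^{1/2}$ of the original vertex factors exactly into the products attached to the new vertices. This bookkeeping is controlled by the Lemma proved above: each half-exponent $\chi(r)^{1/2}$ occurs an even number of times, so after regrouping the two sides agree with no residual square roots.

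The one relation carrying nontrivial content is (iv), the Reidemeister-I curl, which is where I expect the main difficulty. Writing $L$ for the winding number of the region to the left of the strand, the inductive rule determines the region $R$ to its right and the region $M$ enclosed by the curl, and a short computation --- using that the outer side of the loop edge, of meridian $s$, is oriented downward --- gives $M=R\,s^{-1}$. Evaluating $\chi$ at the two new trivalent vertices, the angular sector lying between their two incoming (resp.\ outgoing) edges is $R$ in both cases, so together they contribute $R^{1/2}\cdot R^{1/2}=R$. The ratio of rotation numbers is then the extra region factor $M$ over the extra vertex factor $R$, namely $M/R=s^{-1}$, matching the claim (and $s$ for the mirror curl). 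The genuine obstacle is precisely this orientation-and-geometry bookkeeping: one must correctly decide which of $L$, $R$, $M$ occupies each sector between consecutive edges at the two new vertices, and track the direction of every edge when applying the winding-number rule, since a single misplaced factor of $s$ would corrupt the normalization. I anticipate the crossing comparison in (vii) to be the second most delicate point, since there the rung of two trivalent vertices is traded for an honest crossing; verifying equality requires matching $\chi$ of the crossing --- defined through its south and north corner regions --- against the product of the two vertex contributions, again with careful identification of the corner regions and one use of the vertex relation to reconcile the meridians $r$, $tr^{-1}$, and $sr$ across the two presentations.
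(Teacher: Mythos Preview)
Your proposal is correct and follows the same approach as the paper: the paper's entire proof reads ``The reason is obvious from the definition,'' and what you have written is precisely a careful unpacking of that one-line claim via local comparison of region and vertex contributions. Your detailed computation for relation~(iv), identifying the enclosed region as $M=R\,s^{-1}$ and the two vertex factors as $R^{1/2}$ each, is exactly the content the paper leaves implicit.
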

\begin{proof}
The reason is obvious from the definition.
\end{proof}

\begin{prop}
\label{rotmove1}
The rotation number of a diagram is invariant under moves (II)--(V) in Fig.~\ref{fig:e25}, while its behavior under move I is described as follows.
\begin{eqnarray*}\label{r1}
&&\mathrm{Rot}\left( \begin{tikzpicture}[baseline, thick, scale=0.4]
\draw (1,-1)   -- (0.2,-0.2);
\draw (-1, -1) [->] -- (1, 1) node[above]{$t$};
\draw (-0.2,0.2) --  (-1,1) ;
\draw (-1, 1) arc (90:270:1);
\end{tikzpicture} \right )
=\mathrm{Rot}\left( \begin{tikzpicture}[baseline=-0.65ex, thick, scale=0.4]
\draw (-1,-1)   -- (-0.2,-0.2);
\draw (1, -1) -- (-1, 1) ;
\draw (0.2,0.2) [->] --  (1,1) node[above]{$t$};
\draw (-1, 1) arc (90:270:1);
\end{tikzpicture} \right )
=t\,\mathrm{Rot}\left( \begin{tikzpicture}[baseline=-0.65ex, thick, scale=0.4]
\draw (0, -1) [->] -- (0, 1) node[above]{$t$};
\end{tikzpicture} \right ), \\
&&\mathrm{Rot}\left(\begin{tikzpicture}[baseline=-0.65ex, thick, scale=0.4]
\draw (1,-1)   -- (0.2,-0.2);
\draw (-1, -1) -- (1, 1) ;
\draw (-0.2,0.2)  [->]--  (-1,1)node[above]{$t$} ;
\draw (1,-1) arc (-90:90:1);
\end{tikzpicture} \right )
= \mathrm{Rot}\left( \begin{tikzpicture}[baseline=-0.65ex, thick, scale=0.4]
\draw (-1,-1)   -- (-0.2,-0.2);
\draw (1, -1) [->] -- (-1, 1) node[above]{$t$};
\draw (0.2,0.2)  --  (1,1);
\draw (1,-1) arc (-90:90:1);
\end{tikzpicture} \right )
=t^{-1}\mathrm{Rot}\left( \begin{tikzpicture}[baseline=-0.65ex, thick, scale=0.4]
\draw (0, -1) [->] -- (0, 1) node[above]{$t$};
\end{tikzpicture} \right ).
\end{eqnarray*}
\end{prop}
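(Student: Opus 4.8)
The plan is to treat each move in Figure~\ref{fig:e25} as a modification of the diagram supported in a small disk $D_0$ and to compare $\mathrm{Rot}(D)$ with $\mathrm{Rot}(D')$ directly from Definition~\ref{rotgraph}. The key reduction is that $\chi(r)$ is a well-defined element of $H_1(S^3\setminus G;\mathbb{Z})$, independent of the path used to reach $r$ from the unbounded region. Consequently every region that extends outside $D_0$ keeps its winding number (even if the move splits it into several regions, each inherits that same value from the outside), and every crossing or vertex lying outside $D_0$ contributes identically to both diagrams. Thus the ratio $\mathrm{Rot}(D')\,\mathrm{Rot}(D)^{-1}$ collapses to the product of $\chi(r)$ over the regions created strictly inside $D_0$, times the product of $\chi(v)^{-1}$ over the crossings and vertices created inside $D_0$, corrected by the change in $\chi(v)$ at any vertex whose surrounding regions are altered. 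The lemma preceding Definition~\ref{rotgraph} guarantees that all the $\tfrac12$-powers occurring in these local expressions assemble into honest elements of $H_1(S^3\setminus G;\mathbb{Z})$, so each quantity below is well defined.

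I would treat move~I first, as it carries the quantitative content. For the first (left-bulging) curl, an Euler-characteristic count shows the twist creates exactly one new bounded region $L$, the interior of the loop, and one new crossing $v$, while the two regions meeting $\partial D_0$ retain the winding numbers they have for the straight strand. Let $A$ be the region occupying both the north and south corners of $v$, and write $g=\chi(A)$. Applying the inductive winding rule across the edge labelled $t$ gives $\chi(L)=g\,t$, while $\chi(v)=\chi(A)^{1/2}\chi(A)^{1/2}=g$. Hence
\[
\mathrm{Rot}(D')=\chi(L)\,\chi(v)^{-1}\,\mathrm{Rot}(D)=(g\,t)\,g^{-1}\,\mathrm{Rot}(D)=t\,\mathrm{Rot}(D).
\]
For the second (right-bulging) curl the mirror computation yields $\chi(L)=g\,t^{-1}$ and $\chi(v)=g$, giving the factor $t^{-1}$. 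Within each line the two diagrams differ only by over/under information, which the winding numbers ignore, so their rotation numbers agree. This is precisely the multiplicative lift of the classical count in Theorem~\ref{Viroformula}, where a positive or negative curl changes the rotation number by $\pm1$.

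For the invariance under moves~II and III I would argue in the same local spirit. Move~II introduces two crossings $v_1,v_2$ together with a bigon and two new corner regions (while the former middle region, being determined from outside $D_0$, reappears split into two pieces of the same winding number); evaluating the local winding numbers shows that the created region factors exactly cancel $\chi(v_1)\chi(v_2)$, so $\mathrm{Rot}$ is unchanged. Move~III neither creates nor destroys regions or crossings: the three crossings and the affected interior regions are merely rearranged by a winding-number-preserving bijection, so $\prod_r\chi(r)$ and $\prod_v\chi(v)$ are each preserved. Both computations are the multiplicative analogues of the classical invariance of the Whitney index under the second and third Reidemeister moves, and they are compatible with the local identities already recorded in Proposition~\ref{prop1}.

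Moves~IV and V are local near a vertex with incoming edges $s_1,\dots,s_k$ and outgoing edges $t_1,\dots,t_l$, and here the vertex relation $s_1\cdots s_k=t_1\cdots t_l$ in $H_1(S^3\setminus G;\mathbb{Z})$ becomes indispensable. In move~IV a transverse strand is slid across the vertex, crossing either all the outgoing edges or all the incoming edges; the strand's meridian is constant along its passage, each new crossing contributes a geometric mean of the surrounding regional winding numbers, and the two sides agree exactly because the total winding-number change along the two routes around the vertex coincides, which is the content of the vertex relation. In move~V a crossing between two edges on the same side of $L_v$ is inserted or deleted, and the new crossing's factor cancels against the new region together with the adjusted vertex factor $\chi(v)$ (one of the inter-edge regions $x_i$ or $y_j$ being replaced). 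I expect move~IV to be the main obstacle: the number of crossings created differs ($l$ versus $k$), and each $\chi$ is a product of $\tfrac12$-powers taken over all regions around the vertex except the two met by $L_v$, so verifying that these half-powers collapse—via the vertex relation and the lemma preceding Definition~\ref{rotgraph}—into the exact cancellation required is the one step demanding genuine care rather than routine bookkeeping.
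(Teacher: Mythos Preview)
Your approach—computing $\mathrm{Rot}(D')\,\mathrm{Rot}(D)^{-1}$ directly from Definition~\ref{rotgraph} by tallying the regions and crossings created or destroyed in a small disk—is sound, and your move~(I) computation is correct, but it is not the route the paper takes. The paper never unpacks the definition for moves~(II)--(V); instead it treats each crossing as a $(2,2)$-vertex and reduces every Reidemeister move to a sequence of the local identities already recorded in Proposition~\ref{prop1}: move~(II) via (i), (v), (vi); move~(III) via (i), (vii), (ii), (iii), (v); move~(IV) by the same pattern; and move~(I) via (i) and (iv). Your direct method makes the geometric mechanism transparent, while the paper's reduction buys a cleaner handling of moves~(IV) and~(V): the half-power cancellations at the vertex, which you correctly flag as the delicate step, are absorbed into Proposition~\ref{prop1} rather than verified from scratch. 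One point of care your sketch glosses over: whether the ``former middle region'' actually splits into two pieces after the move depends on the global topology of that region—for a connected diagram every bounded complementary region is simply connected and does split, but the unbounded region need not—so a complete version of your argument should note that when no split occurs the region in question has winding number~$1$ and the missing factor is harmless.
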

\begin{proof}
The invariance under move (II) follows from (i), (v) and (vi) of Proposition~\ref{prop1}, as illustrated below.
\begin{eqnarray*}
\mathrm{Rot}\left(\begin{tikzpicture}[baseline=-0.65ex, thick, scale=0.4]
\draw (-1,-1.5)  [->] arc  (-90:90:2);
\draw (1.5,-1.5)[->] arc (-90:-270:2);
\draw (-2,-1.5) node {$s$};
\draw (2.5,-1.5) node {$t$};
\end{tikzpicture}\right ) =
\mathrm{Rot}\left(\begin{tikzpicture}[baseline=-0.65ex,thick,scale=0.6]
\draw [->-] (0.5,-1.2) -- (0.5,-0.6);
\draw[->-] (0.5, -0.6) to [out=90, in=270] (0,0) to [out=90,in=270] (0.5,0.6) ;
\draw (0.5, -0.6) [->-] to [out=90, in=270] (1,0) to [out=90,in=270] (0.5,0.6);
\draw (0.5,0.6) -- (0.5,1.2);
\draw (-0.7, 0) node {$t$};
\draw (1.5, 0) node {$s$};
\draw [->-] (0,-2)-- (0.5,-1.2);
\draw [->-] (1,-2)-- (0.5,-1.2);
\draw [<-] (1,2)-- (0.5,1.2);
\draw [<-] (0,2)-- (0.5,1.2);
\draw (-0.7, 2) node {$s$};
\draw (1.5, 2) node {$t$};
\draw (-0.7, -2) node {$s$};
\draw (1.5, -2) node {$t$};
\end{tikzpicture}\right)=
\mathrm{Rot} \left(\begin{tikzpicture}[baseline=-0.65ex, thick, scale=1.2]
\draw (0,-1)  [->]to (0,0.5);
\draw (1,-1)  [->]to (1,0.5);
\draw (0,0.75) node {$s$};
\draw (1,0.75) node {$t$};
\end{tikzpicture}\right).
\end{eqnarray*}

The invariance under move (III) can be proved as follows. The equalities follow from properties (i), (vii), (ii), (iii), (v) and (i) of Proposition~\ref{prop1} in order:
\begin{eqnarray*}
\mathrm{Rot} \left(\begin{tikzpicture}[baseline=-0.65ex, thick, scale=1]
\draw (-1,-1)  [->]to (1,1) node[above] {$r$};
\draw (1,-1)  [->]to (-1,1)node[above] {$t$}; 
\draw (0.5,-1)  [->]to (0.5,1)node[above] {$s$};
\end{tikzpicture}\right)=\mathrm{Rot} \left(\begin{tikzpicture}[baseline=-0.65ex, thick, scale=0.8]
\draw (-1,-2) -- (0,-0.5) -- (0, 0.5) [->]to(-1, 2);
\draw (1, -2)--(0.5, -1.5) -- (0.5, -1)--(1, -0.5) -- (1, 0.5)-- (0.5, 1)--(0.5, 1.5) [->]to (1,2);
\draw (0, -2)--(0.5, -1.5) -- (0.5, -1)--(0, -0.5) -- (0, 0.5)-- (0.5, 1)--(0.5, 1.5) [->]to (0,2);
\end{tikzpicture}\right)=
\mathrm{Rot} \left(\begin{tikzpicture}[baseline=-0.65ex, thick, scale=0.8]
\draw (-1,-2) -- (0.5, -1);
\draw (-1,2) [<-]-- (0.5, 1);
\draw (1, -2)--(0.5, -1.5) -- (0.5, -1)--(1, -0.5) -- (1, 0.5)-- (0.5, 1)--(0.5, 1.5) [->]to (1,2);
\draw (0, -2)--(0.5, -1.5) -- (0.5, -1)--(0, -0.5) -- (0, 0.5)-- (0.5, 1)--(0.5, 1.5) [->]to (0,2);
\end{tikzpicture}\right)
=\mathrm{Rot} \left(\begin{tikzpicture}[baseline=-0.65ex, thick, scale=0.8]
\draw (-1,-2) -- (0.5, -1.5);
\draw (-1,2) [<-]-- (0.5, 1.5);
\draw (1, -2)--(0.5, -1.5) -- (0.5, -1)--(1, -0.5) -- (1, 0.5)-- (0.5, 1)--(0.5, 1.5) [->]to (1,2);
\draw (0, -2)--(0.5, -1.5) -- (0.5, -1)--(0, -0.5) -- (0, 0.5)-- (0.5, 1)--(0.5, 1.5) [->]to (0,2);
\end{tikzpicture}\right)\\
=\mathrm{Rot} \left(\begin{tikzpicture}[baseline=-0.65ex, thick, scale=0.8]
\draw (-1,-2) -- (0.5, -1);
\draw (-1,2) [<-]-- (0.5, 1);
\draw (1, -2)--(0.5, -1) --(0.5, 1) [->]to (1,2);
\draw (0, -2)--(0.5, -1) --(0.5, 1) [->]to (0,2);
\end{tikzpicture}\right)=\mathrm{Rot} \left(\begin{tikzpicture}[baseline=-0.65ex, thick, scale=0.6]
\draw (-1,-2) -- (0, 0);
\draw (0, 0) [->]--(-1,2)  node[above]{$t$};
\draw (1, -2)--(0, 0) [->]to (1,2) node[above]{$r$};
\draw (0, -2)--(0, 0) [->]to (0,2) node[above]{$s$};
\end{tikzpicture}\right).
\end{eqnarray*}

Similarly, for the other configuration:
\begin{eqnarray*}
\mathrm{Rot} \left(\begin{tikzpicture}[baseline=-0.65ex, thick, scale=1]
\draw (-1,-1)  [->]to (1,1) node[above] {$r$};
\draw (1,-1)  [->]to (-1,1)node[above] {$t$}; 
\draw (-0.5,-1)  [->]to (-0.5,1)node[above] {$s$};
\end{tikzpicture}\right)=
\mathrm{Rot} \left(\begin{tikzpicture}[baseline=-0.65ex, thick, scale=0.6]
\draw (-1,-2) -- (0, 0);
\draw (0, 0) [->]--(-1,2)  node[above]{$t$};
\draw (1, -2)--(0, 0) [->]to (1,2) node[above]{$r$};
\draw (0, -2)--(0, 0) [->]to (0,2) node[above]{$s$};
\end{tikzpicture}\right)
\end{eqnarray*}
Since both sides of move (III) equal the same rotation number, the invariance follows.

The invariance under move (IV) can be established in the same manner, and we omit the detailed proof. The behavior under move (I) follows from (i) and (iv) of Proposition~\ref{prop1}.
\end{proof}

\begin{rem}
Two graph diagrams are regularly homotopic if they are connected by a finite sequence of Reidemeister moves (II), (III) and (IV) in Fig. \ref{fig:e25}. Proposition \ref{rotmove1} shows that the rotation number $\mathrm{Rot}(D)$ is a regular homotopy invariant for transverse graph diagrams. We note that Nikkuni \cite{MR2607409} showed the Wu invariant \cite{MR215305, MR124061} is a complete regular homotopy invariant for graphs, suggesting a potential relationship with our rotation number that merits further investigation. 
\end{rem}

\medskip

\subsection{Relations with existing invariants.}
We now establish connections between our rotation number and several existing invariants.

For an oriented link $L$ in $S^3$, there is a natural homomorphism $\phi: H_1(S^3 \setminus L; \mathbb{Z}) \to \mathbb{Z}$ that sends each oriented meridian of $L$ to the generator $1$. If $D$ is a diagram of $L$, then under this homomorphism, our rotation number $\mathrm{Rot}(D)$, as defined in Definition~\ref{rotgraph}, coincides with the classical rotation number $w(D)$.

\begin{prop}
\label{curve}
Let $D$ be a connected diagram of an oriented link. Then
$$\phi (\mathrm{Rot}(D))=\omega(D).$$
\end{prop}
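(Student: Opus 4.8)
The plan is to apply the homomorphism $\phi$ directly to the defining product for $\mathrm{Rot}(D)$ and to match the result term by term against the two sums in Viro's formula (Theorem~\ref{Viroformula}). Since $D$ is a diagram of a link it has no graph vertices, so $V=\emptyset$ and the product in Definition~\ref{rotgraph} runs only over the regions $r\in F(D)$ and the crossings $v\in X(D)$. As $\phi$ is a homomorphism from the multiplicatively written group $H_1(S^3\setminus L;\mathbb{Z})$ to $(\mathbb{Z},+)$, it converts the product over regions into a sum, the inverse over crossings into a difference, and each half-power $\chi(v)=x^{1/2}y^{1/2}$ into $\tfrac12(\phi(x)+\phi(y))$. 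Thus
\[
\phi(\mathrm{Rot}(D))=\sum_{r\in F(D)}\phi(\chi(r))-\sum_{v\in X(D)}\phi(\chi(v)),
\]
and the task reduces to identifying each $\phi(\chi(r))$ and $\phi(\chi(v))$ with the corresponding classical quantity.

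For the regions I would argue by induction over the cells of the connected diagram. Both the classical winding number and $\phi(\chi(r))$ vanish on the unbounded region (since $\phi(1)=0$), and I claim they obey the same transition rule across an edge. Crossing an oriented edge $e$ multiplies $\chi(r)$ by a factor $t_e^{\pm1}$, and because $\phi(t_e)=1$ the induced change under $\phi$ is $\pm1$, matching the classical rule that the winding number drops by one when an edge is crossed from left to right. The one point demanding care here is the sign: one must verify that the left/right convention for the meridian used in the inductive rule for $\chi(r)$ is aligned with Viro's convention, so that $\phi(\chi(r))$ \emph{decreases} (rather than increases) across an edge traversed left to right. Connectivity of $D$ guarantees that every region is reached from the unbounded one by a chain of such transitions, so $\phi(\chi(r))$ equals the classical winding number of $r$ for all $r$.

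For the crossings the key observation is that the south and north corner regions $x$ and $y$ at any double point carry \emph{equal} classical winding numbers. In Viro's local model the four regions around a double point take the values $a,a+1,a+1,a+2$, with the two copies of $a+1$ sitting precisely at the corners cut off by the two incoming edges and by the two outgoing edges; a short check, transporting the winding number along a path from the south region to the north region, shows that the two strand-crossings contribute $+1$ and $-1$, so $\phi(x)=\phi(y)=a+1$. Hence $\phi(\chi(v))=\tfrac12(\phi(x)+\phi(y))=a+1$, which is exactly Viro's average $\chi(v)$. Substituting the two identifications into the displayed formula and invoking Theorem~\ref{Viroformula} gives $\phi(\mathrm{Rot}(D))=\sum_{r}\chi(r)-\sum_{v}\chi(v)=w(D)=\omega(D)$.

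I expect the main obstacle to be bookkeeping rather than anything conceptual: pinning down the orientation and left/right convention so that the meridian factor $t_e$ maps under $\phi$ to $+1$ exactly where Viro's rule subtracts $1$, and confirming that every link-diagram crossing is genuinely transverse in the required sense (two adjacent incoming ends and two adjacent outgoing ends, which always holds because each strand passes straight through the double point) so that the designations ``south'' and ``north'' region are well defined and their winding numbers coincide. Both of these reduce to inspecting the single local model, after which the global identity follows formally from Viro's formula.
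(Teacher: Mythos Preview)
Your proposal is correct and is essentially the same approach as the paper's: the paper's proof reads in its entirety ``This is evident from Viro's formula in Theorem~\ref{Viroformula},'' and what you have written is precisely the unpacking of that sentence --- applying $\phi$ term by term to Definition~\ref{rotgraph}, checking that $\phi\circ\chi$ on regions and on double points reproduces the classical winding-number data, and then invoking Theorem~\ref{Viroformula}. Your care with the sign convention and with the identification of the north/south corners is exactly the bookkeeping the one-line proof suppresses.
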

\begin{proof}
This is evident from Viro's formula in Theorem \ref{Viroformula}.
\end{proof}


More generally, for any graph $G$, let $E$ denote its edge set. Given an abelian group $M$, an \textit{$M$-coloring} of $G$ is a map 
$c: E\to M$ such that for each vertex $v$,
$$\sum_{\text{$e$: pointing into $v$}} c(e)=\sum_{\text{$e$: pointing out of $v$}} c(e).$$
Such a coloring induces a homomorphism $\phi_c: H_1(S^3\setminus G; \mathbb{Z})\to M$ that sends the oriented meridian $t_e$ of each edge $e$ to $c(e)$.

When $G$ is an oriented transverse graph without sinks or sources and $c$ takes positive integer values, the pair $(G, c)$ is called an \textit{MOY graph}. These graphs, central to the MOY calculus \cite{MR1659228}, will be essential in relating our invariant to Viro's polynomial later in this paper. Note that while general colorings use arbitrary abelian groups $M$, MOY graphs specifically require $M = \mathbb{Z}$ with positive integer colors.

Given an MOY graph diagram $(D, c)$, we construct an oriented link diagram $L_{(D, c)}$ by the local transformation illustrated in Fig. \ref{fig:e24}, where we replace each edge with color $i$ by $i$ parallel strands. 

\begin{figure}[h!]
\begin{tikzpicture}[baseline=-0.65ex, thick, scale=0.8]
\draw (0,-1)  [->]-- (0,1);
\draw (0.3, 0.8) node {$4$} ;
\draw (1, 0) node {$\Longrightarrow$} ;
\draw (2,-1)  [->]-- (2,1);
\draw (2.5,-1)  [->]-- (2.5,1);
\draw (3,-1)  [->]-- (3,1);
\draw (3.5,-1)  [->]-- (3.5,1);
\end{tikzpicture}\quad \quad\quad
\begin{tikzpicture}[baseline=-0.65ex, thick, scale=0.8]
\draw (0,-1)  [->-]-- (0,0);
\draw (0,0)  [->]-- (1,1);
\draw (0,0)  [->]-- (-1,1);
\draw (1.3, 0.8) node {$3$} ;
\draw (-1.3, 0.8) node {$1$} ;
\draw (0.3, -0.8) node {$4$} ;
\end{tikzpicture}$\Longrightarrow$
\begin{tikzpicture}[baseline=-0.65ex, thick, scale=0.8]
\draw (1.2,-1) to [out=90, in=270] (1.2,0) [->] to [out=90,in=225] (2,1);
\draw (0.7,-1) to [out=90, in=270] (0.7,0) [->] to [out=90,in=225] (1.5,1);
\draw (-0.3,-1) to [out=90, in=270] (-0.3,0) [->] to [out=90,in=315] (-1.1,1);
\draw (0.2,-1) to [out=90, in=270] (0.2,0) [->] to [out=90,in=225] (1,1);
\end{tikzpicture}
	\caption{Constructing the link $L_{(D, c)}$ from an MOY  graph diagram $(D, c)$.}
	\label{fig:e24}
\end{figure}
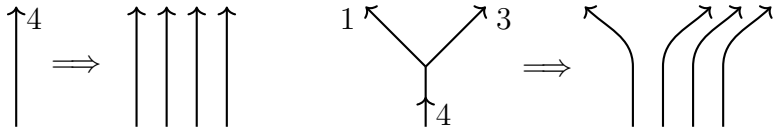

The following proposition relates our rotation number to the classical rotation number of the associated link:

\begin{prop}
\label{rotrelation}
For an MOY graph diagram $(D, c)$, we have
$$\phi_c(\mathrm{Rot}(D))=w(L_{(D, c)}).$$
\end{prop}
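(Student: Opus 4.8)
The plan is to evaluate both sides through Viro's formula and match them term by term. Applying the homomorphism $\phi_c$ to the defining product of Definition~\ref{rotgraph} turns it into the additive expression
\[
\phi_c(\mathrm{Rot}(D)) = \sum_{r \in F(D)} \phi_c(\chi(r)) - \sum_{v \in V \cup X(D)} \phi_c(\chi(v)),
\]
since $\phi_c$ is a homomorphism into the additive group $\mathbb{Z}$ and $c(e)=\phi_c(t_e)$. On the other side, $w(L_{(D,c)})$ is the classical rotation number of the cabled link diagram, which by Theorem~\ref{Viroformula} is computed from the regions and double points of $L_{(D,c)}$ (applied to each connected planar component and summed). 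The goal is to identify these two quantities.

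First I would match winding numbers. Away from the vertices and crossings, each edge $e$ is replaced by $c(e)$ coherently oriented parallel strands, so crossing the whole bundle changes the classical winding number by $\pm c(e)$, which is exactly the integer $\phi_c(t_e^{\pm 1})$ by which $\phi_c(\chi)$ changes across $e$ in $D$; since both unbounded regions receive $0$, every region of $L_{(D,c)}$ descending from a region $r$ of $D$ carries the classical winding number $\phi_c(\chi(r))$. The remaining regions of $L_{(D,c)}$ are the thin slivers between consecutive parallel strands of an edge, together with the small regions created inside each vertex and crossing tangle, and the same $\pm 1$ rule fixes their winding numbers as the intermediate integers.

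It then remains to show that the extra regions, the extra crossings, and the defect of Viro's formula reorganize, tangle by tangle, into the vertex terms $\sum_v \phi_c(\chi(v))$. For a crossing of an $i$-cable with a $j$-cable this is the $i\times j$ grid of double points, where the intermediate sliver regions telescope against the grid crossings to leave precisely $\tfrac12(\phi_c(x)+\phi_c(y))=\phi_c(x^{1/2}y^{1/2})$, the south/north average already present in the single-crossing case of Proposition~\ref{curve}. For an honest vertex I would use the canonical local picture of Fig.~\ref{fig:e24}: the incoming bundles fan out to the outgoing bundles, and I must verify that the alternating count of newly created crossings and regions collapses to $\phi_c(\prod_i x_i^{1/2}\prod_j y_j^{1/2})=\tfrac12\sum_i\phi_c(x_i)+\tfrac12\sum_j\phi_c(y_j)$, the half-weights reflecting that each inter-bundle region is shared between the two bundles flanking it.

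The main obstacle is twofold and concentrated at the vertices. First, the fan-out tangle at a vertex with several incoming and outgoing bundles has a genuinely two-dimensional combinatorial structure, and one must check that its internal contribution telescopes to the symmetric half-sum $\chi(v)$ independently of the routing; I would handle this by applying Viro's formula to a capped-off standard model of the tangle and inducting on the number of bundles, reducing via the elementary identities of Proposition~\ref{prop1} to the basic split and merge vertices. Second, and this is the genuinely new point compared with the link case, the cabling can disconnect the diagram: a planar-connected $D$ may yield an $L_{(D,c)}$ with several planar components, as already happens for the simplest doubled bigon, so Viro's connected formula cannot be applied to $L_{(D,c)}$ as a whole. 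I expect the cleanest remedy is to compute $w(L_{(D,c)})$ instead as a genuine total-turning integral, which is manifestly additive and local: the turning along each bundle equals $c(e)$ times the turning of $e$, and the turning within each vertex tangle is exactly what the half-integer weights $\chi(v)$ encode, so that the connectivity of the diagram never enters. Reconciling the Viro-theoretic bookkeeping with this total-turning computation, and in particular showing that the vertex half-weights absorb the component-counting defect, is the crux of the argument.
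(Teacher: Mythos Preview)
Your approach is genuinely different from the paper's and considerably more laborious. The paper observes that the cabling procedure $(D,c)\mapsto L_{(D,c)}$ can itself be realized as a finite sequence of the local moves in Proposition~\ref{prop1}: split each $i$-colored edge into $i$ parallel strands via repeated applications of~(v), then dissolve the resulting vertices using~(ii), (iii), and~(vi). Since each of these moves preserves $\mathrm{Rot}$ (with the identifications of homology groups already built into the statement of Proposition~\ref{prop1}), one obtains $\mathrm{Rot}(D)=\mathrm{Rot}(L_{(D,c)})$ directly, and the result follows at once from Proposition~\ref{curve}. That is the entire proof.

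Your direct term-by-term comparison of Viro's formula on the two diagrams could be made to work, but the very obstacles you flag---the bookkeeping inside vertex fan-out tangles and the possible planar disconnection of $L_{(D,c)}$---are exactly what the invariance-under-moves argument sidesteps. Indeed, your own proposed remedy for the vertex case, ``reducing via the elementary identities of Proposition~\ref{prop1} to the basic split and merge vertices,'' is the paper's strategy applied locally rather than globally: once you grant that those moves preserve $\mathrm{Rot}$, there is no need to match regions and double points by hand at all, and the connectivity issue never arises, because the equality $\mathrm{Rot}(D)=\mathrm{Rot}(L_{(D,c)})$ is established move-by-move before any appeal to Theorem~\ref{Viroformula}. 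What your route would buy, if carried through, is an explicit combinatorial unpacking of \emph{why} the half-weights at vertices are the right normalization; the paper's route trades that insight for brevity.
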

\begin{proof}
The transformation from $(D, c)$ to $L_{(D,c)}$ is achieved through moves from Proposition~\ref{prop1}: splitting edges via (v), contracting edges via (ii) and (iii), and resolving vertices via (vi). Since these moves preserve rotation number, we have $\mathrm{Rot}(D) = \mathrm{Rot}(L_{(D,c)})$. Then the result follows from Proposition~\ref{curve}.
\end{proof}

\begin{rem}
This result connects our rotation number to the \textit{curliness} $\mathcal{C}(D, c) = t^{w(L_{(D, c)})/2}$ defined in \cite{MR4090586}. Specifically, we have $\mathcal{C}(D, c) = t^{\phi_c(\mathrm{Rot}(D))/2}$, showing that our rotation number generalizes the exponent in the curliness invariant.
\end{rem}

\section{A multi-variable Alexander polynomial}
\subsection{Kauffman states}
We recall the definition of Kauffman state from \cite[Definition 2.4]{MR4090586}.
Suppose $D$ is a connected diagram of a transverse graph. We can obtain a {\it decorated diagram} $(D,\delta)$ by placing a base point $\delta$ on an edge of $D$ and drawing a circle around each vertex of $D$.  Then we define

\begin{enumerate}
\item  $\operatorname{Cr}(D)$: the set of crossings, including crossings of types \diaCrossP and \diaCrossN which are double points of the diagram and type \diaCircle which are intersection points around each vertex between incoming edges and the circle.\\
\item $\operatorname{Re}(D)$: the set of regions, including the {\it regular regions} of $\mathbb{R}^{2}$ separated by $D$ and the {\it circle regions} around the vertices. The regions adjacent to the base point $\delta$ are called {\it marked regions}; the others are {\it unmarked regions}.\\

\item Corners: For a crossing of type \diaCrossP or \diaCrossN, there are four corners: {\it north}, {\it south}, {\it west}, and {\it east}. For a crossing of type \diaCircle, there are three corners: the one inside the circle region is the {\it north} corner, the one on the left is the {\it west} corner and the one on the right is the {\it east} corner.  Every corner belongs to a unique region in $\operatorname{Re}(D)$.  \\
\begin{figure}[h!]
\begin{tikzpicture}[baseline=-0.65ex, thick, scale=0.9]
\draw (-1,-1) [->] to (1,1);
\draw (1.3, 1) node {$i$};
\draw (1,-1) -- (0.2,-0.2);
\draw (-0.2,0.2) [->] to (-1,1);
\draw (0, 0.5) node {N};
\draw (0, -0.5) node {S};
\draw (0.5, 0) node {E};
\draw (-0.5, 0) node {W};
\end{tikzpicture}\hspace{2cm}
\begin{tikzpicture}[baseline=-0.65ex, thick, scale=0.9]
\draw (0, 0.5) ellipse (1.5cm and 0.8cm);
\draw (0,-1) [->-] to (0,-0.3);
\draw (-0.4, -0.6) node {W};
\draw (0.4, -0.6) node {E};
\draw (0, 0) node {N};
\end{tikzpicture}
\caption{Corner labels for crossings of type \diaCrossP (left) and \diaCircle (right).}
\label{fig:corners}
\end{figure}

\end{enumerate}

There are two marked regions, denoted by $R_u$ and $R_v$.  
The assumption that each edge of $G$ has a non-trivial meridian in homology ensures that $R_u \neq R_v$.
Moreover, if $D$ is connected (as a diagram in $\mathbb{R}^2$), then
\[
\lvert \operatorname{Re}(D) \rvert = \lvert \operatorname{Cr}(D) \rvert + 2.
\]

A {\it Kauffman state}, or simply a {\it state}, of a decorated diagram $(D, \delta)$ is a bijection
\[
s: \operatorname{Cr}(D) \longrightarrow \operatorname{Re}(D) \setminus \{R_u, R_v\},
\]
which assigns to each crossing in $\operatorname{Cr}(D)$ one of its adjacent corners.  Since each corner belongs to a unique region, this is equivalent to assigning to each crossing an adjacent region via the specific corner. 
We denote by $S(D,\delta)$ the set of all such states.

As an illustration, Fig.~\ref{fig:e5state1} exhibits a decorated diagram and one of its Kauffman states.

\begin{figure}[h!]
\begin{tikzpicture}[baseline=-0.65ex, thick, scale=1.3]
\draw (0,-1)  to [out=90,in=270] (0.5,-0.33);
\draw (0,-1) to [out=270,in=180] (1.5,-2);
\draw (1.5,-2) to [out=0,in=0] (1.5,1);
\draw (0.5, -0.33) [->-] to [out=90,in=270] (0.5,0.33);
\draw (0.5, 0.33) [->] to [out=90,in=0] (-0.5,1);
\draw (1,-1)  to [out=90,in=270] (0.5,-0.33);
\draw (1,-1) to [out=270,in=45] (0.6,-1.6);
\draw (0.3,-1.8) to [out=225,in=0] (-0.5,-2);
\draw (-0.5,-2) to [out=180,in=180] (-0.5,1);
\draw (1.5,1) [<-] to [out=180,in=90] (0.5,0.33);
\draw (0.5, -0.33) node[circle,fill,inner sep=1pt]{};
\draw (0.5, -0.33) circle (0.3);
\draw (0.5, 0.33) node[circle,fill,inner sep=1pt]{};
\draw (0.5, 0.33) circle (0.3);
\draw (-1.35,0) node {$*$};
\draw (-1,0) node {$\delta$};
\draw (-0.5,-0.5) node {$\star$};
\draw (-1.7,-0.5) node {$\star$};
\draw (0.8,-0.6) node {$\bullet$};
\draw (0.5,0.15) node {$\bullet$};
\draw (0.45,-0.5) node {$\bullet$};
\draw (0.45,-1.5) node {$\bullet$};
\end{tikzpicture}
	\caption{A decorated diagram with four regular regions, two circle regions, one crossing of type \diaCrossN, and three crossings of type \diaCircle. The marked regions adjacent to $\delta$ are indicated by $\star$, and a Kauffman state is indicated by $\bullet$'s.}
	\label{fig:e5state1}
\end{figure}
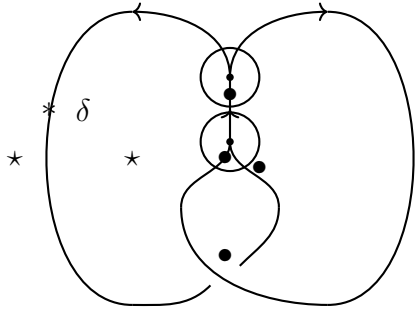

\medskip

\subsection{Kauffman state sum}
The aim here is to define a value $\langle D\rangle$, which is the multi-variable version of the Kauffman state sum defined in \cite{MR4090586}. 

\begin{defn}
\rm
Suppose the base point $\delta$ lies on an edge with oriented meridian $t$, and the winding numbers of the adjacent marked regions are $x$ and $xt$, respectively. Define
\begin{equation*}\label{delta}
\vert \delta \vert=x-xt.
\end{equation*}
Note that since all edges have non-trivial homology, we have $\vert \delta \vert \neq 0 \in \mathbb{Z}H_1(S^3\setminus $G$; \mathbb{Z})$.
\end{defn}

\begin{defn}
\rm
Suppose $(D, \delta)$ is a connected decorated diagram with $N$ crossings $C_1, C_2, \cdots, C_N$ in $\operatorname{Cr}(D)$ and $N+2$ regions $R_1, R_2, \cdots, R_{N+2}$ in $\operatorname{Re}(D)$. 
\begin{enumerate}
\item Define the local contributions $M_{C_p}^{\triangle}$ and $A_{C_p}^{\triangle}$ associated to each corner $\triangle$ around the crossing $C_p$ as shown in Fig. ~\ref{fig:e1}. 
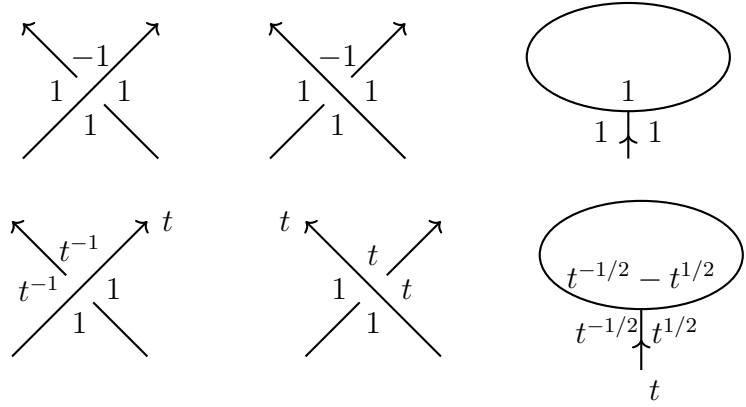
\begin{figure}[h!]
\begin{tikzpicture}[baseline=-0.65ex, thick, scale=0.9]
\draw (-1,-1) [->] to (1,1);
\draw (1,-1) -- (0.2,-0.2);
\draw (-0.2,0.2) [->] to (-1,1);
\draw (0, 0.5) node {$-1$};
\draw (0, -0.5) node {$1$};
\draw (0.5, 0) node {$1$};
\draw (-0.5, 0) node {$1$};
\end{tikzpicture}\hspace{1.3cm}
\begin{tikzpicture}[baseline=-0.65ex, thick, scale=0.9]
\draw (1,-1) [->] to (-1,1);
\draw (-1,-1) -- (-0.2,-0.2);
\draw (0.2,0.2) [->] to (1,1);
\draw (0, 0.5) node {$-1$};
\draw (0, -0.5) node {$1$};
\draw (0.5, 0) node {$1$};
\draw (-0.5, 0) node {$1$};
\end{tikzpicture}
\hspace{1.3cm}
\begin{tikzpicture}[baseline=-0.65ex, thick, scale=0.9]
\draw (0, 0.5) ellipse (1.5cm and 0.8cm);
\draw (0,-1) [->-] to (0,-0.3);
\draw (-0.4, -0.6) node {$1$};
\draw (0.4, -0.6) node {$1$};
\draw (0, 0) node {$1$};
\end{tikzpicture}

\vspace{5mm}

\begin{tikzpicture}[baseline=-0.65ex, thick, scale=0.9]
\draw (-1,-1) [->] to (1,1);
\draw (1.3, 1) node {$t$};
\draw (1,-1) -- (0.2,-0.2);
\draw (-0.2,0.2) [->] to (-1,1);
\draw (0, 0.6) node {$t^{-1}$};
\draw (0, -0.5) node {$1$};
\draw (0.5, 0) node {$1$};
\draw (-0.6, 0) node {$t^{-1}$};
\end{tikzpicture}\hspace{1cm}
\begin{tikzpicture}[baseline=-0.65ex, thick, scale=0.9]
\draw (1,-1) [->] to (-1,1);
\draw (-1.3, 1) node {$t$};
\draw (-1,-1) -- (-0.2,-0.2);
\draw (0.2,0.2) [->] to (1,1);
\draw (0, 0.5) node {$t$};
\draw (0, -0.5) node {$1$};
\draw (0.5, 0) node {$t$};
\draw (-0.5, 0) node {$1$};
\end{tikzpicture}
\hspace{1cm}
\begin{tikzpicture}[baseline=-0.65ex, thick, scale=0.9]
\draw (0, 0.5) ellipse (1.5cm and 0.8cm);
\draw (0,-1.2) [->-] to (0,-0.3);
\draw (-0.5, -0.6) node {$t^{-1/2}$};
\draw (0.5, -0.6) node {$t^{1/2}$};
\draw (0, 0.2) node {$t^{-1/2}-t^{1/2}$};
\draw (0.2, -1.5) node {$t$};
\end{tikzpicture}
	\caption{Local contributions $M_{C_p}^{\triangle}$ (top) and $A_{C_p}^{\triangle}$ (bottom). Here $t$ denotes the oriented meridian of the nearby edge.}
	\label{fig:e1}
\end{figure}

\item For each state $s\in S(D, \delta)$, define 
$$M(s) :=\prod_{p=1}^{N}M_{C_p}^{s(C_{p})}, \quad
A(s) :=\prod_{p=1}^{N}A_{C_p}^{s(C_{p})}. $$

\item The {\it state sum} is defined as \begin{equation}
\label{alexander}
\langle D\rangle_\delta:=\vert \delta\vert^{-1}\sum_{s\in S(D, \delta)} M(s)\cdot A(s).
\end{equation}
If $S(D, \delta)=\emptyset$, we set $\langle D\rangle =0$. In particular, if the diagram $D$ is disconnected, then $S(D, \delta)=\emptyset$ and $\langle D\rangle =0$.
\end{enumerate}

\end{defn}

\begin{lemma}
\label{skein}
For a choice of the base point $\delta$ which is placed in the same position on both sides, the state sum satisfies the following skein relations. 
\begin{align}
&\left<\begin{tikzpicture}[baseline=-0.65ex, thick, scale=0.5]
\draw (1,-1)   -- (0.2,-0.2);
\draw (-1, -1) [->] -- (1, 1) node[above]{$t$};
\draw (-0.2,0.2) [->] to (-1,1)  node[above]{$s$};
\end{tikzpicture} \right>
  =\frac{-t^{-\frac{1}{2}}s^{-\frac{1}{2}}}{\{\frac{1}{2}\}_t \{\frac{1}{2}\}_{s}}\cdot
\left<
\begin{tikzpicture}[baseline=-0.65ex, thick, scale=0.5]
\draw (0,-2) node[below]{$t$} [->-] to (0, 0);
\draw (0,0) --  (0, 1);
\draw (0, 1) [->] to (0,2) node[above]{$s$};
\draw (0,1) node[circle,fill,inner sep=1pt]{};
\draw (2,0) node[circle,fill,inner sep=1pt]{};
\draw (2,-2) [->-] node[below]{$s$} to (2,0);
\draw (2,0) [->] to (2,2) node[above]{$t$};
\draw (2,-0) [->-] to (0, 1);
\draw (1, -0.3) node {$st^{-1}$};
\end{tikzpicture}\right>
+ \, \frac{s^{-\frac{1}{2}}}{\{\frac{1}{2}\}_t \{\frac{1}{2}\}_{ts}}\cdot
\left<\begin{tikzpicture}[baseline=-0.65ex, thick]
\draw (0,-1) [->-] to  (0.5,-0.33);
\draw (0.5, -0.33) [->-] to  (0.5,0.33);
\draw (0.5, 0.33) [->] to  (0,1);
\draw (1,-1) [->-] to (0.5,-0.33);
\draw (1,1) [<-] to  (0.5,0.33);
\draw (0, -1.25) node {$t$};
\draw (0.9,-1.25) node {$s$};
\draw (0,1.25) node {$s$};
\draw (1,1.25) node {$t$};
\draw (1,0) node {$ts$};
\draw (0.5,0.33) node[circle,fill,inner sep=1pt]{};
\draw (0.5,-0.33) node[circle,fill,inner sep=1pt]{};
\end{tikzpicture}\right>,\\
&\left<
\begin{tikzpicture}[baseline=-0.65ex, thick, scale=0.5]
\draw (1,-1) [->]  -- (-1,1)  node[above]{$s$};
\draw (-1, -1) to (-0.2, -0.2);
\draw (0.2, 0.2) [->] -- (1, 1) node[above]{$t$};
\end{tikzpicture}\right>
  =\frac{-t^{\frac{1}{2}}s^{\frac{1}{2}}}{\{\frac{1}{2}\}_t \{\frac{1}{2}\}_{s}}\cdot
\left<
\begin{tikzpicture}[baseline=-0.65ex, thick, scale=0.5]
\draw (0,-2) node[below]{$t$} [->-] to (0, 0);
\draw (0,0) --  (0, 1);
\draw (0, 1) [->] to (0,2) node[above]{$s$};
\draw (0,1) node[circle,fill,inner sep=1pt]{};
\draw (2,0) node[circle,fill,inner sep=1pt]{};
\draw (2,-2) [->-] node[below]{$s$} to (2,0);
\draw (2,0) [->] to (2,2) node[above]{$t$};
\draw (2,-0) [->-] to (0, 1);
\draw (1, -0.3) node {$st^{-1}$};
\end{tikzpicture}\right>
+ \, \frac{s^{\frac{1}{2}}}{\{\frac{1}{2}\}_t \{\frac{1}{2}\}_{ts}}\cdot
\left<\begin{tikzpicture}[baseline=-0.65ex, thick]
\draw (0,-1) [->-] to  (0.5,-0.33);
\draw (0.5, -0.33) [->-] to  (0.5,0.33);
\draw (0.5, 0.33) [->] to  (0,1);
\draw (1,-1) [->-] to (0.5,-0.33);
\draw (1,1) [<-] to  (0.5,0.33);
\draw (0, -1.25) node {$t$};
\draw (0.9,-1.25) node {$s$};
\draw (0,1.25) node {$s$};
\draw (1,1.25) node {$t$};
\draw (1,0) node {$ts$};
\draw (0.5,0.33) node[circle,fill,inner sep=1pt]{};
\draw (0.5,-0.33) node[circle,fill,inner sep=1pt]{};
\end{tikzpicture}\right>,
\end{align}
where $\{\frac{1}{2}\}_t:=t^{\frac{1}{2}}-t^{-\frac{1}{2}}$.
\end{lemma}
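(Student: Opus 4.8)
The plan is to prove both skein relations as \emph{local} identities. All three diagrams in each relation coincide outside a disk $B$ containing the crossing (resp. its two resolutions), and the base point $\delta$ is placed in the same position outside $B$ on an edge whose meridian is unchanged, so the normalizing factor $|\delta|^{-1}$ is common to all three state sums. The edges created by the resolutions carry meridians $st^{-1}$ and $ts$, which are already determined by $t,s$ through the vertex relations and hence introduce no new homology generators; by the inclusion convention of Proposition~\ref{prop1}, all three state sums therefore lie in the same ring $\mathbb{Z}H_1(S^3\setminus G;\mathbb{Z})$, and each asserted identity becomes a relation among Laurent expressions in $t^{1/2},s^{1/2}$. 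First I would record, from Fig.~\ref{fig:e1}, the four corner weights $M_C^{\triangle}A_C^{\triangle}$ of the positive crossing with over-strand meridian $t$, namely $-t^{-1}$ at $N$ and $1,1,t^{-1}$ at $S,E,W$; and the three corner weights of a type~\diaCircle crossing with incoming meridian $m$, namely $m^{-1/2}$ at $W$, $m^{1/2}$ at $E$, and $m^{-1/2}-m^{1/2}=-\{\frac{1}{2}\}_m$ at $N$. The $N$-corner weight of the circle crossings is precisely the origin of the $\{\frac{1}{2}\}$ factors appearing in the denominators of the skein coefficients.

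Next I would set up an explicit correspondence between the Kauffman states of the crossing diagram and those of its two resolutions. Each resolution replaces the single crossing by two trivalent vertices, hence by three new type~\diaCircle crossings (two at the vertex with two incoming edges, one at the vertex with a single incoming edge) together with new circle, bigon and triangle regions. The central step is to show that, for a state of a resolved diagram to contribute, the bijectivity constraint $s\colon\operatorname{Cr}(D)\to\operatorname{Re}(D)\setminus\{R_u,R_v\}$ forces the three new circle crossings onto prescribed corners, so that the sum over the new crossings collapses to a single product of circle-corner weights; the surviving factor then reproduces, channel by channel, a corner weight of the original crossing. I expect the parallel resolution $D_0$ to match one pair of corners of the crossing and the fusion resolution $D_1$ to match the complementary pair, with the forced $N$-corner circle weights contributing the factors $\{\frac{1}{2}\}_t\{\frac{1}{2}\}_s$ for $D_0$ and $\{\frac{1}{2}\}_t\{\frac{1}{2}\}_{ts}$ for $D_1$. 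Dividing these out, while tracking the overall sign coming from the weight $M_C^{N}=-1$, is exactly what produces the coefficients $\tfrac{-t^{-1/2}s^{-1/2}}{\{\frac{1}{2}\}_t\{\frac{1}{2}\}_s}$ and $\tfrac{s^{-1/2}}{\{\frac{1}{2}\}_t\{\frac{1}{2}\}_{ts}}$, while matching the leftover $W/E$ circle weights $m^{\pm 1/2}$ against the $t^{\pm 1}$ weights of the crossing fixes the monomial prefactors.

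The main obstacle is the localization itself: the four regions meeting the crossing may also be adjacent to other crossings of $D$, so the decomposition of states does not factor naively. What is needed is an honest weight-preserving bijection between the states of the crossing diagram and the disjoint union of the contributing states of $D_0$ and $D_1$, under which the assignment of all crossings lying outside $B$ is carried across unchanged, so that this common external factor cancels from both sides and the identity collapses to the finite local computation of the previous paragraph. The delicate points are verifying that the newly created regions are forced (so that no spurious states arise and no external region is double-counted) and correctly tracking, via the count $|\operatorname{Re}(D)|=|\operatorname{Cr}(D)|+2$, how resolving one crossing into three circle crossings rebalances regions against crossings; here the single-variable argument of \cite{MR4090586} serves as a template, the only change being the replacement of the specialized weights by the meridian-refined weights of Fig.~\ref{fig:e1}. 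Once the bijection and the cancellation of the external factor are in place, relation~(2) for the negative crossing follows by the identical analysis, now reading the corner weights off the \diaCrossN column of Fig.~\ref{fig:e1}, which replaces the monomial prefactors by their mirror images.
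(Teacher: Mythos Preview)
Your proposal is correct and follows essentially the same approach as the paper: the paper's proof is a one-line reference to \cite[Theorem~4.1]{MR4090586}, stating that the relations are proved ``by comparing the Kauffman states and their contributions to $\langle D\rangle$,'' which is precisely the state-by-state bijection and local weight comparison you outline. You have simply unpacked that argument in more detail, correctly identifying that the single-variable template carries over with the meridian-refined weights of Fig.~\ref{fig:e1} in place of the specialized ones.
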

\begin{proof}
These two relations can be proved by comparing the Kauffman states and their contributions to $\langle D \rangle$, using the same method as in \cite[Theorem 4.1]{MR4090586}.
\end{proof}

\begin{rem}
The state sum $\langle D\rangle_\delta$ depends a priori on the choice of base point $\delta$. However, we will prove in Proposition~\ref{initial} that it is independent of this choice. Therefore, in subsequent discussions we will use the simplified notation $\langle D\rangle$, with the understanding that the base point independence has been established.
\end{rem}

\begin{prop}
\label{invtheo}
The state sum $\langle D \rangle $ is invariant under the Reidemeister moves (II) -- (V)  in Fig. \ref{fig:e25}, and its behavior under Reidemeister move (I) is given by
\begin{equation*}\label{r1}
t \left< \begin{tikzpicture}[baseline, thick, scale=0.4]
\draw (1,-1)   -- (0.2,-0.2);
\draw (-1, -1) [->] -- (1, 1) node[above]{$t$};
\draw (-0.2,0.2) --  (-1,1) ;
\draw (-1, 1) arc (90:270:1);
\end{tikzpicture} \right >
=\left< \begin{tikzpicture}[baseline=-0.65ex, thick, scale=0.4]
\draw (1,-1)   -- (0.2,-0.2);
\draw (-1, -1) -- (1, 1) ;
\draw (-0.2,0.2)  [->]--  (-1,1)node[above]{$t$} ;
\draw (1,-1) arc (-90:90:1);
\end{tikzpicture} \right >
=\left< \begin{tikzpicture}[baseline=-0.65ex, thick, scale=0.4]
\draw (-1,-1)   -- (-0.2,-0.2);
\draw (1, -1) -- (-1, 1) ;
\draw (0.2,0.2) [->] --  (1,1) node[above]{$t$};
\draw (-1, 1) arc (90:270:1);
\end{tikzpicture} \right >
=t^{-1} \left< \begin{tikzpicture}[baseline=-0.65ex, thick, scale=0.4]
\draw (-1,-1)   -- (-0.2,-0.2);
\draw (1, -1) [->] -- (-1, 1) node[above]{$t$};
\draw (0.2,0.2)  --  (1,1);
\draw (1,-1) arc (-90:90:1);
\end{tikzpicture} \right >
=\left< \begin{tikzpicture}[baseline=-0.65ex, thick, scale=0.4]
\draw (0, -1) [->] -- (0, 1) node[above]{$t$};
\end{tikzpicture} \right >.
\end{equation*}

\end{prop}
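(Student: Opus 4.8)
The plan is to establish the invariance one move at a time, in each case comparing the Kauffman states of the two diagrams and checking that their weighted contributions to the state sum \eqref{alexander} agree. This follows the method of \cite{MR4090586}, the new feature being that one must keep track of the entire family of meridian variables attached to the regions rather than a single specialized variable. The essential preliminary step is to invoke the base-point independence of Proposition~\ref{initial} to slide $\delta$ onto an edge lying outside the disk in which the move is performed. This reduction does three things at once: it puts $\delta$ in the same position on both sides, which is exactly the hypothesis of the skein relations in Lemma~\ref{skein}; it makes the normalizing factor $|\delta|$ identical for the two diagrams; and it lets us take the marked regions $R_u,R_v$ disjoint from the move region, so that the only change in the set of states is local.

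I would first handle the genuinely graph-theoretic moves (IV) and (V). For move (IV), which slides a strand across a vertex $v$, the crossings of the sliding strand with the edges at $v$ migrate from the outgoing side to the incoming side; the point is that the total meridian seen by the strand is the same on both sides, by the vertex relation $s_1\cdots s_k = t_1\cdots t_l$. Resolving these double points by Lemma~\ref{skein} and matching the resulting states gives a contribution-preserving bijection $S(D,\delta)\cong S(D',\delta)$. For move (V), which alters the valence at $v$ and hence the number of \diaCircle-crossings and circle regions, I would compare states directly and verify that the surplus \diaCircle-weights $t^{-1/2}$, $t^{1/2}$ and $(t^{-1/2}-t^{1/2})$ telescope to $1$; this is compatible with the way these same half-integer powers enter the definition of $\chi(v)$.

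Next I would treat the crossing moves (I), (II), (III) by the skein relations of Lemma~\ref{skein}, which rewrite each double point of type \diaCrossP or \diaCrossN as an explicit combination of its two trivalent smoothings, with coefficients that are monomials and quantum-integer factors $\{\tfrac{1}{2}\}_t,\{\tfrac{1}{2}\}_s$ in the nearby meridians. Resolving every crossing in the move region expands both state sums into signed combinations of state sums of trivalent graph diagrams. For move (II) the four resulting terms pair off: the two ``mixed'' terms carry reciprocal coefficients and cancel, while the surviving terms reassemble into two parallel strands. For move (III) one resolves all three crossings and checks that the two branching expansions coincide after reducing the trivalent pieces via the already-established moves (IV) and (V) together with planar isotopy. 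Move (I) is the simplest case: the curl has a single double point, and comparing the states of the curled diagram with those of the straight strand produces exactly the recorded factor $t^{\pm1}$, the sign being dictated by the writhe of the curl.

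The step I expect to be the main obstacle is the coefficient bookkeeping for move (III). Three crossings generate a branching expansion, and one must verify not merely that the underlying trivalent diagrams on the two sides match, but that the full variable-dependent coefficients---including the half-integer powers $t^{\pm1/2}$ from the $A^{\triangle}$-weights at the \diaCircle-crossings---recombine identically. The delicate point is keeping the winding numbers of the intervening regions consistent across every resolution: in the single-variable setting of \cite{MR4090586} these reduce to powers of one variable and the cancellations are transparent, whereas here each region carries a genuine class in $H_1(S^3\setminus G;\mathbb{Z})$ and every cancellation must respect the vertex relations. Confirming this compatibility is where the bulk of the computation lies.
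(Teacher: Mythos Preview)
Your approach is workable but takes a genuinely different and considerably longer route than the paper's. The paper's proof is a single sentence: it observes that the direct state-sum analysis of \cite[Proposition~3.1]{MR4090586} carries over verbatim, since the local contributions $M_{C_p}^{\triangle}$ and $A_{C_p}^{\triangle}$ in Fig.~\ref{fig:e1} have exactly the same form as in the single-variable case, only with the specialized power $t^{c(e)}$ replaced by the meridian $t_e$. For each move one fixes $\delta$ on an edge outside the move disk, sets up the obvious bijection (or two-to-one correspondence) between states on the two sides, and checks that the products $M(s)A(s)$ match. No skein expansion is needed.

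Your plan instead resolves every crossing via Lemma~\ref{skein} and then tries to match the resulting trees of trivalent diagrams. This is legitimate in principle, but it trades a handful of local state comparisons for a branching computation whose leaves are themselves state sums of auxiliary trivalent graphs (tadpoles, bubbles, thetas), and evaluating those leaves amounts to doing the direct state-sum computation anyway. The move (III) verification you flag as the main obstacle really is substantially harder your way than the paper's way, where it is a routine six-term check.

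Two smaller remarks. First, invoking Proposition~\ref{initial} here is a forward reference: in the paper's order \ref{initial} is proved after \ref{invtheo}. There is no circularity (the proof of \ref{initial} does not use \ref{invtheo}), but you do not actually need it---for each move you may simply \emph{choose} $\delta$ outside the move disk rather than slide an existing one there. Second, your description of move (IV) conflates two strategies: you say you will ``resolve these double points by Lemma~\ref{skein}'' and in the same breath claim ``a contribution-preserving bijection $S(D,\delta)\cong S(D',\delta)$''. Resolving crossings does not give a bijection of states; the bijection comes from the direct comparison, which is the paper's method.
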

\begin{proof}
The invariance under moves (II)--(V) and the change under move (I) follow from the same state-sum analysis as in \cite[Proposition 3.1]{MR4090586}.
\end{proof} 

\medskip

\subsection{A normalized multi-variable Alexander polynomial}
In this section, we use the rotation number $\mathrm{Rot}(D)$ defined in Section 2 to normalize the state sum $\langle D \rangle$ into a topological invariant for framed transverse graphs. This approach generalizes the normalization for MOY graphs given in \cite{MR4090586}. 

Let $G$ be a transverse graph.  
Recall that a {\it framing} of $G$ is an embedded compact surface $F \subset S^3$ such that $G$ is a deformation retract of $F$.  
A {\it framed graph} is a graph equipped with such a framing.  
More concretely, each vertex of $G$ is replaced by a disk in $F$, with the vertex at the center, and each edge of $G$ is replaced by a strip $[0,1] \times [0,1]$, where the sides $[0,1] \times \{0,1\}$ are attached to the boundaries of the adjacent vertex disks, and the edge itself corresponds to $\{\tfrac{1}{2}\} \times [0,1]$.  
It is straightforward to see that a framed transverse graph is equivalent to a ribbon graph in \cite{MR1036112}, where vertices are coupons and edges are annuli.

Every graph diagram of $G$ in $\mathbb{R}^{2}$ carries a natural {\it blackboard framing}, whose projection in $\mathbb{R}^{2}$ is given by the tubular neighborhood of the diagram in the plane.  
From now on, we represent framed transverse graphs by their graph diagrams with blackboard framing.  
For framed transverse graphs, we have the following well-known fact.


\begin{lemma}
Two graph diagrams represent the same framed transverse graph if and only if they are related by a sequence of Reidemeister moves as shown in Fig.~\ref{fig:framed}.
\end{lemma}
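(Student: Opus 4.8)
The plan is to deduce this framed statement from its unframed counterpart---the Reidemeister theorem for transverse graphs recorded at the start of Section~2 (moves (I)--(V) in Fig.~\ref{fig:e25})---together with careful bookkeeping of the blackboard framing. The logical skeleton has two directions. For the ``if'' direction I would check that each move appearing in Fig.~\ref{fig:framed} is realized by an isotopy of the ambient ribbon surface $F$: moves (II)--(V) are already isotopies of the underlying graph, and one verifies locally that the band neighborhood is carried along without acquiring a twist, so the framing is preserved; the framed first move (I$'$), which cancels an adjacent pair of oppositely-curved kinks, corresponds to a band carrying a $+1$ and a $-1$ twist, and these annihilate under a ribbon isotopy. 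Thus every move in Fig.~\ref{fig:framed} preserves the framed-isotopy class.

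For the ``only if'' direction I would track the framing numerically. Assign to each edge $e$ an integer $\mathrm{fr}(e)\in\mathbb{Z}$, the self-writhe recording the number of twists of the band over $e$ in the blackboard framing (the coupons at the vertices being rigid, all twisting is carried by the edges). The point is that, among the unframed moves, (II)--(V) leave every $\mathrm{fr}(e)$ unchanged, whereas a single application of move (I) changes exactly one $\mathrm{fr}(e)$ by $\pm1$. Now suppose $D$ and $D'$ present the same \emph{framed} graph. In particular they present the same underlying transverse graph, so by the unframed theorem they are joined by a finite sequence $\sigma$ of moves (I)--(V); and since $D,D'$ have equal framing, the signed total of the move-(I) applications along $\sigma$ vanishes on each edge.

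The substance of the proof is then to rewrite $\sigma$ using only (II)--(V) and (I$'$). The mechanism is a ``curl mobility'' lemma: a single curl can be slid along a strand, past a crossing, and through a vertex using moves (II), (III), and (IV), (V) respectively, so that any two curls can be brought adjacent, and an adjacent pair of opposite sign is then removed by (I$'$). Since the net twist on each edge is zero, one inserts canceling pairs of curls (each insertion being an instance of (I$'$) read backwards) to pair the move-(I) applications in $\sigma$ into oppositely-signed partners, and cancels them; this converts $\sigma$ into a sequence of framed moves connecting $D$ to $D'$. I expect the main obstacle to be precisely this curl-mobility step at the vertices: pushing a twist off an edge and across a coupon with several incident bands must be done compatibly with the separating line $L_v$ and the transverse orientation, and it is here that moves (IV) and (V) have to be invoked in the right combinations. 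As an independent check (and an alternative route to the whole statement) I would run the Morse-theoretic ``movie'' argument: take a generic ambient isotopy of the ribbon surface $F$ and read off that the elementary events of its projection are exactly the moves of Fig.~\ref{fig:framed}, with the framed move (I$'$) arising because a generic isotopy can create or destroy a canceling pair of twists but never a single net twist.
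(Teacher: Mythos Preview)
The paper does not supply a proof of this lemma; it is simply announced as a ``well-known fact,'' so there is no argument of the authors' to compare against.  Assessing your outline on its own merits, the principal problem is your handling of move~(V).  Figure~\ref{fig:framed} lists only moves (I$'$), (II), (III) and (IV); move~(V) is deliberately omitted.  This is not an oversight: move~(V) swaps two adjacent edges at a vertex, which in the ribbon-graph picture changes the linear order in which the bands attach to the coupon and hence alters the framed isotopy type.  Your ``if'' direction asserts that (II)--(V) all preserve the framing, which is false for~(V).  More seriously, your ``only if'' direction starts from an unframed sequence built out of moves (I)--(V) and explains only how to eliminate the occurrences of move~(I); you say nothing about how to dispose of the move-(V) applications that may appear in~$\sigma$.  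Worse, your curl-mobility mechanism explicitly \emph{invokes} (V) to push a kink through a vertex, so that step is unavailable in the framed setting as stated.

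The Morse-theoretic alternative you sketch in your final sentence is the standard route and avoids this difficulty entirely: one analyses a generic isotopy of the ribbon surface~$F$ and reads off the elementary diagrammatic events directly, obtaining precisely (I$'$), (II), (III), (IV), with~(V) never arising because the coupon is rigid throughout the isotopy.  That should be your primary argument, not a parenthetical check.  If you wish to salvage the deduction-from-the-unframed-theorem approach, you must additionally prove that any sequence of moves (I)--(V) between two diagrams with identical edge-orderings at every vertex can be rewritten without~(V); this is an extra lemma you have not supplied, and its proof is not obviously simpler than the direct movie argument.
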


\begin{figure}[h!]
I': \quad \begin{tikzpicture}[baseline, thick, scale=0.4]
\draw (1,-3)   -- (0.2,-2.2);
\draw (-1, -3) -- (1, -1);
\draw (-0.2,-1.7) --  (-1,-1) ;
\draw (-1, -1) arc (90:270:1);
\draw (1, -1) -- (1, 1) ;
\draw (-1,1)   -- (-0.2,1.7);
\draw (1, 1) -- (-1, 3) ;
\draw (0.2,2.2)  --  (1,3);
\draw (-1, 3) arc (90:270:1);
\end{tikzpicture}\quad  $\longleftrightarrow$ \quad
\begin{tikzpicture}[baseline=-0.65ex, thick, scale=0.4]
\draw (0, -1) -- (0, 1) ;
\end{tikzpicture}\quad $\longleftrightarrow$ \quad
\begin{tikzpicture}[baseline, thick, scale=0.4]
\draw (1,-3)   -- (0.2,-2.2);
\draw (1, 1) arc (-90:90:1);
\draw (-1, -3) -- (1, -1);
\draw (-0.2,-1.7) --  (-1,-1) ;
\draw (-1, -1) -- (-1, 1) ;
\draw (-1,1)   -- (-0.2,1.7);
\draw (1, 1) -- (-1, 3) ;
\draw (0.2,2.2)  --  (1,3);
\draw (1, -3) arc (-90:90:1);
\end{tikzpicture}
\\  \vspace{3mm}
II: \quad \begin{tikzpicture}[baseline, thick, scale=0.4]
\draw (-1, 2) arc (90:270:2);
\draw (-3.5, 2) arc (90:60:2);
\draw (-1.5, 0) arc (0:40:2);
\draw (-1.5, 0) arc (0:-40:2);
\draw (-3.5, -2) arc (270:300:2);
\end{tikzpicture} \quad  $\longleftrightarrow$ \quad
\begin{tikzpicture}[baseline, thick, scale=0.4]
\draw (-1, 2) arc (110:250:2);
\draw (-5, 2) arc (70:-70:2);
\end{tikzpicture} \\  \vspace{3mm}
III: \quad\begin{tikzpicture}[baseline, thick, scale=0.4]
\draw (-2, 2) -- (-0.4, 0.4) ;
\draw (-2, 1) -- (-1.5, 1) ;
\draw (1.5, 1) -- (2, 1) ;
\draw (-0.5, 1) -- (0.5, 1) ;
\draw (0.4, -0.4) -- (2, -2) ;
\draw (2, 2) -- (-2, -2) ;
\end{tikzpicture}\quad  $\longleftrightarrow$ \quad
\begin{tikzpicture}[baseline, thick, scale=0.4]
\draw (-2, 2) -- (-0.4, 0.4) ;
\draw (-2, -1) -- (-1.5, -1) ;
\draw (1.5, -1) -- (2, -1) ;
\draw (-0.5, -1) -- (0.5, -1) ;
\draw (0.4, -0.4) -- (2, -2) ;
\draw (2, 2) -- (-2, -2) ;
\end{tikzpicture}\quad\quad\quad
\begin{tikzpicture}[baseline, thick, scale=0.4]
\draw (-2, 2) -- (2, -2) ;
\draw (-2, 1) -- (-1.5, 1) ;
\draw (1.5, 1) -- (2, 1) ;
\draw (-0.5, 1) -- (0.5, 1) ;
\draw (0.4, -0.4) -- (2, -2) ;
\draw (2, 2) -- (0.4, 0.4) ;
\draw (-2, -2) -- (-0.4, -0.4) ;
\end{tikzpicture}\quad  $\longleftrightarrow$ \quad
\begin{tikzpicture}[baseline, thick, scale=0.4]
\draw (-2, 2) -- (2, -2) ;
\draw (-2, -1) -- (-1.5, -1) ;
\draw (1.5, -1) -- (2, -1) ;
\draw (-0.5, -1) -- (0.5, -1) ;
\draw (2, 2) -- (0.4, 0.4) ;
\draw (-2, -2) -- (-0.4, -0.4) ;
\end{tikzpicture}\\\vspace{4mm}
IV: \quad \begin{tikzpicture}[baseline=-0.65ex, thick, scale=1]
\draw (-1, -1) -- (0, 0);
\draw (-0.5, -1) -- (0, 0);
\draw (0, 0) -- (1, 1);
\draw (1, -1) -- (0, 0);
\draw (0, 0) -- (-1,1);
\draw (0, 0) -- (0.5, 1);
\draw (0, 0) node[circle,fill,inner sep=1.5pt]{};
\draw [dashed] (-0.7, 0)--(0.7, 0);
\draw (0.1, -0.5) node{$...$};
\draw (-0.1, 0.5) node{$...$};
\draw (-1.2, 0.7) -- (-0.9, 0.7);
\draw (0.9, 0.7) -- (1.2, 0.7);
\draw (0.65, 0.7) -- (0.45, 0.7);
\draw (-0.6, 0.7) -- (0.2, 0.7);
\end{tikzpicture}  $\longleftrightarrow$
\begin{tikzpicture}[baseline=-0.65ex, thick, scale=1]
\draw (-1, -1) --  (0, 0);
\draw (-0.5, -1) -- (0, 0);
\draw (0, 0) -- (1, 1);
\draw (1, -1)--  (0, 0);
\draw (0, 0) --  (-1,1);
\draw (0, 0) --  (0.5, 1);
\draw (0, 0) node[circle,fill,inner sep=1.5pt]{};
\draw [dashed] (-0.7, 0)--(0.7, 0);
\draw (0.1, -0.5) node{$...$};
\draw (-0.1, 0.5) node{$...$};
\draw (-1.2, -0.7) -- (-0.9, -0.7);
\draw (0.9, -0.7) -- (1.2, -0.7);
\draw (-0.65, -0.7) -- (-0.45, -0.7);
\draw (0.6, -0.7) -- (-0.2, -0.7);
\end{tikzpicture}\quad\quad
\begin{tikzpicture}[baseline=-0.65ex, thick, scale=1]
\draw (-1, -1) --  (0, 0);
\draw (-0.5, -1) -- (0, 0);
\draw (0, 0) -- (0.6, 0.6);
\draw (0.8, 0.8) --  (1, 1);
\draw (1, -1) --  (0, 0);
\draw (0, 0) -- (-0.6,0.6);
\draw (-0.8, 0.8) --  (-1,1);
\draw (0, 0) -- (0.3, 0.6);
\draw (0.4, 0.8) --  (0.5, 1);
\draw (0, 0) node[circle,fill,inner sep=1.5pt]{};
\draw [dashed] (-0.7, 0)--(0.7, 0);
\draw (0.1, -0.5) node{$...$};
\draw (-0.1, 0.5) node{$...$};
\draw (-1.2, 0.7) -- (1.2, 0.7);
\end{tikzpicture}  $\longleftrightarrow$
\begin{tikzpicture}[baseline=-0.65ex, thick, scale=1]
\draw (-1, -1) -- (-0.8, -0.8);
\draw (-0.6, -0.6) --  (0, 0);
\draw (-0.5, -1) -- (-0.4, -0.8);
\draw (-0.3, -0.6) --  (0, 0);
\draw (0, 0) -- (1, 1);
\draw (0.6, -0.6) -- (0, 0);
\draw (1, -1) -- (0.8, -0.8);
\draw (0, 0) -- (-1,1);
\draw (0, 0) -- (0.5, 1);
\draw (0, 0) node[circle,fill,inner sep=1.5pt]{};
\draw [dashed] (-0.7, 0)--(0.7, 0);
\draw (0.1, -0.5) node{$...$};
\draw (-0.1, 0.5) node{$...$};
\draw (-1.2, -0.7) -- (1.2, -0.7);
\end{tikzpicture}
\caption{Reidemeister moves for framed transverse graph diagrams.}
\label{fig:framed}
\end{figure}
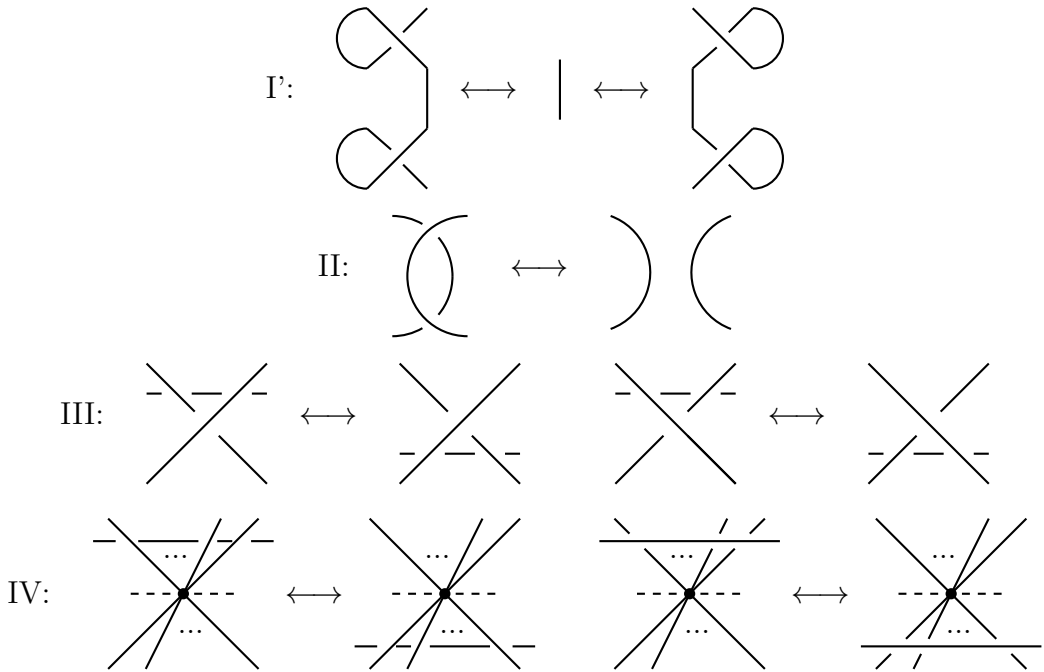

Our normalization idea is simple and natural: we construct a factor that cancels the change in the state sum arising from Reidemeister move (I') in the framed setting (which corresponds to move (I) in the unframed setting), while preserving invariance under the other moves. 
Henceforth, we denote by blackboard-bold letters $\mathbb{G}$ and $\mathbb{D}$ a framed transverse graph and its diagram, respectively, and by $G$ and $D$ the corresponding graph and diagram which disregard the framing.

\begin{defn}\label{def:normalizedAlexanderpolynomial}
\rm
For a framed transverse graph diagram $\mathbb{D}$, define the {\it normalized Alexander polynomial} by
\begin{equation}
\Delta_{\mathbb{D}}:=-\mathrm{Rot}(D)^{1/2}\cdot \langle D\rangle.
\end{equation}
Algebraically, this invariant takes values in the group ring $\mathbb{Z}[H_1(S^3\setminus G; \mathbb{Z})]$ (extended by formal square roots of the generators).
If we choose a set of generators $t_1, t_2, \ldots, t_k$ of $H_1(S^3\setminus G; \mathbb{Z})$ corresponding to the oriented meridians of some edges of $G$,  
then $\Delta_{\mathbb{D}}$ becomes a Laurent polynomial in $t_1^{1/2}, t_2^{1/2}, \ldots, t_k^{1/2}$, denoted $\Delta_{\mathbb{D}}(t_1, t_2, \ldots, t_k)$.
\end{defn}

\begin{rem}
The reason for inserting a minus sign in the definition of $\Delta_{\mathbb{D}}$ is to ensure that the trivial knot, with oriented meridian $t$, satisfies $\Delta_{\mathbb{D}}=\frac{1}{t^{1/2}-t^{-1/2}}$. 
\end{rem}

The topological invariance of $\Delta_{\mathbb{D}}$ will be established in Section 4. Once proven, we denote this invariant by $\Delta_{\mathbb{G}}$ to indicate its independence from the choice of diagram representing the framed transverse graph $\mathbb{G}$.
\medskip

\subsection{Specialization to MOY graphs}

For an MOY graph $(G, c)$, there exists a homomorphism 
$
\phi_c: H_1(S^3\setminus G; \mathbb{Z}) \rightarrow \mathbb{Z}
$
that sends the oriented meridian of each edge $e$ to $c(e)$.  
The state sum $\langle D, c \rangle$ defined in \cite[Definition~2.11]{MR4090586} is a one-variable Laurent polynomial in $t^{1/2}$, obtained from $\langle D \rangle$ by applying $\phi_c$ to every element of $H_1(S^3\backslash G; \mathbb{Z})$.  

More precisely, we extend $\phi_c$ to a ring homomorphism $$\Phi_c: \mathbb{Z}H_1(S^3\setminus G; \mathbb{Z}) \to \mathbb{Z}[t^{\pm 1}]$$ as follows:
For any element $s \in H_1(S^3\setminus G; \mathbb{Z})$ with $\phi_c(s) = i \in \mathbb{Z}$, we define $\Phi_c(s) = t^i$. This definition extends linearly to the entire group ring. Under this extension, each occurrence of $s^{\pm 1/2}$ in $\langle D \rangle$ is mapped to $t^{\pm i/2}$ in $\langle D, c \rangle$.
We thus have:
\begin{equation}
\label{sumrelation}
\langle D, c \rangle = \Phi_c(\langle D \rangle).
\end{equation}
In particular, if $\langle D, c \rangle \neq 0$, then  $\langle D \rangle \neq 0$.

For a trivalent MOY graph diagram $(\mathbb{D}, c)$ equipped with a blackboard framing, we defined in \cite[Definition 3.7]{MR4090586} the normalized Alexander polynomial by
\[
\Delta_{(\mathbb{G}, c)}(t) := \frac{\langle D, c \rangle}{(t^{-1/2} - t^{1/2})^{|V|-1}} \cdot \mathcal{C}(D, c),
\]
where $|V|$ denotes the number of vertices of $\mathbb{D}$.  Note that in \cite[Definition 3.4]{MR4090586} we introduced a factor $\mathcal{F}(\mathbb{D}, c)$ to account for twists in the diagram. However, since we only consider blackboard framings here, this factor can be omitted. 

\begin{prop}
\label{relation1}
For a framed trivalent MOY graph $(\mathbb{G}, c)$, let $\Phi_c$ be the ring homomorphism defined above. Then 
\begin{equation}
\Phi_c(\Delta_{\mathbb{G}})=\Delta_{(\mathbb{G}, c)}(t)\cdot (t^{1/2}-t^{-1/2})^{|V|-1}.
\end{equation}
\end{prop}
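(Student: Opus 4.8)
The plan is to unwind both sides of the claimed identity using the definitions assembled in the excerpt and reduce the statement to a comparison of three elementary factors: the state sum, the rotation-number normalization, and the curliness $\mathcal{C}(D,c)$. First I would recall that by Definition~\ref{def:normalizedAlexanderpolynomial} we have $\Delta_{\mathbb{G}} = -\mathrm{Rot}(D)^{1/2}\cdot\langle D\rangle$, so that applying the ring homomorphism $\Phi_c$ gives
\[
\Phi_c(\Delta_{\mathbb{G}}) = -\,\Phi_c\bigl(\mathrm{Rot}(D)^{1/2}\bigr)\cdot\Phi_c(\langle D\rangle).
\]
The second factor is immediate from the specialization formula \eqref{sumrelation}, namely $\Phi_c(\langle D\rangle)=\langle D,c\rangle$, where I must be careful that $\Phi_c$ sends each formal square root $s^{1/2}$ with $\phi_c(s)=i$ to $t^{i/2}$, so that it commutes with the square-root extension used in defining $\mathrm{Rot}(D)^{1/2}$.

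The heart of the matter is the rotation-number factor. By Proposition~\ref{rotrelation} we have $\phi_c(\mathrm{Rot}(D)) = w(L_{(D,c)})$, and since $\Phi_c$ sends an element $s$ with $\phi_c(s)=i$ to $t^{i}$, it follows that $\Phi_c(\mathrm{Rot}(D)^{1/2}) = t^{w(L_{(D,c)})/2}$. By the remark immediately after Proposition~\ref{rotrelation}, this is exactly the curliness $\mathcal{C}(D,c)=t^{w(L_{(D,c)})/2}=t^{\phi_c(\mathrm{Rot}(D))/2}$. Substituting these two computations yields
\[
\Phi_c(\Delta_{\mathbb{G}}) = -\,\mathcal{C}(D,c)\cdot\langle D,c\rangle.
\]
It remains to match this against the right-hand side $\Delta_{(\mathbb{G},c)}(t)\cdot(t^{1/2}-t^{-1/2})^{|V|-1}$.

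For that final reconciliation I would substitute the quoted definition $\Delta_{(\mathbb{G},c)}(t)=\langle D,c\rangle\,(t^{-1/2}-t^{1/2})^{-(|V|-1)}\cdot\mathcal{C}(D,c)$ from \cite[Definition~3.7]{MR4090586} into the right-hand side, giving
\[
\Delta_{(\mathbb{G},c)}(t)\cdot(t^{1/2}-t^{-1/2})^{|V|-1}
= \mathcal{C}(D,c)\cdot\langle D,c\rangle\cdot\frac{(t^{1/2}-t^{-1/2})^{|V|-1}}{(t^{-1/2}-t^{1/2})^{|V|-1}}.
\]
The quotient of the two sign-opposite powers is $(-1)^{|V|-1}$, which I would reconcile with the overall minus sign on the left; the main obstacle I anticipate is precisely this sign bookkeeping, since one must verify that the $(-1)^{|V|-1}$ produced by the two oppositely-oriented factors $(t^{1/2}-t^{-1/2})$ and $(t^{-1/2}-t^{1/2})$ combines correctly with the leading $-1$ in the definition of $\Delta_{\mathbb{D}}$. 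I would check this by pinning down the conventions on a base case, such as the trivial knot flagged in the remark after Definition~\ref{def:normalizedAlexanderpolynomial}, where $\Delta_{\mathbb{D}}=(t^{1/2}-t^{-1/2})^{-1}$, and confirm that both normalizations agree there; any residual sign is then forced to be trivial and the identity follows.
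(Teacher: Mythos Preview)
Your approach is the same as the paper's: apply $\Phi_c$ to $\Delta_{\mathbb{G}}=-\mathrm{Rot}(D)^{1/2}\langle D\rangle$, use \eqref{sumrelation} for the state-sum factor and Proposition~\ref{rotrelation} for the rotation factor, and compare with the quoted definition of $\Delta_{(\mathbb{G},c)}(t)$. Everything up to and including
\[
\Phi_c(\Delta_{\mathbb{G}}) = -\,\mathcal{C}(D,c)\cdot\langle D,c\rangle
\quad\text{and}\quad
\Delta_{(\mathbb{G},c)}(t)\cdot(t^{1/2}-t^{-1/2})^{|V|-1}
= (-1)^{|V|-1}\,\mathcal{C}(D,c)\cdot\langle D,c\rangle
\]
is correct.

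The only genuine gap is the final sign reconciliation. You propose to ``pin down the conventions on a base case, such as the trivial knot'' and conclude that ``any residual sign is then forced to be trivial.'' But the discrepancy is not a fixed overall sign: it is $(-1)^{|V|-1}$, which depends on the graph, so verifying one example (particularly the trivial knot, which has $|V|=0$ and is not even trivalent) does not settle the general case. What is needed is exactly what the paper supplies: for a trivalent graph the handshake identity $3|V|=2|E|$ forces $|V|$ to be even, hence $|V|-1$ is odd and $(-1)^{|V|-1}=-1$, matching the explicit $-1$ in the definition of $\Delta_{\mathbb{D}}$. Once you insert this one-line parity observation, your argument is complete and identical to the paper's.
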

\begin{proof}
Note that for a trivalent graph, $|V|$ is always an even number, so $(t^{1/2}-t^{-1/2})^{|V|-1}=-(t^{-1/2}-t^{1/2})^{|V|-1}$. The result then follows from Equation~(\ref{sumrelation}) and Proposition~\ref{rotrelation}.
\end{proof}

\section{Topological invariance}

We now establish the topological invariance of our normalized Alexander polynomial. We begin by characterizing plane graphs with non-zero state sum.
Recall that in graph theory, an oriented graph $G$ is called {\it strongly connected} if for every pair of vertices there exists a directed path connecting them. Several equivalent characterizations exist:

\begin{lemma}\label{stronglyconnected}
Let $G$ be an oriented connected graph. The following conditions are equivalent:
\begin{enumerate}
\item $G$ is strongly connected: there exists a directed path between each pair of vertices.
\item Every edge of $G$ is contained in a directed cycle.
\item There exists a positive integer coloring $c: E \to \mathbb{N}$ such that for each vertex $v$,
$$\sum_{\text{$e$: pointing into $v$}} c(e)=\sum_{\text{$e$: pointing out of $v$}} c(e).$$
    \end{enumerate}
   
\end{lemma}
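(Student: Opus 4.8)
The plan is to prove the three conditions equivalent by establishing the cyclic chain of implications $(i) \Rightarrow (ii) \Rightarrow (iii) \Rightarrow (i)$, since each individual step is relatively self-contained. Throughout, $G$ is assumed connected in the usual (undirected) sense, which is needed to guarantee that the three notions coincide rather than merely holding componentwise.

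For $(i) \Rightarrow (ii)$, I would fix an arbitrary edge $e$ with tail $u$ and head $v$. Strong connectivity supplies a directed path from $v$ back to $u$; concatenating this path with $e$ produces a directed closed walk through $e$, from which a directed cycle through $e$ can be extracted by removing repeated vertices. This is the easy direction. The implication $(iii) \Rightarrow (i)$ is the natural converse to the construction underlying our MOY graphs: given a positive coloring $c$ satisfying the balance (conservation) condition at every vertex, I would interpret $c$ as a nonnegative integer flow that is conserved at each node, hence divergence-free. A standard flow-decomposition argument then writes this flow as a nonnegative integer combination of directed-cycle indicator functions. Since $c(e) > 0$ for every edge $e$, each edge must lie in at least one of these directed cycles; strong connectivity of the connected graph $G$ follows because every edge, and therefore every vertex, is covered by directed cycles that can be spliced together along shared vertices.

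The implication I expect to require the most care is $(ii) \Rightarrow (iii)$: producing from the hypothesis ``every edge lies in a directed cycle'' an explicit positive integer coloring satisfying the balance condition. The idea is that for each edge $e$ we may choose a directed cycle $C_e$ containing it, and then set
\[
c(e') := \#\{\, e \in E : e' \in C_e \,\}
\]
to be the number of these chosen cycles traversing $e'$. Equivalently, $c = \sum_{e \in E} \mathbf{1}_{C_e}$, a sum of indicator functions of directed cycles. Each $\mathbf{1}_{C_e}$ individually satisfies the balance condition at every vertex, because a directed cycle enters and leaves each vertex it visits the same number of times; hence the sum $c$ also satisfies balance at every vertex. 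Positivity is immediate: each edge $e$ is contained in its own chosen cycle $C_e$, so $c(e) \geq 1$ for all $e$. The main obstacle here is simply verifying the balance condition cleanly for a single directed cycle and confirming it is preserved under summation, together with checking that the construction genuinely yields $c(e) \geq 1$ everywhere rather than only on edges appearing in cycles---a point secured precisely by hypothesis $(ii)$.

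An alternative organization would prove $(ii) \Rightarrow (iii)$ directly by the flow-decomposition viewpoint and derive $(i)$ from $(ii)$ by splicing the cycles through a common vertex using connectedness, but the cycle-counting coloring above has the advantage of being fully explicit and of making transparent the link to the $\mathbb{Z}$-coloring $c$ that defines an MOY graph. I would adopt the explicit construction for clarity, remarking that the positivity of $c$ is exactly what is needed downstream to ensure the associated state sum can be nonzero.
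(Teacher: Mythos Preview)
Your proof is correct, and the overall cyclic scheme $(i)\Rightarrow(ii)\Rightarrow(iii)\Rightarrow(i)$ matches the paper's. The implication $(i)\Rightarrow(ii)$ is handled identically. For $(ii)\Rightarrow(iii)$, your construction $c=\sum_{e\in E}\mathbf{1}_{C_e}$ is essentially the same idea as the paper's---the paper also builds the coloring as a positive integer combination of cycle indicators, but phrases it iteratively (start from the zero coloring, then for each still non-positive edge add a large multiple of a cycle through it). Your one-shot version is a bit cleaner.

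The genuine difference is in $(iii)\Rightarrow(i)$. You invoke a flow decomposition of $c$ into directed cycles, deduce that every edge lies in a cycle (i.e., recover $(ii)$), and then splice cycles along an undirected path to get strong connectivity. The paper instead gives a direct cut argument: if there were no directed path from $u$ to $v$, take $S$ to be the set of vertices reachable from $u$ and $T$ its complement; all edges across the cut go from $T$ to $S$, so summing the balance equations over $S$ forces the total positive flow entering $S$ to vanish, contradicting $c>0$. The paper's argument is shorter and self-contained (no appeal to a decomposition theorem), while yours has the virtue of making the equivalence $(ii)\Leftrightarrow(iii)$ symmetric via flow decomposition and of reusing the cycle picture that feeds into the MOY interpretation. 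Either is fine here; if you keep your version, you should spell out the splicing step (given an undirected path $u=w_0,\ldots,w_k=v$, each edge $w_iw_{i+1}$ lies on some directed cycle, which furnishes a directed path $w_i\to w_{i+1}$), since that is where the connectedness hypothesis is actually consumed.
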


\begin{proof}
We prove the equivalences:

$(i) \Rightarrow (ii)$: Suppose $G$ is strongly connected and let $e$ be an edge from vertex $u$ to $v$. Since $G$ is strongly connected, there exists a directed path from $v$ back to $u$. Concatenating $e$ with this path yields a directed cycle containing $e$.

$(ii) \Rightarrow (iii)$: We begin with an integer coloring that satisfies the balanced conditions at each vertex (such as assigning $c(e)=0$ for all edges $e\in E$). Then, for any edge $e$ with non-positive coloring, we select a directed cycle containing $e$ (which exists by assumption) and add a sufficiently large positive integer to the color of every edge in that cycle. This operation preserves the balance conditions at all vertices while increasing the colors along the cycle. By repeating this process for each edge with non-positive color, we eventually obtain a desired positive integer coloring.

$(iii) \Rightarrow (i)$: Suppose there exists a positive integer coloring $c$ satisfying the vertex balance conditions. We will show that $G$ must be strongly connected. For contradiction, assume there exist vertices $u$ and $v$ with no directed path from $u$ to $v$. Let $S$ be the set of vertices reachable from $u$ by directed paths, and $T$ be its complement (containing $v$). Consider the cut between $S$ and $T$. By construction, all edges crossing this cut must go from $T$ to $S$. The positive flow across this cut from $T$ to $S$ with no return flow violates the balance condition for the vertices in $S$ (and $T$) collectively, contradicting the existence of our coloring. Therefore, $G$ must be strongly connected.
\end{proof}

\begin{lemma}
\label{positive}
Let $D$ be a plane transverse graph diagram. If $\langle D\rangle_\delta \neq 0$ for all choices of base point $\delta$, then the underlying directed graph of $D$ is strongly connected.
\end{lemma}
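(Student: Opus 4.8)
The plan is to prove the contrapositive: assuming the underlying digraph $G$ is \emph{not} strongly connected, I will exhibit a single base point $\delta$ for which $\langle D\rangle_\delta = 0$. Since the hypothesis requires non-vanishing at \emph{every} base point, producing one bad base point suffices. By Lemma~\ref{stronglyconnected} the failure of strong connectivity lets me pass to the condensation of $G$ into its strongly connected components, which is a nontrivial directed acyclic graph. Choosing a source component and letting $S\subset V$ be its vertex set, I obtain a proper nonempty, connected set $S$ across whose boundary every edge is directed \emph{out} of $S$; in particular no edge of $G$ has its head in $S$ except the edges internal to $S$, and by the standing assumption that each meridian is nontrivial (so $G$ has no undirected bridge) there are at least two edges leaving $S$.

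The mechanism is a Hall-type counting argument for Kauffman states, which simplifies drastically because $D$ is a plane diagram: the only elements of $\operatorname{Cr}(D)$ are the circle crossings, one for each incoming edge-end, and a circle crossing sitting on an edge $e$ at its head $v$ is adjacent only to the circle region $c_v$ (its north corner) and to the two faces of $D$ flanking $e$ near $v$ (its west and east corners). Consequently $c_v$ can be matched only to a crossing coming from an edge whose head is $v$. I place the base point $\delta$ on an edge of $V\smallsetminus S$, chosen so that the two marked regions $R_u,R_v$ lie outside the planar territory of $S$. Let $R'$ consist of the circle regions $c_v$ for $v\in S$ together with every face of $D$ all of whose adjacent crossings lie over $S$. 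Because no edge enters $S$, every crossing adjacent to a region of $R'$ is a circle crossing at a vertex of $S$, of which there are exactly $E_S$ (the number of edges internal to $S$). An Euler-characteristic count for the connected plane subgraph $G_S$ gives $|S|+(E_S-|S|+1)=E_S+1$ regions competing for $E_S$ crossings, which is precisely the Hall deficiency $|R'|>|N(R')|$ forcing $S(D,\delta)=\varnothing$ and hence $\langle D\rangle_\delta=0$.

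The main obstacle is that this last count is only valid when the embedding cooperates: an edge leaving $S$ may plunge into a bounded face of $G_S$ and carry the entire sub-diagram on $V\smallsetminus S$ with it, subdividing that face and making its pieces adjacent to crossings outside $S$; each such invaded face drops out of $R'$ and can erode the deficiency of $1$. Controlling this planar interaction is the crux. I expect to resolve it in one of two ways: either by enlarging $R'$ to record, for each invading leaving edge, the compensating regions it creates and tracking the Euler characteristic of the \emph{full} sub-diagram cut out by $S$, so that each invasion contributes equally to $|R'|$ and to $|N(R')|$ and the net deficiency of $1$ survives; or, more robustly, by invoking a weighted spanning-arborescence interpretation of the state sum in the spirit of our earlier state-sum computations \cite{MR4090586}. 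In the latter approach the matching structure above identifies $\langle D\rangle$, up to the monomial normalization $|\delta|^{-1}$, with a weighted sum over spanning arborescences of $G$; by the directed Matrix--Tree theorem this collection is non-empty exactly when every vertex can reach the relevant root, i.e.\ exactly when $G$ is strongly connected, so its vanishing under the failure of strong connectivity is automatic and no delicate face-counting is needed.
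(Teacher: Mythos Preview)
Your second approach---the arborescence interpretation---is exactly what the paper does, and is the right way to go. The paper invokes \cite[Theorem~3.4]{MR4300448}, which gives a bijection between $S(D,\delta)$ and spanning out-arborescences of $D$ rooted at the head $v$ of the edge carrying $\delta$. The paper's argument is the direct version rather than the contrapositive: for \emph{each} edge $e$ it places $\delta$ on $e$, uses non-vanishing to obtain an out-arborescence rooted at the head $v$, and reads off a directed path from $v$ back to the tail of $e$, so $e$ sits in a directed cycle. Running over all edges and applying Lemma~\ref{stronglyconnected} finishes. Your contrapositive version works too (pick a vertex $v$ that fails to reach some other vertex, so no out-arborescence rooted at $v$ exists; then $\delta$ on any edge with head $v$ gives $S(D,\delta)=\varnothing$), but note a small slip in your direction of reachability: the arborescences here are \emph{out}-arborescences, so the condition is that the root reaches every vertex, not the other way around.

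Your first approach, the Hall/Euler-count, has precisely the problem you flag, and I do not think your proposed patch (``enlarge $R'$ and track the Euler characteristic of the full sub-diagram cut out by $S$'') will close cleanly. When an outgoing edge from $S$ lands in a bounded face of $G_S$ and drags the rest of the diagram with it, the new regions created are adjacent to crossings outside $S$, so they contribute to $|N(R')|$ as well as to $|R'|$, and there is no obvious reason the net deficiency remains positive---indeed, several leaving edges can invade the \emph{same} bounded face of $G_S$, destroying a single unit of $|R'|$ while adding multiple crossings to $N(R')$. Making this bookkeeping rigorous would essentially require rediscovering the arborescence correspondence in disguise. I would drop the Hall argument entirely and write up the arborescence proof directly, citing \cite[Theorem~3.4]{MR4300448} for the state/tree bijection.
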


\begin{proof}
We first note that for a plane graph diagram, namely a diagram without crossings of type \diaCrossP or \diaCrossN, the connectivity of the diagram $D$ as a subset of $\mathbb{R}^2$ coincides with the connectivity of the underlying graph. If $D$ were disconnected as a diagram, then $\langle D\rangle_\delta = 0$ for all $\delta$, contradicting our assumption. Thus, the graph must be connected.

Now, to prove strong connectivity, we show that every edge is contained in a directed cycle. Let $e$ be any edge of $D$ with head vertex $v$. Place the base point $\delta$ on $e$. By \cite[Theorem~3.4]{MR4300448}, the set of Kauffman states $S(D, \delta)$ corresponds bijectively to the set of oriented spanning trees of $D$ rooted at $v$, where edges are oriented away from $v$. Since $\langle D\rangle_\delta \neq 0$, there exists at least one such oriented spanning tree $T$.

Let $u$ be the tail vertex of $e$. Since $T$ is a spanning tree rooted at $v$, there exists a unique directed path $P$ in $T$ from $v$ to $u$. The concatenation of $P$ with $e$ forms a directed cycle containing $e$. As $e$ was arbitrary, every edge lies in a directed cycle, so $D$ is strongly connected by Lemma~\ref{stronglyconnected}.

\end{proof}

We now extend our analysis from plane graphs to general spatial graphs. The following proposition establishes base point independence for arbitrary transverse graphs.

\begin{prop}
\label{initial}
The state sum $\langle D\rangle_\delta$ does not depend on the choice of the base point $\delta$.
\end{prop}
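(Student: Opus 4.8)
The plan is to reduce base-point independence to the effect of sliding $\delta$ past a single crossing, and then to check that the change in the unnormalized state sum is exactly cancelled by the change in the normalizing factor $\vert\delta\vert$. The key observation is that the marked pair $\{R_u,R_v\}$ depends on $\delta$ only through the arc carrying it and the two sides of that arc at $\delta$. As long as $\delta$ moves along an edge of $D$ without passing any crossing of type \diaCrossP, \diaCrossN, or \diaCircle, the pair $\{R_u,R_v\}$, the state set $S(D,\delta)$, and all local weights $M_{C_p}^{\triangle}$, $A_{C_p}^{\triangle}$ are literally unchanged, so $\langle D\rangle_\delta$ is constant along each such arc-segment. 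Since $D$ is connected, the arc-segments (edges cut by crossings) are joined to one another through crossings, so any two admissible positions of $\delta$ are connected by a finite sequence of such slides interspersed with elementary moves in which $\delta$ traverses one crossing (passing under or over a transverse strand, or passing the circle around a vertex) and thereby exchanges its two adjacent regions. It therefore suffices to prove invariance of $\vert\delta\vert^{-1}\sum_{s} M(s)A(s)$ under one elementary move.

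I then fix such a move across a crossing $C$. Before the move the marked pair is $\{A,B\}$, the two regions flanking the arc on the near side of $C$; after the move it is $\{A',B'\}$, the regions on the far side, where the four winding numbers at $C$ are those prescribed by the inductive rule of Section~2. The states counted before and after differ precisely in which of these regions are forbidden and in the corner assigned to $C$ itself. I would partition $S(D,\delta_{\text{before}})$ and $S(D,\delta_{\text{after}})$ according to the corner occupied by $C$ and construct a weight-preserving correspondence between the two families, reading the contributions off the tables in Fig.~\ref{fig:e1}. The bookkeeping is identical in form to the single-variable computation of \cite[Proposition 3.1]{MR4090586} and to the proof of Lemma~\ref{skein}, except that each scalar $t^{\pm 1/2}$ is now replaced by the corresponding meridional generator of $H_1(S^3\setminus G;\mathbb{Z})$; the winding-number assignments of Section~2 guarantee that these generators multiply consistently around $C$. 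The outcome is that $\sum_s M(s)A(s)$ changes by exactly the factor $\vert\delta_{\text{after}}\vert/\vert\delta_{\text{before}}\vert$. Writing $\vert\delta\vert = x - xt = x(1-t)$ for marked regions of winding numbers $x$ and $xt$ across an edge with meridian $t$, the ratio of the two unnormalized sums coincides with the ratio of these two expressions, so dividing by $\vert\delta\vert$ yields $\langle D\rangle_{\delta_{\text{before}}}=\langle D\rangle_{\delta_{\text{after}}}$.

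The main obstacle is the transition computation in the second step: the states in which $C$ is assigned to one of the four regions $A,B,A',B'$ whose marking status changes are exactly those with no obvious counterpart on the other side of the move, and these must be handled by hand. I expect this to proceed just as in Lemma~\ref{skein}, where the signs built into the $M$-weights (the $-1$ at each north corner) force the relevant partial sums to telescope. Conceptually, this reflects the fact that $\sum_s M(s)A(s)$ is a maximal minor of the weighted crossing–region incidence matrix obtained by deleting the two columns indexed by $R_u$ and $R_v$; deleting two different adjacent columns alters the minor by a cofactor ratio equal to $\vert\delta_{\text{after}}\vert/\vert\delta_{\text{before}}\vert$, which is the invariant content of the local calculation. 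For the crossingless case one can alternatively invoke the oriented spanning-tree description \cite[Theorem~3.4]{MR4300448} together with a weighted matrix–tree argument, in which the cofactors for different root vertices are proportional with precisely this factor, giving an independent check of the transition formula.
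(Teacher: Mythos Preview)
Your sliding strategy is the classical Kauffman approach for link diagrams, but it does not transfer to the decorated graph setting used here. In the decorated diagram each vertex is blown up to a circle, and edges \emph{terminate} on that circle; only the incoming ends carry a crossing of type \diaCircle, while outgoing ends carry no crossing at all. Thus an arc-segment abutting a vertex circle is not ``joined through a crossing'' to any arc on another edge, and sliding $\delta$ past a \diaCrossP\ or \diaCrossN\ crossing keeps it on the same strand. Your connectivity claim---that any two admissible positions of $\delta$ are linked by crossing traversals---therefore fails as soon as $D$ has more than one edge (in particular for every plane graph with vertices), and the reduction to a single elementary move collapses. Even if one added a ``turn onto the other strand at a crossing'' move, edges that meet only at vertices would remain unreachable from one another.

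The paper's proof takes a different route that bypasses sliding altogether. It reads the unnormalized sum as a minor of an Alexander matrix and invokes \cite[Proposition~2.18]{MR4090586} to get $\langle D\rangle_\delta=\pm\langle D\rangle_{\delta'}$ for \emph{arbitrary} $\delta,\delta'$; the real content is then fixing the sign in the multi-variable setting. For plane graphs this is done by first using Lemma~\ref{positive} (nonvanishing forces strong connectivity) to manufacture a positive integer colouring $c$, then specializing via $\Phi_c$ to the single-variable state sum where the sign is already known to be $+$, and lifting back because $\langle D,c\rangle\neq 0$. General diagrams are reduced to planar ones through the skein relations of Lemma~\ref{skein}. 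Your matrix paragraph gestures at the first step but simply asserts that the cofactor ratio equals $\vert\delta_{\text{after}}\vert/\vert\delta_{\text{before}}\vert$ with the correct sign; this is exactly the point at issue, and the detour through Lemma~\ref{positive} and the single-variable result is what the paper uses to pin it down.
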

\begin{proof}
The proof follows the approach of \cite{MR4090586}. If $D$ is disconnected (as a diagram), then $\langle D\rangle_\delta=0$ by definition, so we assume $D$ is connected.

Define an Alexander matrix $A(D)$ similar to \cite[Definition 2.14]{MR4090586}, using the local contribution $A_{C_p}^{\triangle}$ defined in Fig.~\ref{fig:e1}.  Let $A(D)\backslash(u, v)$ denote the square matrix obtained from $A(D)$ by removing the columns corresponding to the marked regions $R_u$ and $R_v$ adjacent to $\delta$. Then by \cite[Proposition 2.16]{MR4090586}, we have
$$\langle D\rangle_\delta= \pm \vert \delta \vert^{-1} \cdot \det A(D)\backslash(u, v).$$
Now let $\delta'$ be another choice of base point.
Using the same argument as in \cite[Proposition 2.18]{MR4090586}, we can show that $$\langle D\rangle_{\delta}=\pm \langle D \rangle_{\delta'}.$$ 

To resolve the sign ambiguity, we first consider plane graph diagrams.  
Since $\langle D \rangle_{\delta} = 0$ if and only if $\langle D \rangle_{\delta'} = 0$, we may assume both are nonzero.  
By Lemma~\ref{positive}, $D$ is strongly connected, so there exists a positive integer coloring $c$ of $D$. 
Then for the MOY graph $(D, c)$, by \cite[Proposition~2.18]{MR4090586}, 
$$
\langle D, c \rangle_{\delta} = \langle D, c \rangle_{\delta'}.
$$
Furthermore, since $(D, c)$ is a plane graph with a positive integer coloring,  
\cite[Section~5.4]{MR4090586} implies that $\langle D, c \rangle \neq 0$.  
Recalling that $\Phi_c(\langle D \rangle) = \langle D, c \rangle$, we conclude that
$$
\langle D \rangle_{\delta} = \langle D \rangle_{\delta'}.
$$

For general graph diagrams containing crossings of type \diaCrossP or \diaCrossN, we extend this result using skein-type relations in Lemma \ref{skein}.
Note that these relations hold for an arbitrary choice of $\delta$ as long as it is consistently placed in the same position on both sides. The proposition, which holds for plane graphs, thus extends to general graph diagrams.
\end{proof}

\begin{theo}\label{thmDeltaG}
The diagram-defined polynomial $\Delta_{\mathbb{D}}$ is a topological invariant of the framed transverse graph $\mathbb{G}$, which we denote by $\Delta_{\mathbb{G}}$.
\end{theo}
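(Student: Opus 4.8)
The plan is to reduce everything to the two structural facts already established---Proposition~\ref{rotmove1} for the rotation number and Proposition~\ref{invtheo} for the state sum---together with base-point independence. First I would record that $\Delta_{\mathbb{D}}=-\mathrm{Rot}(D)^{1/2}\cdot\langle D\rangle$ is well defined: the rotation number is manifestly independent of any base point, and Proposition~\ref{initial} guarantees that $\langle D\rangle$ is as well, so no choice enters the definition. By the Lemma preceding Fig.~\ref{fig:framed}, it then suffices to prove that $\Delta_{\mathbb{D}}$ is unchanged under each of the framed Reidemeister moves (I$'$), (II), (III), (IV).

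For moves (II), (III) and (IV) the argument is immediate. Proposition~\ref{rotmove1} shows that $\mathrm{Rot}(D)$ is invariant under (II)--(V), and Proposition~\ref{invtheo} shows that $\langle D\rangle$ is invariant under the same moves. Since $\Delta_{\mathbb{D}}$ is the product of (a square root of) the first factor with the second, it is invariant under all three.

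The only genuine content is move (I$'$), and this is where I expect the main obstacle to lie. The strategy is to view the framed move as the insertion of a canceling pair of curls and to compute the effect of a single curl on each factor separately. Concretely, for a positive curl Proposition~\ref{rotmove1} gives $\mathrm{Rot}\mapsto t\cdot\mathrm{Rot}$, hence $\mathrm{Rot}^{1/2}\mapsto t^{1/2}\cdot\mathrm{Rot}^{1/2}$, while Proposition~\ref{invtheo} gives $\langle D\rangle\mapsto t^{-1}\langle D\rangle$; combining these, $\Delta_{\mathbb{D}}\mapsto t^{1/2}t^{-1}\Delta_{\mathbb{D}}=t^{-1/2}\Delta_{\mathbb{D}}$. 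For a negative curl the same two propositions give $\mathrm{Rot}\mapsto t^{-1}\cdot\mathrm{Rot}$ and $\langle D\rangle\mapsto t\,\langle D\rangle$, so $\Delta_{\mathbb{D}}\mapsto t^{-1/2}t\,\Delta_{\mathbb{D}}=t^{1/2}\Delta_{\mathbb{D}}$. Because move (I$'$) introduces one curl of each writhe, the total factor is $t^{-1/2}\cdot t^{1/2}=1$, so $\Delta_{\mathbb{D}}$ is preserved; the right-hand version of (I$'$) is handled identically, with the roles of the two curls exchanged.

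The delicate points I anticipate are, first, that the half-integer exponent produced by $\mathrm{Rot}(D)^{1/2}$ must exactly offset the integer exponent coming from $\langle D\rangle$ under each curl---this cancellation is the entire purpose of the normalization and must be verified on the nose, keeping the formal square roots of the generators consistent throughout. Second, to legitimately apply the single-curl computations I would first use the already-established invariance under (II) and (III) to isotope the two curls of (I$'$) apart so that each may be analyzed in isolation, and I would invoke Proposition~\ref{initial} to place the base point $\delta$ on an edge disjoint from both curls. Granting these, $\Delta_{\mathbb{D}}$ is invariant under every move in Fig.~\ref{fig:framed}, hence depends only on the framed transverse graph $\mathbb{G}$, and we may write $\Delta_{\mathbb{G}}$.
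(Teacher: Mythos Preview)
Your argument is correct and follows exactly the paper's route: invariance under (II)--(IV) is inherited from Propositions~\ref{rotmove1} and~\ref{invtheo}, base-point independence from Proposition~\ref{initial}, and move (I$'$) is handled by checking that the $t^{\pm1}$ discrepancies in $\mathrm{Rot}$ and $\langle D\rangle$ cancel after taking the square root. One minor point of bookkeeping: in (I$'$) as drawn both loops sit on the \emph{same} side of the strand, so the negative-writhe curl actually has $\mathrm{Rot}\mapsto t\cdot\mathrm{Rot}$ and $\langle D\rangle$ unchanged (rather than $t^{-1}$ and $t$ as you wrote); the net effect on $\Delta_{\mathbb D}$ is still $t^{1/2}$ so your cancellation goes through, and the paper simply records the composite $\mathrm{Rot}(D_1)=t^{2}\mathrm{Rot}(D_2)$, $\langle D_1\rangle=t^{-1}\langle D_2\rangle$ directly.
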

\begin{proof}
Since both $\langle D\rangle$ and $\mathrm{Rot}(D)$ are invariant under Reidemeister moves $(II), (III)$ and $(IV)$, it remains to check move $(I')$. Suppose the graphs on the two sides of $(I')$ are $\mathbb{D}_1$ and $\mathbb{D}_2$, respectively. By Proposition~\ref{invtheo}, we have $\langle D_1\rangle = t^{-1}\langle D_2\rangle$, where $t$ is the oriented meridian of the edge. On the other hand, Proposition~\ref{rotmove1} gives $\mathrm{Rot}(D_1) = t^2 \mathrm{Rot}(D_2)$. Hence, under move $(I')$ we obtain
\[
\Delta_{\mathbb{D}_1} = \mathrm{Rot}(D_1)^{1/2} \cdot \langle D_1\rangle = (t^2\, \mathrm{Rot}(D_2))^{1/2} \cdot (t^{-1}\langle D_2\rangle) = \mathrm{Rot}(D_2)^{1/2} \cdot \langle D_2\rangle = \Delta_{\mathbb{D}_2}.
\]
Combined with base point independence from Proposition~\ref{initial}, this shows $\Delta_{\mathbb{D}}$ depends only on the framed transverse graph $\mathbb{G}$, justifying the notation $\Delta_{\mathbb{G}}$.

\end{proof}

\begin{rem}
While $\Delta_{\mathbb{G}}$ is a topological invariant of the framed transverse graph $\mathbb{G}$ as an element of the group ring $\mathbb{Z}[H_1(S^3\setminus \mathbb{G}; \mathbb{Z})]$, 
its expression as a multi-variable Laurent polynomial depends on the choice of generators for the presentation of $H_1(S^3\setminus \mathbb{G}; \mathbb{Z})$. 
Different choices of generators will yield different but equivalent polynomial representations related by the appropriate change of variables.
\end{rem}

\section{Relation with Viro's $U_q(\mathfrak{gl}(1\vert 1))$-Alexander polynomial}\label{section:viro}

\subsection{Viro's $U_q(\mathfrak{gl}(1\vert 1))$-Alexander polynomial}

Viro \cite{MR2255851} constructed a multi-variable Alexander polynomial $\underline{\Delta}^1(\mathbb{G})$ for framed trivalent graphs using representation theory of finite-dimensional modules over $U^1$, a subalgebra of the $q$-deformed universal enveloping superalgebra $U_{q}(\mathfrak{gl}(1 \vert 1))$. We outline the key aspects relevant to our work.

For an oriented framed trivalent graph $\mathbb{G}$ without sinks or sources, let  
\[
M = H^1(\mathbb{G}; \mathbb{Z}) \cong H_1(S^3 \setminus \mathbb{G}; \mathbb{Z}),
\]
and let $B$ be the quotient ring of $\mathbb{Z}[M]$.  
Each edge of $\mathbb{G}$ is assigned with an element $t \in M$, called \textit{multiplicity}, which corresponds to the homology class of the oriented meridian of that edge under the above identification.  

\begin{rem}
In Viro's original construction, each edge carries both a multiplicity $t \in M$ and a weight $N \in \mathbb{Z}$, corresponding to modules over $U^1 \otimes_{\mathbb{Z}} B$. Throughout this paper, we set $N = 1$ unless otherwise specified.
\end{rem}

In the above setup, we now describe a procedure for computing $\underline{\Delta}^1(\mathbb{G})$. 
Choose a diagram $\mathbb{D}$ of $\mathbb{G}$ in $\mathbb{R}^2$. 
The diagram divides $\mathbb{R}^2$ into several regions, one of which is unbounded. 
Select an edge of $\mathbb{D}$ on the boundary of the unbounded region, and cut it at a generic point $\delta$. 
Suppose the oriented meridian of this edge is $t$. 

Next, deform the diagram by isotopies of $\mathbb{R}^2$ into Morse position. 
Arrange it so that the two endpoints obtained from cutting $\delta$ have heights $0$ and $1$, and all critical points, crossings, half-twist symbols, and vertices of the diagram occur at distinct heights between $0$ and $1$. 

In other words, after this deformation, the diagram can be divided by horizontal lines into several layers, each consisting of a disjoint union of trivial vertical segments together with one of the eight \textit{elementary pieces} shown in Fig.~\ref{fig2}. 
For diagrams with blackboard framing, the half-twist symbols do not appear.

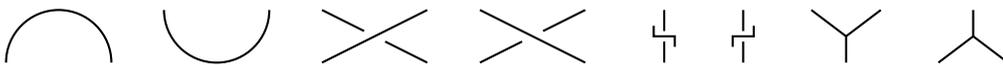
\begin{figure}[!h]
\begin{tikzpicture}[baseline=-0.65ex, thick, scale=0.7]
\draw (-1, 0) arc (0:180:1);
\draw (0, 1) arc (180:360:1);
\draw (3, 0) -- (5, 1);
\draw (5, 0) -- (4.2, 0.4);
\draw (3, 1) -- (3.8, 0.6);
\draw (8, 0) -- (6, 1);
\draw (8, 1) -- (7.2, 0.6);
\draw (6, 0) -- (6.8, 0.4);
\draw (9.5, 0) -- (9.5, 0.4);
\draw (9.5, 1) -- (9.5, 0.6);
\draw (9.3, 0.7) -- (9.3, 0.5) -- (9.7, 0.5) --(9.7, 0.3);
\draw (11, 0) -- (11, 0.4);
\draw (11, 1) -- (11, 0.6);
\draw (10.8, 0.3) -- (10.8, 0.5) -- (11.2, 0.5) --(11.2, 0.7);
\end{tikzpicture}\hspace{6mm}
\begin{tikzpicture}[baseline=-0.65ex, thick, scale=0.7]
\draw  (0, 0)  to (0,0.5);
\draw   (0,0.5)   to  (0.66,1);
\draw   (0,0.5)  to  (-0.66,1);
\end{tikzpicture}\hspace{6mm}
\begin{tikzpicture}[baseline=-0.65ex, thick, scale=0.7]
\draw  (0, 0.5)  to (0,1);
\draw   (0,0.5)   to  (0.66,0);
\draw   (0,0.5)   to  (-0.66,0);
\end{tikzpicture}
\caption{Critical points, crossings, half-twist symbols and vertices.}
\label{fig2}
\end{figure}

Each elementary piece in the Morse-position diagram connects sequences of endpoints from bottom to top. Viro's functor assigns to each such piece a morphism between tensor products of 2-dimensional vector spaces, with the morphisms determined by Boltzmann weights from Tables 3 and 4 of \cite{MR2255851}.


\begin{prop}[Vertex state sum representation in \cite{MR2255851}]
\label{sum} 
Let $e_0$ (boson) and $e_1$ (fermion) denote the two basis elements of a 2-dimensional vector space.
The invariant $\underline{\Delta}^1(\mathbb{G})$ can be calculated as follows: 

\begin{enumerate}
\item A state is a map 
$$s: E\to \{e_0, e_1\}$$
such that the edge with the point $\delta$ is mapped to $e_1$.
Graphically, edges assigned $e_0$ are drawn as dotted lines, and those assigned $e_1$ as solid lines.

\item For each state, compute the product of Boltzmann weights (from Tables 3--4 of \cite{MR2255851}) at all special points: critical points, the crossings, the half-twist symbols, and the vertices. This gives a value $m(s) \in B$ for each $s$.

\item Sum $m(s)$ over all states and multiply by $\displaystyle \frac{1}{t^2-t^{-2}}$ to obtain $\underline{\Delta}^1(\mathbb{G})$.
\end{enumerate}
\end{prop}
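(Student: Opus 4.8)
The plan is to unpack Viro's functorial definition of $\underline{\Delta}^1(\mathbb{G})$ into the explicit state-sum form asserted here. Recall that Viro's construction regards the Morse-position diagram as a morphism in a suitable category of $U^1 \otimes_{\mathbb{Z}} B$-modules: each elementary piece in Fig.~\ref{fig2} is assigned a morphism between tensor powers of the $2$-dimensional module $V$ with basis $\{e_0, e_1\}$, and the full diagram is the composition of these morphisms read from bottom to top, tensored with the identity morphisms on the trivial vertical segments. Cutting the chosen boundary edge at $\delta$ turns $\mathbb{D}$ into a $(1,1)$-tangle, so the associated morphism is an endomorphism $f \in \operatorname{End}(V)$, and $\underline{\Delta}^1(\mathbb{G})$ is defined, up to the stated normalization, by reading off the appropriate entry of $f$.

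First I would fix the standard monomial basis of each tensor power $V^{\otimes m}$ indexed by words in $\{e_0, e_1\}$ and express every elementary-piece morphism as a matrix in these bases; by construction its entries are exactly the Boltzmann weights recorded in Tables 3--4 of \cite{MR2255851}. Next I would expand the composition defining $f$ in these bases. A matrix entry of a composite equals the sum over all intermediate basis vectors of the products of the constituent entries; concretely, this is a sum over an assignment of a basis element of $V$ to each strand at each intermediate height. Because the morphism attached to a trivial vertical segment is the identity, each assignment is forced to be constant along a strand between consecutive special points, so globally a consistent assignment is the same datum as a map $s : E \to \{e_0, e_1\}$ on the edge set of $\mathbb{G}$. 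Under this identification the product of the constituent matrix entries along the diagram is precisely the product $m(s)$ of Boltzmann weights over the critical points, crossings, half-twist symbols, and vertices.

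It then remains to match the boundary condition and the normalization. Since $f$ is an endomorphism of $V$ and $\underline{\Delta}^1(\mathbb{G})$ is extracted from the entry indexed by the fermion $e_1$, the two endpoints of the cut edge are both labeled $e_1$; as a state assigns labels to edges rather than to endpoints, this is exactly the constraint that the edge carrying $\delta$ maps to $e_1$. Summing $m(s)$ over all states realizing this boundary condition therefore computes that entry of $f$, and multiplying by $\frac{1}{t^2 - t^{-2}}$ reproduces Viro's normalization, yielding $\underline{\Delta}^1(\mathbb{G})$.

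The main obstacle I anticipate is the bookkeeping needed to confirm that the matrix entries of Viro's elementary morphisms agree on the nose with the tabulated Boltzmann weights---in particular, tracking the conventions for the caps and cups (the critical points) and the half-twist symbols, and verifying that the scalar $\frac{1}{t^2 - t^{-2}}$ is the correct factor relating the fermionic entry of the cut endomorphism to Viro's invariant. This is a matter of carefully comparing conventions with \cite{MR2255851} rather than introducing any new idea.
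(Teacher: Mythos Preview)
The paper does not supply its own proof of this proposition: it is quoted as Viro's \emph{vertex state sum representation} from \cite{MR2255851}, so there is nothing to compare against directly. Your outline is the standard and correct way to extract a state sum from a Reshetikhin--Turaev type functor, and it matches the spirit of Viro's construction.

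There is, however, one step that is not justified as written. You argue that because the trivial vertical segments carry the identity, an intermediate basis labeling is constant between consecutive special points, and then conclude that ``globally a consistent assignment is the same datum as a map $s : E \to \{e_0, e_1\}$ on the edge set of $\mathbb{G}$.'' The second clause does not follow from the first: an edge of $\mathbb{G}$ may pass through several crossings, critical points, or half-twist symbols, so a priori a labeling of arcs between special points carries strictly more data than a labeling of edges. What you actually need is that the morphisms assigned to crossings, caps, cups, and half-twists are \emph{label-preserving along each strand}, i.e.\ their only nonzero matrix entries are those for which the basis element on a given strand is the same on input and output. This is a genuine property of the particular $U_q(\mathfrak{gl}(1\vert 1))$ Boltzmann weights in Viro's Tables~3--4, not a formal consequence of the categorical setup; you should state it explicitly and verify it from the tables rather than folding it into ``bookkeeping.'' Once that check is made, your reduction to edge labelings, the boundary condition $s(\text{edge at }\delta)=e_1$, and the normalization by $\frac{1}{t^2-t^{-2}}$ go through exactly as you describe.
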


The Boltzmann weights enforce specific constraints at vertices. In particular, \cite[Table 4]{MR2255851} shows that for vertices which are neither sinks nor sources, only the following edge assignments yield nonzero contributions:

\begin{align*}
\begin{tikzpicture}[baseline=-0.65ex, thick, scale=1.2]
\draw [dotted] (0, 0) [->-] to (0,0.5);
\draw  [dotted] (0,0.5) [->]  to  (0.66,1);
\draw  [dotted] (0,0.5) [->]  to  (-0.66,1);
\end{tikzpicture} \quad
\begin{tikzpicture}[baseline=-0.65ex, thick, scale=1.2]
\draw  (0, 0) [->-] to (0,0.5);
\draw   (0,0.5) [->]  to  (0.66,1);
\draw  [dotted] (0,0.5) [->]  to  (-0.66,1);
\end{tikzpicture} \quad
\begin{tikzpicture}[baseline=-0.65ex, thick, scale=1.2]
\draw  (0, 0) [->-] to (0,0.5);
\draw  [dotted] (0,0.5) [->]  to  (0.66,1);
\draw   (0,0.5) [->]  to  (-0.66,1);
\end{tikzpicture} \hspace{1cm}
\begin{tikzpicture}[baseline=-0.65ex, thick, scale=1.2]
\draw [dotted] (0, 0.5) [->] to (0,1);
\draw  [dotted] (0,0.5) [-<-]  to  (0.66,0);
\draw [dotted]  (0,0.5) [-<-]  to  (-0.66,0);
\end{tikzpicture} \quad
\begin{tikzpicture}[baseline=-0.65ex, thick, scale=1.2]
\draw  (0, 0.5) [->] to (0,1);
\draw   (0,0.5) [-<-]  to  (0.66,0);
\draw  [dotted] (0,0.5) [-<-]  to  (-0.66,0);
\end{tikzpicture} \quad
\begin{tikzpicture}[baseline=-0.65ex, thick, scale=1.2]
\draw  (0, 0.5) [->] to (0,1);
\draw  [dotted] (0,0.5) [-<-]  to  (0.66,0);
\draw   (0,0.5) [-<-]  to  (-0.66,0);
\end{tikzpicture}
\end{align*}

Therefore, for a plane graph $\mathbb{G}$ without sinks or sources, solid edges in each state form disjoint simple closed curves, one containing $\delta$.  The following example illustrates this situation.

\begin{example}
Consider the plane graph $\Gamma$ in Fig. \ref{Fig:gamma} with $\delta$ paced on the edge $e$. The states that can potentially have a nonzero $m(s)$ are listed in Fig.~\ref{Fig:gamma2}.
\begin{figure}[h!]
\begin{tikzpicture}[baseline=-0.65ex, thick, scale=1]
\draw (0, 2) node[circle,fill,inner sep=1.5pt]{};
\draw (0, 1) node[circle,fill,inner sep=1.5pt]{};
\draw (1, 0) node[circle,fill,inner sep=1.5pt]{};
\draw (-1, 0) node[circle,fill,inner sep=1.5pt]{};
\draw (0, 1) [->-] to (0, 2);
\draw (1, 0) [->-] to (0, 1);
\draw (-1, 0) [->-] to (0, 1);
\draw (-1, 0) [->-] to (1, 0);
\draw (0,2)  [-<-] to [out=30,in=45] (1,0);
\draw (0,2)  [->-] to [out=150,in=135] (-1,0);
\draw (0, 2.2) node {$v_1$};
\draw (-1.2, -0.2) node {$v_2$};
\draw (1.3, -0.2) node {$v_3$};
\draw (0.3, 1) node {$v_4$};
\draw (-1.3, 1.2) node {$e$};
\end{tikzpicture}
\caption{Graph $\Gamma$.}
\label{Fig:gamma}
\end{figure}
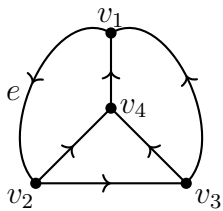

\begin{figure}[h!]
\begin{tikzpicture}[baseline=-0.65ex, thick, scale=1]
\draw (0, 2) node[circle,fill,inner sep=1.5pt]{};
\draw (0, 1) node[circle,fill,inner sep=1.5pt]{};
\draw (1, 0) node[circle,fill,inner sep=1.5pt]{};
\draw (-1, 0) node[circle,fill,inner sep=1.5pt]{};
\draw (0, 1) [->-] to (0, 2);
\draw (1, 0) [->-] to (0, 1);
\draw [dotted](-1, 0) [->-] to (0, 1);
\draw (-1, 0) [->-] to (1, 0);
\draw [dotted](0,2)  [-<-] to [out=30,in=45] (1,0);
\draw (0,2)  [->-] to [out=150,in=135] (-1,0);
\draw (0, 2.2) node {$v_1$};
\draw (-1.2, -0.2) node {$v_2$};
\draw (1.3, -0.2) node {$v_3$};
\draw (0.3, 1) node {$v_4$};
\draw (-1.3, 1.2) node {$e$};
\end{tikzpicture}\quad\quad
\begin{tikzpicture}[baseline=-0.65ex, thick, scale=1]
\draw (0, 2) node[circle,fill,inner sep=1.5pt]{};
\draw (0, 1) node[circle,fill,inner sep=1.5pt]{};
\draw (1, 0) node[circle,fill,inner sep=1.5pt]{};
\draw (-1, 0) node[circle,fill,inner sep=1.5pt]{};
\draw (0, 1) [->-] to (0, 2);
\draw [dotted](1, 0) [->-] to (0, 1);
\draw (-1, 0) [->-] to (0, 1);
\draw [dotted](-1, 0) [->-] to (1, 0);
\draw [dotted](0,2)  [-<-] to [out=30,in=45] (1,0);
\draw (0,2)  [->-] to [out=150,in=135] (-1,0);
\draw (0, 2.2) node {$v_1$};
\draw (-1.2, -0.2) node {$v_2$};
\draw (1.3, -0.2) node {$v_3$};
\draw (0.3, 1) node {$v_4$};
\draw (-1.3, 1.2) node {$e$};
\end{tikzpicture}\quad\quad
\begin{tikzpicture}[baseline=-0.65ex, thick, scale=1]
\draw (0, 2) node[circle,fill,inner sep=1.5pt]{};
\draw (0, 1) node[circle,fill,inner sep=1.5pt]{};
\draw (1, 0) node[circle,fill,inner sep=1.5pt]{};
\draw (-1, 0) node[circle,fill,inner sep=1.5pt]{};
\draw [dotted](0, 1) [->-] to (0, 2);
\draw [dotted](1, 0) [->-] to (0, 1);
\draw [dotted](-1, 0) [->-] to (0, 1);
\draw (-1, 0) [->-] to (1, 0);
\draw (0,2)  [-<-] to [out=30,in=45] (1,0);
\draw (0,2)  [->-] to [out=150,in=135] (-1,0);
\draw (0, 2.2) node {$v_1$};
\draw (-1.2, -0.2) node {$v_2$};
\draw (1.3, -0.2) node {$v_3$};
\draw (0.3, 1) node {$v_4$};
\draw (-1.3, 1.2) node {$e$};
\end{tikzpicture}
\caption{Three states contributing to $\underline{\Delta}^1(\Gamma)$.}
\label{Fig:gamma2}
\end{figure}
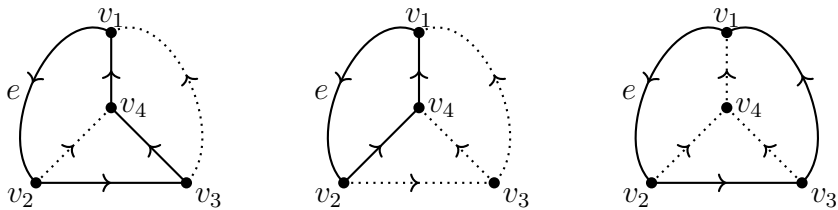
\end{example}

From this observation, we derive:
\begin{lemma}
\label{plane1}
For an oriented trivalent plane graph $\mathbb{G}$ without sinks or sources, if $\underline{\Delta}^1(\mathbb{G}) \neq 0$, then $\mathbb{G}$ is strongly connected.
\end{lemma}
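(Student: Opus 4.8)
The plan is to show that the nonvanishing of $\underline{\Delta}^1(\mathbb{G})$ forces every edge of $\mathbb{G}$ to lie on a directed cycle; strong connectivity then follows from the equivalence of conditions (ii) and (i) in Lemma~\ref{stronglyconnected}. The argument proceeds edge by edge, and the only genuinely nonroutine ingredient is the freedom to place the cut point $\delta$ on an arbitrary edge.

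First I would record the combinatorial consequence of the vertex constraints displayed just before the lemma. At a trivalent vertex that is neither a sink nor a source (hence $1$-in-$2$-out or $2$-in-$1$-out), the six admissible local pictures show that the number of solid edges incident to the vertex is either $0$ or $2$, and in the latter case exactly one solid edge enters and exactly one leaves. Therefore, for any state $s$ with $m(s)\neq 0$, the set of solid edges is a disjoint union of simple directed cycles; since the edge carrying $\delta$ is solid by definition, $\delta$ lies on one of these directed cycles. Next I would use the normalization: because $\underline{\Delta}^1(\mathbb{G})=\frac{1}{t^2-t^{-2}}\sum_s m(s)$ and the prefactor $\frac{1}{t^2-t^{-2}}$ is a nonzero element of $B$, the hypothesis $\underline{\Delta}^1(\mathbb{G})\neq 0$ is equivalent to $\sum_s m(s)\neq 0$, and in particular produces at least one state with $m(s)\neq 0$. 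Combined with the previous step, the edge through $\delta$ then lies on a directed cycle.

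The crucial remaining point is that this conclusion may be drawn for an arbitrary edge $e$, not merely the one cut in a fixed diagram. Here I would invoke the topological invariance of Viro's polynomial: $\underline{\Delta}^1(\mathbb{G})$ depends only on the framed graph, not on the diagram nor on the choice of edge cut at $\delta$. For a plane graph, any prescribed edge $e$ can be brought onto the boundary of the unbounded region by an ambient isotopy of $S^2=\mathbb{R}^2\cup\{\infty\}$ that sweeps the point at infinity into one of the two faces bordering $e$ (every edge of a connected plane graph borders a face, and a bridge borders its single face on both sides, so this is always possible). This isotopy extends to $S^3$ and preserves the framed graph, so it leaves $\underline{\Delta}^1(\mathbb{G})$ unchanged, hence nonzero. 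Computing the invariant in the new diagram with $\delta$ on $e$, the previous two steps yield a state whose solid directed cycles contain $e$, so $e$ lies on a directed cycle. As $e$ was arbitrary, every edge of $\mathbb{G}$ lies on a directed cycle, and Lemma~\ref{stronglyconnected} gives that $\mathbb{G}$ is strongly connected.

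I expect the main obstacle to be the third step: rigorously justifying that the nonvanishing is independent of which edge is cut. This rests on the invariance of $\underline{\Delta}^1(\mathbb{G})$ under the choice of $\delta$ (equivalently, under relocating the point at infinity), which I would either cite from Viro \cite{MR2255851} or verify directly by checking that the state sum transforms by a unit of $B$ under a change of the distinguished edge; either way the nonzero property is preserved. The other steps are a direct reading of the vertex Boltzmann-weight constraints and a standard appeal to Lemma~\ref{stronglyconnected}.
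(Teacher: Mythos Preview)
Your proposal is correct and follows essentially the same route as the paper: exploit the vertex Boltzmann-weight constraints to see that the solid edges in any nonzero state form disjoint directed cycles, place $\delta$ on an arbitrary edge (using invariance of $\underline{\Delta}^1$) to conclude that edge lies on such a cycle, and then invoke Lemma~\ref{stronglyconnected}. One small omission: Lemma~\ref{stronglyconnected} has connectedness of $G$ as a standing hypothesis, so you should first observe, as the paper does, that a disconnected plane graph has $\underline{\Delta}^1(\mathbb{G})=0$ and hence $\mathbb{G}$ is already connected under your assumption.
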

\begin{proof}

Since $\mathbb{G}$ is a plane graph, non-connectivity would imply $\underline{\Delta}^1(\mathbb{G}) = 0$, contradicting our assumption. Thus, $\mathbb{G}$ is connected.

For strong connectivity, we show every edge lies in a directed cycle. Let $e$ be an arbitrary edge connecting vertices $v_1$ and $v_2$. Place the point $\delta$ on $e$. Since $\underline{\Delta}^1(\mathbb{G}) \neq 0$, there exists a state $s$ with $m(s) \neq 0$ as in Proposition~\ref{sum}. In this state, the edge $e$ must be assigned $e_1$ (solid). The solid edges in state $s$ form disjoint oriented cycles, one of which contains $e$. This cycle provides a directed path from $v_2$ back to $v_1$, demonstrating that $e$ lies in a directed cycle. As $e$ was arbitrary, $\mathbb{G}$ is strongly connected.

\end{proof}

For a framed MOY graph $(\mathbb{G},c)$ with positive integer coloring $c$, recall from Section 3.4 the ring homomorphism $\Phi_c: \mathbb{Z}H_1(S^3 \setminus \mathbb{G}; \mathbb{Z}) \to \mathbb{Z}[t^{\pm 1}]$ defined by sending the meridian generator $t_e$ to $t^{c(e)}$. Applying this homomorphism to Viro's polynomial yields the single-variable specialization
\begin{equation}
\label{relation2}
\underline{\Delta}^1(\mathbb{G}, c) = \Phi_c(\underline{\Delta}^1(\mathbb{G})).
\end{equation}

\subsection{Equivalence of invariants}

We now establish the equivalence between our normalized Alexander polynomial and Viro's invariant. The connection was first observed in the context of MOY graphs:

\begin{theo}[\cite{MR4001658}]
\label{oldtheo}
For a framed trivalent MOY graph $(\mathbb{G}, c)$, 
we have
\begin{equation}
\label{single}
\Delta_{(\mathbb{G}, c)}(t^{4}) =\displaystyle \frac{\prod_{\text{$v$: even type}} (t^{2c(v)}-t^{-2c(v)}) }{(t^{2}-t^{-2})^{\vert V \vert-1}}\underline{\Delta}^1(\mathbb{G}, c),
\end{equation}
where the product is over vertices of even type (two incoming, one outgoing edge), and $c(v)$ denotes the color of the outgoing edge.

\end{theo}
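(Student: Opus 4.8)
The plan is to exploit that both $\Delta_{(\mathbb{G},c)}(t)$ and $\underline{\Delta}^1(\mathbb{G},c)$ are state-sum invariants computed on a single diagram $\mathbb{D}$ of $\mathbb{G}$, and to reduce the claimed identity to a comparison of the raw state sums together with the vertex contributions. The first observation is that the power $(t^2-t^{-2})^{|V|-1}$ in the denominator of the right-hand side already matches, up to sign, the normalization built into $\Delta_{(\mathbb{G},c)}$: substituting $t\mapsto t^4$ turns the factor $(t^{-1/2}-t^{1/2})^{|V|-1}$ of \cite[Definition 3.7]{MR4090586} into $(t^{-2}-t^2)^{|V|-1}$, which equals $(t^2-t^{-2})^{|V|-1}$ up to the sign $(-1)^{|V|-1}$ (recall that $|V|$ is even for a trivalent graph, as noted in the proof of Proposition~\ref{relation1}). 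After cancelling this common factor, the genuine content of the theorem becomes the identity
$$\langle D,c\rangle\big|_{t\to t^4}\cdot \mathcal{C}(D,c)\big|_{t\to t^4} = \pm\,\frac{\prod_{v:\text{even type}}(t^{2c(v)}-t^{-2c(v)})}{t^2-t^{-2}}\sum_{s} m(s),$$
where $\sum_s m(s)$ is Viro's unnormalized fermionic state sum from Proposition~\ref{sum}.

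To prove this, I would first pass from the MOY graph to the associated link $L_{(D,c)}$ of Fig.~\ref{fig:e24}. On the authors' side the moves of Proposition~\ref{prop1}, together with Equation~\eqref{sumrelation}, reduce $\langle D,c\rangle$ to the Kauffman state sum of the underlying link diagram, which by the argument used in the proof of Proposition~\ref{initial} is a minor of an Alexander matrix. On Viro's side, Proposition~\ref{sum} shows that the nonzero states are precisely those whose solid ($e_1$) edges trace out closed loops, i.e.\ the components of $L_{(D,c)}$, so that $\sum_s m(s)$ is the corresponding $\mathfrak{gl}(1|1)$ Reshetikhin--Turaev determinant. After the variable change $t\mapsto t^4$ dictated by Viro's conventions, the Boltzmann weights of the crossings and of the critical points should agree with the Kauffman contributions $A_{C_p}^{\triangle}$ of Fig.~\ref{fig:e1} and with the curliness factor $\mathcal{C}(D,c)$, so on the link part the two determinants coincide.

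The decisive step, and the expected main obstacle, is the analysis at the trivalent vertices, where the two models genuinely differ. Each even-type vertex fuses two bundles of strands of $L_{(D,c)}$ into a single edge of color $c(v)$, and I expect that summing Viro's vertex Boltzmann weight from \cite[Table 4]{MR2255851} over the admissible $\{e_0,e_1\}$-assignments produces exactly the fusion factor $(t^{2c(v)}-t^{-2c(v)})$, while the odd-type vertices contribute trivially and the single marked edge carrying $\delta$ accounts for the residual $\frac{1}{t^2-t^{-2}}$. Reconciling these local contributions with the Kauffman state sum, while tracking the half-integer powers and the $t\mapsto t^4$ substitution, is the delicate bookkeeping that yields the numerator $\prod_{v:\text{even type}}(t^{2c(v)}-t^{-2c(v)})$. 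Once the local matching at crossings, critical points, and vertices is established, the product of the per-vertex fusion factors with Viro's single overall normalization $\frac{1}{t^2-t^{-2}}$ reproduces precisely the surviving factor in the displayed identity, and the theorem follows.
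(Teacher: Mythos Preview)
The paper does not prove this theorem at all: it is quoted verbatim from the authors' earlier work \cite{MR4001658} (note the bracketed citation in the theorem header), and is used here purely as input for Proposition~\ref{relation1} and the proof of Theorem~\ref{main}. There is therefore no ``paper's own proof'' to compare against; you have written a proof sketch for a result that the present paper simply imports.

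As for the sketch itself, the overall shape---reduce to a comparison of local state-sum contributions after matching normalizations---is the natural one, but there is a genuine slip in the middle. You write that for Viro's invariant the nonzero states are those whose solid edges ``trace out closed loops, i.e.\ the components of $L_{(D,c)}$''. That is not what the paper says (see the paragraph after Proposition~\ref{sum}): the solid edges form disjoint oriented cycles \emph{in the trivalent graph $\mathbb{G}$ itself}, not in the associated cabled link. The construction $L_{(D,c)}$ of Fig.~\ref{fig:e24} is used in this paper only to relate the rotation number to the classical Whitney index (Proposition~\ref{rotrelation}); it plays no role on Viro's side, and there is no reason his fermionic states should enumerate the components of the cabled link. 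Routing the comparison through $L_{(D,c)}$ therefore does not obviously buy you anything, and the claimed identification of $\sum_s m(s)$ with a link determinant is unsupported. A cleaner route---and presumably the one taken in \cite{MR4001658}---is to stay on the trivalent graph throughout and compare the Kauffman-state contributions of Fig.~\ref{fig:e1} directly with Viro's Boltzmann weights at crossings, extrema, and vertices, as is done in Lemma~\ref{plane} for the skein relations.
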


\begin{rem}
On the left hand side of \eqref{single}, the variable $q^{-1}$ in \cite[Theorem~3.2]{MR4001658} is replaced by $t$ due to a variable change between the preprint and published version of \cite{MR4090586}.
\end{rem}

Let $t_1, \ldots, t_k$ be generators of $H_1(S^3\setminus \mathbb{G}; \mathbb{Z})$.
Using the ring homomorphism $\Phi_c$ and Proposition \ref{relation1}, we can reformulate Theorem \ref{oldtheo} as an identity relating the single-variable specializations of the two multi-variable Alexander polynomials:

\begin{prop}
For any positive rational coloring $c$:
\begin{equation} \label{MultitoSingle}
\Phi_c\left(\Delta_{\mathbb{G}}(t_1^4, t_2^4, \ldots, t_k^4)\right)=\Phi_c \left( \prod_{\text{$v$: even type}} (t_v^{2}-t_v^{-2}) \cdot \underline{\Delta}^1(\mathbb{G}) \right),
\end{equation}
where $t_v$ is the oriented meridian of the outgoing edge around $v$. 
\end{prop}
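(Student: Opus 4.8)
The plan is to reduce the claimed identity, after applying $\Phi_c$, to the single-variable identity of Theorem~\ref{oldtheo}, using Proposition~\ref{relation1} and \eqref{relation2} as the two bridges between the multi-variable and single-variable worlds. I would first treat the case of positive \emph{integer} colorings $c$, so that $(\mathbb{G},c)$ is a genuine framed MOY graph and all three cited results apply directly, and then extend to positive rational $c$ by a scaling argument.

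The core of the integer case is a bookkeeping of how the fourth-power substitution $t_i \mapsto t_i^4$ interacts with $\Phi_c$. Since $\Delta_{\mathbb{G}}$ is a Laurent polynomial in the $t_i^{1/2}$ and $\Phi_c$ sends a group element $s$ with $\phi_c(s)=\beta$ to $t^{\beta}$, a monomial $\prod_i t_i^{a_i/2}$ of $\Delta_{\mathbb{G}}$ is carried to $t^{\beta/2}$ with $\beta=\sum_i a_i c(e_i)$, whereas that same monomial after the substitution $t_i\mapsto t_i^4$ is carried to $t^{2\beta}$. This yields the commutation identity
\[
\Phi_c\bigl(\Delta_{\mathbb{G}}(t_1^4,\dots,t_k^4)\bigr)=\Phi_c(\Delta_{\mathbb{G}})\big|_{t\mapsto t^4}.
\]
Applying Proposition~\ref{relation1} and then substituting $t\mapsto t^4$ gives $\Phi_c(\Delta_{\mathbb{G}})\big|_{t\mapsto t^4}=\Delta_{(\mathbb{G},c)}(t^4)\,(t^2-t^{-2})^{|V|-1}$. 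Feeding in Theorem~\ref{oldtheo} cancels the factor $(t^2-t^{-2})^{|V|-1}$ exactly, leaving $\prod_{v\,\text{even}}(t^{2c(v)}-t^{-2c(v)})\cdot\underline{\Delta}^1(\mathbb{G},c)$. I would then rewrite $t^{2c(v)}-t^{-2c(v)}=\Phi_c(t_v^2-t_v^{-2})$, since $t_v$ is the meridian of the outgoing edge and hence $\phi_c(t_v)=c(v)$, and $\underline{\Delta}^1(\mathbb{G},c)=\Phi_c(\underline{\Delta}^1(\mathbb{G}))$ by \eqref{relation2}; because $\Phi_c$ is a ring homomorphism these factors combine into $\Phi_c\bigl(\prod_{v\,\text{even}}(t_v^2-t_v^{-2})\cdot\underline{\Delta}^1(\mathbb{G})\bigr)$, which is the desired right-hand side.

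To pass from integer to positive rational $c$, I would choose $N\in\mathbb{Z}_{>0}$ with $Nc$ integral and positive (which still satisfies the vertex balance condition, so $(\mathbb{G},Nc)$ is an MOY graph), and use the factorization $\Phi_{Nc}(z)=\Phi_c(z)\big|_{t\mapsto t^N}$, valid for every $z$ in the extended group ring since both sides are ring homomorphisms agreeing on generators. The integer case, applied to $Nc$, gives $\Phi_{Nc}(X)=\Phi_{Nc}(Y)$ for $X=\Delta_{\mathbb{G}}(t_1^4,\dots,t_k^4)$ and $Y=\prod_{v\,\text{even}}(t_v^2-t_v^{-2})\,\underline{\Delta}^1(\mathbb{G})$, that is, $\Phi_c(X)\big|_{t\mapsto t^N}=\Phi_c(Y)\big|_{t\mapsto t^N}$; as $t\mapsto t^N$ is an injective substitution on Laurent polynomials in rational powers of $t$, this forces $\Phi_c(X)=\Phi_c(Y)$. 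The only genuinely delicate point I anticipate is the displayed commutation identity: one must check that the formal square-root conventions are respected, so that the symbols $t_i^{1/2}$ in $\Delta_{\mathbb{G}}$ and their $\Phi_c$-images $t^{\beta/2}$ match consistently and the $t\mapsto t^4$ substitution indeed doubles precisely the exponents produced by $\Phi_c$; everything else is a mechanical substitution into the three cited results, with the powers of $(t^2-t^{-2})$ arranged exactly to cancel.
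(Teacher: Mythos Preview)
Your proposal is correct and follows essentially the same route as the paper: reduce to the integer case via Proposition~\ref{relation1}, Theorem~\ref{oldtheo}, and \eqref{relation2}, and then handle rational $c$ by a scaling argument through $Nc$. The paper's proof is a terse two-sentence sketch that leaves the commutation identity $\Phi_c\bigl(\Delta_{\mathbb{G}}(t_1^4,\dots,t_k^4)\bigr)=\Phi_c(\Delta_{\mathbb{G}})\big|_{t\mapsto t^4}$ implicit; you have spelled this out explicitly, and your scaling step invokes injectivity of $t\mapsto t^N$ whereas the paper simply substitutes $t\to t^{1/N}$ in the integer identity for $Nc$, which are equivalent maneuvers.
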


\begin{proof}
While Theorem \ref{oldtheo} establishes this identity for positive integer colorings, we can extend it to all positive rational colorings through a scaling argument. Given a positive rational coloring $c$, there exists a positive integer $N$ such that $Nc$ is an integer coloring. Applying Theorem \ref{oldtheo} to $Nc$ and then making the variable substitution $t \rightarrow t^{1/N}$, we obtain the identity for the rational coloring $c$.
\end{proof}

This result suggests a deeper relationship between these polynomials. Indeed, we can strengthen it to show that the multi-variable Alexander polynomials themselves satisfy a corresponding identity.

\begin{theo}
\label{main}
Let $\mathbb{G}$ be a framed trivalent graph without sinks or sources, and let $t_1, \ldots, t_k$ be generators of $H_1(S^3\setminus \mathbb{G}; \mathbb{Z})$. Then the normalized Alexander polynomial $\Delta_{\mathbb{G}}$ is related to Viro's $U_q(\mathfrak{gl}(1\vert 1))$-Alexander polynomial $\underline{\Delta}^1(\mathbb{G})$ by:
\begin{equation}
\label{maineq}
\Delta_{\mathbb{G}}(t_1^4, t_2^4, \ldots, t_k^4)=\prod_{\text{$v$: even type}} (t_v^{2}-t_v^{-2}) \cdot \underline{\Delta}^1(\mathbb{G}).
\end{equation}
\end{theo}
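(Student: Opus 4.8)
The plan is to upgrade the single-variable identity \eqref{MultitoSingle}, which already holds after applying every specialization homomorphism $\Phi_c$ attached to a positive rational coloring $c$, to the asserted multi-variable identity \eqref{maineq}. The whole argument rests on one separation principle: the family $\{\Phi_c\}$, ranging over positive rational colorings, is jointly injective on the group ring $R:=\mathbb{Z}[H_1(S^3\setminus\mathbb{G};\mathbb{Z})]$. Since $H_1(S^3\setminus\mathbb{G};\mathbb{Z})$ is free abelian, $R$ is a Laurent polynomial ring, hence an integral domain; this lets me clear denominators and argue entirely inside $R$.

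First I would reduce \eqref{maineq} to this separation principle. After the substitution $t_i\mapsto t_i^4$, the formal square roots in $\Delta_{\mathbb{G}}=-\mathrm{Rot}(D)^{1/2}\langle D\rangle$ become honest integer powers, so both sides of \eqref{maineq} lie in the fraction field $F$ of $R$. I choose a common denominator $d\in R\setminus\{0\}$ clearing both sides and set $P:=d\cdot\mathrm{LHS}-d\cdot\mathrm{RHS}\in R$. Because each $\Phi_c$ is a ring homomorphism, \eqref{MultitoSingle} gives $\Phi_c(P)=\Phi_c(d)\bigl(\Phi_c(\mathrm{LHS})-\Phi_c(\mathrm{RHS})\bigr)=0$ for every positive rational coloring $c$. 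Granting the separation principle, $P=0$, and cancelling $d\neq 0$ in the domain $R$ yields \eqref{maineq}.

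Next I would establish the separation principle when $\mathbb{G}$ is strongly connected. A rational coloring is exactly an element $\phi_c\in\mathrm{Hom}(H_1(S^3\setminus\mathbb{G};\mathbb{Z}),\mathbb{Q})$, and $\Phi_c$ sends a group element $\lambda$ to $t^{\phi_c(\lambda)}$. By Lemma~\ref{stronglyconnected} the positive colorings form a nonempty cone $C$ cut out by the strict inequalities $\phi_c(t_e)>0$; being relatively open and nonempty, $C$ linearly spans $\mathrm{Hom}(H_1(S^3\setminus\mathbb{G};\mathbb{Z}),\mathbb{Q})$ (if $c_0\in C$ and $d$ is arbitrary, then $c_0+\varepsilon d\in C$ for small rational $\varepsilon>0$). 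Now take $0\neq P=\sum_i n_i\lambda_i\in R$ with the $\lambda_i$ distinct. For each pair $i\neq j$ the locus $\{\phi:\phi(\lambda_i-\lambda_j)=0\}$ is a proper hyperplane, since $\lambda_i-\lambda_j\neq 0$ and $C$ spans; a nonempty open cone is not covered by finitely many proper hyperplanes, so I pick $c\in C$ off all of them. For this $c$ the exponents $\phi_c(\lambda_i)$ are pairwise distinct rationals, distinct powers of $t$ are linearly independent, and hence $\Phi_c(P)=\sum_i n_i t^{\phi_c(\lambda_i)}\neq 0$, proving joint injectivity.

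Finally I would dispose of the degenerate case where $\mathbb{G}$ is not strongly connected, in which \eqref{MultitoSingle} is vacuous because no positive coloring exists; here I claim both sides of \eqref{maineq} vanish. Choosing an edge $e$ lying in no directed cycle (such an edge exists by Lemma~\ref{stronglyconnected}) and placing the base point $\delta$ on $e$, the plane-graph analyses of Lemma~\ref{positive} and Lemma~\ref{plane1} show that no Kauffman state survives, and no nonzero Viro state survives either (its solid edges would have to form disjoint directed cycles through $e$), so $\langle D\rangle=0$ and $\underline{\Delta}^1(\mathbb{G})=0$; a general spatial diagram is reduced to this situation through the base-point independence of Proposition~\ref{initial}, the topological invariance of $\underline{\Delta}^1$, and the crossing-resolving relations of Lemma~\ref{skein}. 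I expect this last case—making the vanishing of both invariants rigorous for arbitrary spatial diagrams rather than merely plane ones—to be the main technical obstacle, whereas the core separation argument is elementary once the openness and spanning property of the positive cone $C$ is observed.
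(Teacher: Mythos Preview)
Your separation principle for strongly connected $\mathbb{G}$ is correct and is a genuine alternative to the paper's method. The paper proves the same implication by evaluating numerically: it sets $t_i=2^{a_i}$ for $(a_1,\dots,a_k)$ in an open box of positive rational colorings, obtains $f(2^{a_1},\dots,2^{a_k})=0$, and then invokes Lemma~\ref{Lang} (a polynomial vanishing on a product of infinite sets is zero). Your hyperplane-avoidance argument stays inside the group ring, produces a single coloring $c$ with pairwise distinct exponents, and uses linear independence of distinct powers of $t$; this is cleaner and avoids the passage through real evaluations.

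The gap is in your degenerate case. For a not-strongly-connected \emph{spatial} $\mathbb{G}$ you cannot conclude that $\langle D\rangle=0$ and $\underline{\Delta}^1(\mathbb{G})=0$ from the tools you cite. Lemma~\ref{skein} resolves a crossing into two diagrams with \emph{different} underlying abstract graphs, and those resolutions may well be strongly connected, so the recursion does not terminate in graphs where the plane-graph vanishing Lemmas~\ref{positive} and~\ref{plane1} apply; moreover Lemma~\ref{skein} concerns only $\langle D\rangle$, not $\underline{\Delta}^1$, and a knotted spatial graph has no crossingless diagram to which those lemmas apply directly. The paper sidesteps this entirely by a different decomposition: it first invokes Lemma~\ref{plane}, which shows that \emph{both} sides of \eqref{maineq} satisfy the \emph{same} skein relations, so the difference $\mathrm{LHS}-\mathrm{RHS}$ is determined by its values on plane graphs. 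Only then does it split plane graphs into strongly connected (handled by the separation argument) and not strongly connected (handled by Lemmas~\ref{positive} and~\ref{plane1}). If you insert Lemma~\ref{plane} in place of Lemma~\ref{skein} for your reduction to plane graphs, your proof goes through; without it, the not-strongly-connected spatial case remains open.
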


The extension from the specialization identity to the multi-variable identity follows a fundamental principle in algebraic geometry: if two polynomials agree on a sufficiently dense set of points, they must be identical. In our case, the identity holds for all positive rational colorings, which form a dense subset in the space of all possible evaluations. For a rigorous proof, we can rely on the following classical result about polynomials:

\begin{lemma}[Corollary IV.1.6 in \cite{Lang}]
\label{Lang}
Let $\mathbb{F}$ be a field, and let $S_1, \cdots, S_n$ be infinite subsets of $\mathbb{F}$.  Let $f(x_1, \cdots, x_n)$ be a polynomial in $n$ variables over $\mathbb{F}$.  If $f(a_1, \cdots, a_n)=0$ for all choices $a_i\in S_i$ ($i=1, \cdots, n$), then $f=0$.
\end{lemma}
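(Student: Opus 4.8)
The plan is to prove Theorem~\ref{main} by promoting the single-variable specialization identity \eqref{MultitoSingle}, already known for all positive rational colorings, to the full multi-variable identity \eqref{maineq} using the polynomial vanishing principle of Lemma~\ref{Lang}. The essential idea is that the two sides of \eqref{maineq} are Laurent polynomials in the formal half-powers $t_1^{1/2},\dots,t_k^{1/2}$, and their difference vanishes under every specialization $\Phi_c$ coming from a positive rational coloring; since these specializations correspond to evaluating the variables along a Zariski-dense family of points, the difference must be identically zero.

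First I would set up the algebraic framework carefully. Both $\Delta_{\mathbb{G}}(t_1^4,\dots,t_k^4)$ and $\prod_{v}(t_v^2-t_v^{-2})\cdot\underline{\Delta}^1(\mathbb{G})$ live in the Laurent polynomial ring $\mathbb{F}[u_1^{\pm 1},\dots,u_k^{\pm 1}]$, where $u_j=t_j^{1/2}$ and $\mathbb{F}=\mathbb{Q}$ (or its fraction field). Let $P(u_1,\dots,u_k)$ denote the difference of the two sides, cleared of negative powers by multiplying through by a suitable monomial $\prod_j u_j^{N_j}$ so that it becomes an honest polynomial; this does not affect the vanishing locus among the nonzero points we care about. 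Next I would translate a positive rational coloring $c$ into a point of evaluation: the homomorphism $\Phi_c$ sends each generator $t_j$ to $t^{c(e_j)}$, hence sends $u_j=t_j^{1/2}$ to $t^{c(e_j)/2}$. Thus evaluating $P$ under $\Phi_c$ amounts to substituting $u_j\mapsto t^{c(e_j)/2}$ and regarding the result as a one-variable Laurent polynomial in $t$, which by \eqref{MultitoSingle} is identically zero in $t$.

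The key step is to extract, from the vanishing of all these one-variable specializations, the vanishing of $P$ at a genuinely $k$-dimensional dense set of points, so that Lemma~\ref{Lang} applies. For a fixed coloring $c=(c_1,\dots,c_k)$ of positive rationals, the specialized polynomial vanishes for every value of $t\in\mathbb{R}_{>0}$ (indeed for all $t$ in an infinite set), which means $P(t^{c_1/2},\dots,t^{c_k/2})=0$ for all such $t$. Letting $c$ range over all positive rational tuples and $t$ over an infinite set, the points $(t^{c_1/2},\dots,t^{c_k/2})$ sweep out a set whose coordinates each range over an infinite subset $S_j\subset\mathbb{F}$; more precisely, fixing generic independent choices one shows that for each coordinate the achievable values form an infinite set, and that these can be chosen independently across the $k$ coordinates by varying the $c_j$ and $t$ appropriately. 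With infinite sets $S_1,\dots,S_k$ realized in this way, Lemma~\ref{Lang} forces $P\equiv 0$, which is exactly \eqref{maineq}.

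The main obstacle I anticipate is the independence of coordinates: a naive parametrization $u_j=t^{c_j/2}$ ties all coordinates to a single variable $t$, so a priori one only controls a one-parameter curve rather than a full $k$-dimensional box $S_1\times\cdots\times S_k$. I would resolve this by observing that the coloring vector $c$ is itself free to vary over all positive rationals subject only to the balance conditions at vertices; since $H_1(S^3\setminus\mathbb{G};\mathbb{Z})$ has rank $k$ with the $t_j$ as generators, the admissible colorings form a full-rank lattice of rational points, so by combining different colorings with different base values of $t$ one can realize, for each $j$ separately, an infinite set of independent values $u_j\in S_j$. Making this genuine independence precise—perhaps by first fixing $k-1$ coordinates and using a one-variable vanishing argument in the remaining coordinate, then inducting—is the technical heart of the argument; once the box $S_1\times\cdots\times S_k$ is in hand, the conclusion is immediate from Lemma~\ref{Lang}.
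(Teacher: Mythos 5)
You have not proved the statement at all. The statement assigned is Lemma~\ref{Lang} itself---the multivariate vanishing principle---but your proposal takes that lemma as a black box and sketches instead how to \emph{use} it to deduce Theorem~\ref{main}. That is (a rough version of) the paper's proof of Theorem~\ref{main}, not a proof of Lemma~\ref{Lang}; as an argument for the lemma it is circular, since the lemma is invoked as the final step of your own plan. In the paper the lemma is simply quoted from Lang's \emph{Algebra} (Corollary IV.1.6); what a self-contained proof requires is elementary and entirely different from anything in your proposal: induction on the number of variables $n$. For $n=1$, a nonzero polynomial $f\in\mathbb{F}[x]$ of degree $d$ has at most $d$ roots in the field $\mathbb{F}$, so vanishing on the infinite set $S_1$ forces $f=0$. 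For $n>1$, write $f=\sum_{i=0}^{d} g_i(x_1,\dots,x_{n-1})\,x_n^{i}$; for each fixed $(a_1,\dots,a_{n-1})\in S_1\times\cdots\times S_{n-1}$ the one-variable polynomial $f(a_1,\dots,a_{n-1},x_n)$ vanishes on the infinite set $S_n$, hence is identically zero, so $g_i(a_1,\dots,a_{n-1})=0$ for every $i$; by the induction hypothesis each $g_i=0$, and therefore $f=0$. None of your machinery---colorings $c$, the homomorphisms $\Phi_c$, Zariski density---plays any role here.

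Even judged as a sketch of the argument for Theorem~\ref{main} (which is not what was asked), your self-identified obstacle remains unresolved: you parametrize evaluation points as $(t^{c_1/2},\dots,t^{c_k/2})$ and then worry, correctly, that this only sweeps out one-parameter curves rather than a product set $S_1\times\cdots\times S_k$, and your proposed fix is left at the level of ``perhaps by fixing $k-1$ coordinates and inducting.'' The paper's proof avoids the issue cleanly: it fixes the base $t=2$ once and for all, takes open neighborhoods $U_i$ of the values $c_0(e_i)$ of a positive integer coloring (positivity being finitely many open linear conditions, any rational tuple in $U_1\times\cdots\times U_k$ extends to a positive rational coloring), and sets $S_i=\{2^{a_i}\mid a_i\in U_i\cap\mathbb{Q}\}$. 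Then for every point of $S_1\times\cdots\times S_k$ there is a coloring realizing it, equation~\eqref{MultitoSingle} gives $f(2^{a_1},\dots,2^{a_k})=0$, and Lemma~\ref{Lang} applies directly to the genuine product of infinite sets. So the coordinates are independent by construction, with no induction over coordinates needed.
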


\begin{lemma} 
\label{plane}
The invariants on both sides of (\ref{maineq}) satisfy the following relations.
\begin{align}
&\left(\begin{tikzpicture}[baseline=-0.65ex, thick, scale=0.5]
\draw (1,-1)   -- (0.2,-0.2);
\draw (-1, -1) [->] -- (1, 1) node[above]{$t$};
\draw (-0.2,0.2) [->] to (-1,1)  node[above]{$s$};
\end{tikzpicture} \right)
  =\frac{-t^{-2}s^{-2}}{\{2\}_t\{2\}_s}\cdot
\left(
\begin{tikzpicture}[baseline=-0.65ex, thick, scale=0.5]
\draw (0,-2) node[below]{$t$} [->-] to (0, 0);
\draw (0,0) --  (0, 1);
\draw (0, 1) [->] to (0,2) node[above]{$s$};
\draw (0,1) node[circle,fill,inner sep=1pt]{};
\draw (2,0) node[circle,fill,inner sep=1pt]{};
\draw (2,-2) [->-] node[below]{$s$} to (2,0);
\draw (2,0) [->] to (2,2) node[above]{$t$};
\draw (2,-0) [->-] to (0, 1);
\draw (1, -0.3) node {$st^{-1}$};
\end{tikzpicture}\right)
+\, \frac{s^{-2}}{\{2\}_t\{2\}_{ts}}\cdot
\left (\begin{tikzpicture}[baseline=-0.65ex, thick]
\draw (0,-1) [->-] to  (0.5,-0.33);
\draw (0.5, -0.33) [->-] to  (0.5,0.33);
\draw (0.5, 0.33) [->] to  (0,1);
\draw (1,-1) [->-] to (0.5,-0.33);
\draw (1,1) [<-] to  (0.5,0.33);
\draw (0, -1.25) node {$t$};
\draw (0.9,-1.25) node {$s$};
\draw (0,1.25) node {$s$};
\draw (1,1.25) node {$t$};
\draw (1,0) node {$ts$};
\draw (0.5,0.33) node[circle,fill,inner sep=1pt]{};
\draw (0.5,-0.33) node[circle,fill,inner sep=1pt]{};
\end{tikzpicture}\right),\\
&\left(
\begin{tikzpicture}[baseline=-0.65ex, thick, scale=0.5]
\draw (1,-1) [->]  -- (-1,1)  node[above]{$s$};
\draw (-1, -1) to (-0.2, -0.2);
\draw (0.2, 0.2) [->] -- (1, 1) node[above]{$t$};
\end{tikzpicture}\right)
  =\frac{-t^{2}s^{2}}{\{2\}_t\{2\}_s}\cdot
\left(
\begin{tikzpicture}[baseline=-0.65ex, thick, scale=0.5]
\draw (0,-2) node[below]{$t$} [->-] to (0, 0);
\draw (0,0) --  (0, 1);
\draw (0, 1) [->] to (0,2) node[above]{$s$};
\draw (0,1) node[circle,fill,inner sep=1pt]{};
\draw (2,0) node[circle,fill,inner sep=1pt]{};
\draw (2,-2) [->-] node[below]{$s$} to (2,0);
\draw (2,0) [->] to (2,2) node[above]{$t$};
\draw (2,-0) [->-] to (0, 1);
\draw (1, -0.3) node {$st^{-1}$};
\end{tikzpicture}\right)
+ \, \frac{s^{2}}{\{2\}_t\{2\}_{ts}}\cdot
\left (\begin{tikzpicture}[baseline=-0.65ex, thick]
\draw (0,-1) [->-] to  (0.5,-0.33);
\draw (0.5, -0.33) [->-] to  (0.5,0.33);
\draw (0.5, 0.33) [->] to  (0,1);
\draw (1,-1) [->-] to (0.5,-0.33);
\draw (1,1) [<-] to  (0.5,0.33);
\draw (0, -1.25) node {$t$};
\draw (0.9,-1.25) node {$s$};
\draw (0,1.25) node {$s$};
\draw (1,1.25) node {$t$};
\draw (1,0) node {$ts$};
\draw (0.5,0.33) node[circle,fill,inner sep=1pt]{};
\draw (0.5,-0.33) node[circle,fill,inner sep=1pt]{};
\end{tikzpicture}\right),
\end{align}
where $\{2\}_u:=u^2-u^{-2}$ for $u\in H_1(S^3\setminus \mathbb{G}; \mathbb{Z})$.
\end{lemma}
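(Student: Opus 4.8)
The plan is to verify that each side of \eqref{maineq}, regarded as a rule assigning an element of the relevant ring to a diagram, obeys the two displayed skein relations. Since the negative-crossing relation is governed by the same bookkeeping as the positive one (with over/under strands and the signs of the exponents reversed), I would carry out the positive crossing in detail and indicate the sign changes for the other. Write $D_\times$ for a diagram containing the positive crossing, and $D_1$, $D_2$ for the diagrams in which that crossing is replaced by the ``rung'' resolution (middle edge $st^{-1}$) and the ``bigon'' resolution (middle edge $ts$) on the right-hand sides; all three agree outside a small disk.

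For the left-hand side $\Delta_{\mathbb{G}}(t_1^4,\dots,t_k^4)$, I would start from the skein relation of Lemma~\ref{skein} for the state sum $\langle\,\cdot\,\rangle$ and push it through the normalization $\Delta_{\mathbb{D}}=-\mathrm{Rot}(D)^{1/2}\langle D\rangle$ of Definition~\ref{def:normalizedAlexanderpolynomial}. Substituting $\langle D_i\rangle=-\mathrm{Rot}(D_i)^{-1/2}\Delta_{D_i}$ into Lemma~\ref{skein} reproduces the same relation for $\Delta$, but with each coefficient multiplied by $(\mathrm{Rot}(D_\times)/\mathrm{Rot}(D_i))^{1/2}$; the two overall minus signs cancel. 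The crucial step is to show these ratios are trivial. Because $D_\times$, $D_1$, $D_2$ differ only inside a disk and share the same boundary data, the winding numbers that Definition~\ref{rotgraph} assigns to the surrounding regions coincide, so the ratio collapses to the local vertex contributions. Denoting the merge/split vertices of $D_2$ by $M,N_v$ and those of $D_1$ by $P,Q$, a direct computation gives $\chi(\text{crossing})=\chi(M)\chi(N_v)=\chi(P)\chi(Q)$, each equal to $p\,(ts)^{1/2}$ where $p$ is the winding number of the common west region. Hence $\mathrm{Rot}(D_\times)=\mathrm{Rot}(D_1)=\mathrm{Rot}(D_2)$, so $\Delta$ satisfies Lemma~\ref{skein} verbatim; applying $t_i\mapsto t_i^4$ sends $\{\tfrac12\}_u\mapsto\{2\}_u$ and $u^{\pm1/2}\mapsto u^{\pm2}$, turning those coefficients into exactly the ones stated.

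For the right-hand side $\prod_{v:\text{ even}}(t_v^2-t_v^{-2})\cdot\underline{\Delta}^1(\mathbb{G})$, I would combine two ingredients. First, the R-matrix decomposition of a crossing in Viro's formalism, i.e. assembling the Boltzmann weights of Tables~3--4 of \cite{MR2255851} through Proposition~\ref{sum}, yields a skein relation of the form $\underline{\Delta}^1(D_\times)=\tfrac{-t^{-2}s^{-2}}{\{2\}_t}\,\underline{\Delta}^1(D_1)+\tfrac{s^{-2}}{\{2\}_t}\,\underline{\Delta}^1(D_2)$. Second, I would track the even-vertex product: passing to $D_1$ creates one even-type vertex (the merge $P$) whose outgoing meridian is $s$, contributing a factor $\{2\}_s$, while $D_2$ creates one even-type vertex (the merge $M$) with outgoing meridian $ts$, contributing $\{2\}_{ts}$; the crossing itself contributes none, and the split vertices $Q,N_v$ are odd and contribute nothing. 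Multiplying Viro's relation by the common outside product and inserting the factors $\{2\}_s$ and $\{2\}_{ts}$ reproduces precisely the coefficients $\tfrac{-t^{-2}s^{-2}}{\{2\}_t\{2\}_s}$ and $\tfrac{s^{-2}}{\{2\}_t\{2\}_{ts}}$ of the stated relations, so the two sides obey the same skein rule.

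The main obstacle is the right-hand side: extracting the decomposition $\underline{\Delta}^1(D_\times)=\tfrac{-t^{-2}s^{-2}}{\{2\}_t}\underline{\Delta}^1(D_1)+\tfrac{s^{-2}}{\{2\}_t}\underline{\Delta}^1(D_2)$ from Viro's $U_q(\mathfrak{gl}(1|1))$ R-matrix requires a convention-sensitive computation with the Boltzmann weights and with the global factor $1/(t^2-t^{-2})$ of Proposition~\ref{sum}, and the asymmetry of its coefficients---a single $\{2\}_t$ in each denominator rather than a symmetric product---must be matched exactly against the even-vertex factors $\{2\}_s$ and $\{2\}_{ts}$ that arise from resolving the crossing. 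By contrast the left-hand side is essentially bookkeeping once the rotation-number identity $\mathrm{Rot}(D_\times)=\mathrm{Rot}(D_1)=\mathrm{Rot}(D_2)$ is established.
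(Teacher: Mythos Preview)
Your proposal is correct and follows essentially the same approach as the paper: verify the skein relations separately for each side of \eqref{maineq}, the left via the Kauffman state sum and the right via Viro's Boltzmann weights. For the left-hand side you route the argument through Lemma~\ref{skein} together with the rotation-number identity $\mathrm{Rot}(D_\times)=\mathrm{Rot}(D_1)=\mathrm{Rot}(D_2)$ (which is exactly Proposition~\ref{prop1}(vi)--(vii)), whereas the paper simply cites the direct state comparison of \cite[Theorem~4.1(iv)]{MR4090586}; these are the same computation packaged differently, and your modular version makes the role of the normalization factor explicit. One small slip: with $p=\chi(W)$ the common value of the local vertex products is $p\,(ts)^{-1/2}$, not $p\,(ts)^{1/2}$, but since only the equality of the three values matters this does not affect the argument. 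For the right-hand side your decomposition of the argument into (i) an R-matrix skein relation for $\underline{\Delta}^1$ and (ii) bookkeeping of the even-vertex factors $\{2\}_s$, $\{2\}_{ts}$ is exactly what the paper's citation of \cite[Theorem~2.4(viii)]{MR4001658} encodes; as you note, the actual Boltzmann-weight check is the substantive step in both treatments.
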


\begin{proof}

The verification of the skein relations for our multi-variable Alexander polynomial $\Delta_\mathbb{G}(t_1^4, t_2^4, \ldots, t_k^4)$ follows the same pattern as the proof of \cite[Theorem 4.1(iv)]{MR4090586}, comparing the Kauffman states and their state sums on both sides of each relation. 
For Viro's invariant, this can be established by a straightforward application of the Boltzmann weights listed in Tables 3–4 of \cite{MR2255851}. The argument follows the same idea as the proof of \cite[Theorem 2.4 (viii)]{MR4001658}, where a similar relation was established for the single-variable specialization $\underline{\Delta}^1(\mathbb{G}, c)$.

\end{proof}

\begin{proof}[Proof of Theorem \ref{main}]
By Lemma \ref{plane}, it suffices to prove the equation for plane graphs $\mathbb{G}$. If $\mathbb{G}$ is not strongly-connected, Lemmas \ref{positive} and \ref{plane1} show both $\Delta_{\mathbb{G}}$ and $\underline{\Delta}^1(\mathbb{G})$ are zero, hence equal. Now we consider the case that $\mathbb{G}$ is strongly-connected. 

Let $k$ be the rank of $H_1(S^3 \setminus \mathbb{G}; \mathbb{Z})$, and assume $t_{1}, \cdots, t_{k}$ form a generating set, where $t_{i}$ is the oriented meridian of an edge $e_i$ for $1\leq i \leq k$. Each meridian $t_e$ of an edge $e$ can be expressed as a monomial in $t_{1}, \cdots, t_{k}$, so its color $c(e)$ is a linear combination of $c(e_1), \cdots, c(e_k)$ under a coloring $c$. 

Since $\mathbb{G}$ is strongly-connected, by Lemma \ref{initial}, there exists a positive integer coloring $c_0$. Since positivity is determined by finitely many strict linear inequalities $c(e) > 0$, there is an open neighborhood $U_i$ of $c_0(e_i)$ for $1\leq i \leq k$ 
such that every rational coloring $c$ with $(c(e_1), \cdots, c(e_k)) \in U_1\times \cdots U_k$ has positive rational colors on all edges of $\mathbb{G}$.

Define the polynomial $$f(t_{1}, \cdots, t_{k}):=\Delta_{\mathbb{G}}(t_1^4, t_2^4, \ldots, t_k^4)-\prod_{\text{$v$: even type}} (t_v^{2}-t_v^{-2}) \cdot \underline{\Delta}^1(\mathbb{G})$$ 
where each $t_e$ is expressed in terms of the basis elements $t_{1}, \ldots, t_{k}$. 

Let $S_i:=\{2^{a_i} \,|\, a_i\in U_i\cap \mathbb{Q}\}$ for $1\leq i \leq k$.  For any coloring $c$ with $c(e_i) = a_i$, equation (\ref{MultitoSingle}) gives
$$f(2^{a_1}, \cdots, 2^{a_k})=[\Phi_c(f(t_{1}, \cdots, t_{k}) ]_{t=2}=0.$$
Since each $S_i$ is an infinite subset of $\mathbb{R}$, applying Lemma \ref{Lang} with $\mathbb{F} = \mathbb{R}$ shows that $f = 0$, proving the theorem.

\end{proof}

\bibliographystyle{siam}
\bibliography{MultAlex}

\end{document}